\newtheorem{theorem}[equation]{Theorem}
\newtheorem{proposition}[equation]{Proposition}
\newtheorem{lemma}[equation]{Lemma}
\newtheorem{corollary}[equation]{Corollary}
\theoremstyle{definition}
\newtheorem{definition}[equation]{Definition}
\newtheorem{example}[equation]{Example}
\newtheorem{remark}[equation]{Remark}
\newtheorem{remarks}[equation]{Remarks}
\newcommand{\FFF}{\mathbb F}
\newcommand{\ZZZ}{\mathbb Z}
\newcommand{\KK}{\mathbb K}
\newcommand{\DHO}{{\rm DHO }}
\newcommand{\LLL}{{\mathscr L}}
    \font\Aaa=msam10
\def\qed{\hbox{~~\Aaa\char'003}}
\def\col{\colon\!}
\numberwithin{equation}{section}
\begin{document}

\author{Ulrich Dempwolff and William M. Kantor}

\title{Orthogonal  Dual Hyperovals, Symplectic Spreads and Orthogonal Spreads}

\date{}
\maketitle

\begin{abstract} 
Orthogonal spreads in
orthogonal spaces of type $V^+(2n+2,2)$ produce large numbers of 
rank $n$  dual hyperovals in orthogonal spaces
of type $V^+(2n,2)$. 
The construction resembles the method for obtaining  symplectic spreads in $V(2n,q)$  from
orthogonal spreads in
 $V^+(2n+2,q)$ when $q$ is  even.
\end{abstract}

\section{Introduction}
%%%%%%%%%%%%%%%%%%%%%%%%%%%%%%%%%%%%%%%%%%%%%

A set ${\bf D}$ of $n$-dimensional subspaces 
spanning a finite
 $\FFF_q$-vector space  $V$    is called a 
\emph{dual hyperoval} (DHO)  {\em  of rank $n>2$},  if $| {\bf D} | =(q^n-1)/(q-1)+1$, $\dim X_1\cap X_2=1$ and 
$X_1\cap X_2 \cap X_3=0$ for every three different $X_1,X_2,X_3\in {\bf D}$. 
Usually DHOs are viewed 
projectively and called 
 ``dimensional dual hyperovals", but the vector space point of view seems better for our purposes.
See the survey article~\cite{Yo}
for many of the known DHOs, all of which  occur in vector spaces of  characteristic 2
and mostly are   over $\FFF _2$, 
in which case 
 $|{\bf D}|=2^n$.

Our purpose is to show that the number of rank $n$ 
\emph{orthogonal}
DHOs 
is not bounded above
by any polynomial in $2^n$;
these DHOs  occur in orthogonal  spaces $V^+ (2n,2)$
 and  all
members  are totally singular.
Our DHOs  will  
have a further property: they
\emph{split over a totally singular space $Y$}, meaning that
 $V=X\oplus Y$ for each DHO member $X $.
For more concerning the  number  of
inequivalent DHOs of rank $n$, see  Section~\ref{Concluding  remarks}(b).

Our  source for such orthogonal   DHOs 
in $V^+ (2n,2)$ is \emph{orthogonal spreads}
in $V^+(2n+2,2)$:  sets ${\bf O}$   of 
totally singular $n+1$-spaces such that each nonzero singular vector is in exactly one of them. Such orthogonal spreads exist if and only if $n$ is odd.  We use these for the following elementary result that is the basis for this paper: 

\begin{theorem} 
\label{ProjectionDHO}
Let ${\bf O}$ be an orthogonal spread in  $V^+(2n+2,2)$.
Let $P$ be a  point of $Y\in {\bf O},$ so that 
$V :=P^\perp /P\simeq V^+ (2n,2)$.
Then  
$$
{\bf O}/P:=\big \{ 
 \langle X\cap P ^\perp ,P  \rangle/P  \,| \,  X\in {\bf O}-\{Y\}\big\}
$$
is an orthogonal {\rm DHO} in $V$ that splits over $Y/P$.

\end{theorem}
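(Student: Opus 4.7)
The plan is to verify each defining property of an orthogonal DHO, plus the splitting assertion. First I would record several elementary consequences of $\mathbf{O}$ being a spread: distinct members of $\mathbf{O}$ meet trivially (any common nonzero vector would be singular and hence partitioned by $\mathbf{O}$), so in particular $X\cap Y=0$ and $X\cap P=0$ for every $X\in\mathbf{O}-\{Y\}$, and $X\not\subset P^\perp$ (otherwise $P\subset X^\perp=X$). Consequently $A_X:=X\cap P^\perp$ has dimension $n$ and is disjoint from $P$, so $\overline{X}:=\langle A_X,P\rangle/P$ is an $n$-dimensional subspace of $V$. Since $A_X$ is totally singular, $P$ is singular, and $A_X\perp P$, the identity $Q(a+p)=Q(a)+Q(p)+B(a,p)$ shows $\langle A_X,P\rangle$ is totally singular, whence $\overline{X}$ is a totally singular $n$-space of $V$. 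A count of singular vectors yields $|\mathbf{O}|=2^n+1$, so $|\mathbf{O}/P|=2^n$, the correct DHO cardinality.

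The main step is the pairwise-intersection condition. For distinct $X_1,X_2\in\mathbf{O}-\{Y\}$ set $A_i:=X_i\cap P^\perp$. Since $X_1\cap X_2=0$ we have $\dim(A_1+A_2)=2n$, of codimension $1$ in $P^\perp$, and a brief dimension computation gives
\[
\dim\bigl(\overline{X_1}\cap\overline{X_2}\bigr)=\begin{cases} 1 & \text{if } P\subset A_1+A_2,\\ 0 & \text{otherwise}.\end{cases}
\]
Everything therefore hinges on showing $P\subset A_1+A_2$. Since $X_1\cap X_2=0$ and $\dim X_1+\dim X_2=2n+2$, the space $X_1+X_2$ is all of $V^+(2n+2,2)$, so the nonzero $p\in P$ decomposes uniquely as $p=x_1+x_2$ with $x_i\in X_i$ (both nonzero since $X_i\cap Y=0$). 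From $Q(p)=Q(x_1)=Q(x_2)=0$ in characteristic $2$ one obtains $B(x_1,x_2)=0$, and then $B(x_1,p)=B(x_1,x_1+x_2)=0$, placing $x_1$ in $A_1$; likewise $x_2\in A_2$. Hence $p\in A_1+A_2$. This is the geometric heart of the argument and the only place the orthogonal structure is used in an essential way.

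The remaining conditions I expect to follow directly. The unique nonzero vector of $\overline{X_1}\cap\overline{X_2}$ is represented by $x_1\in A_1$ (with $p=x_1+x_2$, $x_2\in A_2$); if this vector also lies in $\overline{X_3}$ for a third distinct $X_3\in\mathbf{O}-\{Y\}$, then $x_1\equiv x_3\pmod P$ for some $x_3\in A_3$, forcing $x_3\in\{x_1,x_1+p\}=\{x_1,x_2\}$ and contradicting $X_1\cap X_3=X_2\cap X_3=0$. For the splitting, I would note $\langle A_X,P\rangle\cap Y=P$, because $a+q\in Y$ with $a\in A_X$ and $q\in P\subset Y$ forces $a\in X\cap Y=0$; thus $\overline{X}\cap(Y/P)=0$, and since both summands have dimension $n$ in $V$, we get $V=\overline{X}\oplus(Y/P)$. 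I anticipate the pairwise intersection---specifically the proof that $P\subset A_1+A_2$---to be the main obstacle, since everything else is a direct consequence of the spread property and the definitions.
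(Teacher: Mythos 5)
Your proposal is correct and follows essentially the same route as the paper: both decompose the generator $p$ of $P$ as $x_1+x_2$ with $x_i\in X_i$ and use the vanishing of $Q$ on $x_1,x_2,p$ to conclude $x_i\in X_i\cap P^\perp$ (the paper phrases this as the $2$-space $\{0,p,x_1,x_2\}$ being totally singular), which is exactly your claim that $P\subset A_1+A_2$; the three-fold intersection and splitting arguments also match. Your packaging via the dimension dichotomy for $\overline{X_1}\cap\overline{X_2}$ is a minor cosmetic difference, not a different method.
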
 

Although we will show that many orthogonal DHOs
can be obtained from orthogonal spreads with
the help of Theorem~\ref{ProjectionDHO}, there
are orthogonal DHOs   that   cannot be obtained by this method
(see   Section~\ref{Concluding  remarks}(a)).

Except in Section~\ref{qDHOs}, 
$q$ will always denote a power of 2
and almost always $n$ will be odd.
Our construction involves
 the close connection between
orthogonal spreads in $V^+(2n+2,q)$ and symplectic spreads in
$V(2n,q)$. Recall that a spread of $n$-spaces in $V=V(2n,q)$ is 
a set of $q^n+1$ subspaces such that each nonzero vector is in exactly one of them; this determines an affine plane \cite[p.~133]{Demb}.
A spread is called \emph{symplectic}
if there is a nondegenerate alternating bilinear form on  $V$ 
such that all members of the spread are totally  isotropic. 
Any symplectic spread 
in $V(2n,q)$ can be lifted to an essentially unique orthogonal spread
in $V^+(2n+2,q)$; conversely,   any orthogonal spread
in $V^+(2n+2,q)$ can be projected (in many ways, corresponding to arbitrary nonsingular points) in order to obtain symplectic spreads   \cite[Sec.~3]{Ka1}, \cite[Thm.~2.13]{KW}
(cf. Definition~\ref{Orthogonal and symplectic spreads} below).
Theorem~\ref {ProjectionDHO}  produces   many DHOs.  There is at present no determination of the number of inequivalent orthogonal spreads, and 
the same is true for DHOs.  
%Nevertheless, we will see that there are many inequivalent DHOs obtained using Theorem~\ref {ProjectionDHO}.

%\bigskip

There is a simplified (and restricted) version 
of this process  that  does not take a detour using   orthogonal spreads of higher-dimensional spaces.
Given a symplectic spread ${\bf S}$ and distinct $X,Y\in {\bf S}$ it is standard to introduce 
``coordinates'':  a {\em spread-set}   $\Sigma$  for  ${\bf S}$
(this is  a set
of self-adjoint linear operators). These coordinates can be distorted in a unique
way to a set  $\Delta _\Sigma$ of coordinates  of an orthogonal DHO
(this is  a set
of skew-symmetric operators; see Theorem~\ref{Algebraic}), 
  which we   call a
   {\em shadow}
 of ${\bf S}$.
In some situations there are natural choices for $X$ or $Y$. For example,
if ${\bf S}$ defines a semifield plane then we let $Y $  be the shears
axis;  the  semifield spreads
in  \cite{KW}  produce the following

\begin{theorem}
\label{ShadowSemifieldKW} 
For odd composite $n$  there are more than
$2^{n(\rho (n)-2)}/n^2$ pairwise inequivalent orthogonal
{\rm DHO}s  in $V^+(2n,2)$  that are shadows of symplectic semifield spreads.
\end{theorem}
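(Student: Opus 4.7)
The plan is to transport the semifield-spread enumeration from \cite{KW} into the DHO setting through the shadow construction of Theorem~\ref{Algebraic}. The argument has three main steps.

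\emph{Step 1: the input count.} The main result of \cite{KW} produces, for odd composite $n$, more than $2^{n(\rho(n)-2)}/n^2$ pairwise inequivalent symplectic semifield spreads in $V(2n,2)$, the factor $1/n^2$ already absorbing the Galois and autotopism ambiguities built into the equivalence relation used there. For each such spread ${\bf S}$, I take $Y\in{\bf S}$ to be the shears axis of the underlying semifield plane, a canonical choice. Picking any second $X\in{\bf S}$ yields a spread-set $\Sigma$, and Theorem~\ref{Algebraic} distorts $\Sigma$ to a set $\Delta_\Sigma$ of skew-symmetric operators defining the shadow DHO in $V^+(2n,2)$.

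\emph{Step 2: well-definedness up to orthogonal equivalence.} I would then verify that the shadow depends, up to $O^+(2n,2)$-equivalence, only on ${\bf S}$ and not on the auxiliary $X$. The shears group of the semifield plane acts transitively on ${\bf S}\setminus\{Y\}$, fixes $Y$, and lifts to an orthogonal action that conjugates the spread-sets corresponding to different choices of $X$ into one another; since the distortion in Theorem~\ref{Algebraic} is equivariant for this action, different $X$'s produce orthogonally equivalent shadows.

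\emph{Step 3: injectivity on equivalence classes.} Suppose $g\in O^+(2n,2)$ carries $\Delta_\Sigma$ to $\Delta_{\Sigma'}$. The crux is that $Y$ is intrinsically recoverable from the shadow DHO, for instance as the (essentially) unique totally singular subspace over which the DHO splits, or as the fixed subspace of the induced shears group. Granted such recovery, $g$ maps the shears axis of ${\bf S}$ to that of ${\bf S}'$; the induced map on the quotient $V^+(2n,2)/Y$, read against the inverse of the distortion of Theorem~\ref{Algebraic}, becomes a linear equivalence of the spread-sets $\Sigma,\Sigma'$, hence of ${\bf S}$ and ${\bf S}'$ as symplectic semifield spreads, under exactly the equivalence relation used in the \cite{KW} count.

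The main obstacle lies in Step 3: the intrinsic recovery of $Y$ from $\Delta_\Sigma$. If a shadow DHO could split over many pairwise inequivalent totally singular subspaces, a single DHO might arise from several inequivalent semifield spreads and dilute the count. The required rigidity should follow from the canonical role of the shears axis together with a fixed-point analysis of the shears group inside $O^+(2n,2)$: either the splitting subspace is unique, or the finitely many candidates form an orbit whose size is absorbed by the $1/n^2$ factor. Once this rigidity is in place, the lower bound of Step~1 transfers verbatim to the shadow DHOs produced in Step~2, yielding the theorem.
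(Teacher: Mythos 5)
Your overall strategy---push the semifield count of \cite{KW} through the shadow map and prove an injectivity statement---is the same as the paper's, but the execution has genuine gaps, and the most important one is precisely where your Step 3 says the count ``transfers verbatim.'' First, the arithmetic does not close: \cite{KW} (together with \cite{Ka2} and \cite{LMPT}) gives more than $2^{n(\rho(n)-1)}/n^2$ pairwise non-isotopic symplectic semifield spreads of the relevant kind, and the theorem's bound $2^{n(\rho(n)-2)}/n^2$ is smaller by a factor of $2^n$; your Step 1 misquotes the input count as already being the final bound, so your argument has no room for, and no mechanism to account for, the loss that actually occurs. That loss comes from the point your Step 3 glosses over: an equivalence $\Phi$ between the shadow DHOs of $\Sigma$ and $\widetilde\Sigma$ need not carry the distinguished member $V(B(0))$ to $V(\widetilde B(0))$; it carries it to some $V(\widetilde B(u))$, and (because $a\mapsto B(a)$ is \emph{not} additive even when $C$ is --- see Remark~\ref{ExampleCloning}(b)) undoing this produces not $\widetilde\Sigma$ but the twist $\widetilde\Sigma_u$, $u\in U$ arbitrary. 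So equivalence of shadows only forces the semifield of $\Sigma$ to be isotopic to that of \emph{some twist} of $\widetilde\Sigma$ (Theorem~\ref{IsoShadow}(c)), and one must divide by the $2^n$ choices of $u$. This is exactly the origin of the exponent $\rho(n)-2$ versus $\rho(n)-1$.

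Two further points. Even granting the recovery of $Y$ (which is fine, by Proposition~\ref{Basic}(b) and Proposition~\ref{Focus}), your reduction to ``a linear equivalence of the spread-sets'' tacitly assumes that the additively closed spread-set can be recovered from its shadow DHO-set, i.e.\ that $\Sigma\mapsto\Delta_\Sigma$ is injective on additively closed spread-sets; this is Proposition~\ref{Linear1} in the paper, a nontrivial rank-and-additivity argument valid for $n>5$, and without it a single shadow could arise from several non-isotopic semifields and dilute the count. Finally, your Step 2 is both unnecessary for a lower bound (one may simply fix one shadow per spread) and incorrect as stated: the elations with axis $Y$ are \emph{not} inherited by the shadow (Remark~\ref{ExampleCloning}(b)), and changing the auxiliary member $X$ corresponds to twisting the spread-set rather than to an orthogonal equivalence of shadows, so different choices of $X$ can genuinely change the DHO (cf.\ Section~\ref{The projections}).
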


Here $\rho (n)$ denotes the number of (not necessarily distinct) prime factors of the integer $n$.
\emph{The number in the theorem is not bounded above by any polynomial in
$2^n$.} The proof uses a somewhat general isomorphism result  (Theorem~\ref{IsoShadow})     for DHOs arising from Theorem~\ref{ProjectionDHO}.

We also consider the symplectic
spreads ${\bf S}$  of the nearly flag-transitive  planes in 
\cite{KW1}. Here the automorphism group of ${\bf S}$  contains
a normal cyclic group fixing precisely two members of ${\bf S}$
and   acting regularly on the remaining ones, which leads to the following

\begin{theorem}
\label{ShadowNearlyFlagKW}
For   odd    composite  $n>27$  
 there are more than
$2^{3^{\rho (n)-1}}$ pairwise inequivalent orthogonal
{\rm DHO}s in $V^+(2n,2)$ admitting a cyclic group of order $2^n-1$ that  fixes 
one member of the \DHO and acts regularly on the remaining ones.
\end{theorem}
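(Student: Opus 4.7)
The plan is to apply Theorem~\ref{ProjectionDHO} to orthogonal spreads obtained by lifting the symplectic spreads of the nearly flag-transitive planes of \cite{KW1}, and then to separate equivalence classes via Theorem~\ref{IsoShadow}.

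\emph{Input from \cite{KW1}.} For odd composite $n>27$, that paper produces more than $2^{3^{\rho(n)-1}}$ pairwise inequivalent symplectic spreads ${\bf S}$ in $V(2n,2)$. Each admits a cyclic automorphism group $C$ of order $2^n-1$ that fixes exactly two members $X,Y\in{\bf S}$ and acts regularly on the remaining $2^n-1$ members.

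\emph{Construction of the \DHO.} Lift ${\bf S}$ to the essentially unique orthogonal spread ${\bf O}$ in $V^+(2n+2,2)$; the members $X,Y$ extend to totally singular $(n+1)$-spaces $\tilde X,\tilde Y\in{\bf O}$, and $C$ lifts to automorphisms of ${\bf O}$ fixing $\tilde X$ and $\tilde Y$. Choose the singular point $P\subseteq\tilde Y$ that is the kernel of the natural surjection $\tilde Y\to Y$: this point is canonically determined by the lifting procedure and is therefore fixed by $C$. Theorem~\ref{ProjectionDHO} then yields an orthogonal \DHO ${\bf D}_{\bf S}:={\bf O}/P$ in $V:=P^\perp/P\simeq V^+(2n,2)$, with $2^n$ members indexed by ${\bf O}\setminus\{\tilde Y\}$. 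Since $C$ fixes $\tilde X$ and permutes the other $2^n-1$ members of ${\bf O}\setminus\{\tilde Y\}$ regularly, the induced action on ${\bf D}_{\bf S}$ fixes the image of $\tilde X$ and acts regularly on the remaining $2^n-1$ members, which is exactly the group-theoretic property asserted in the statement.

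\emph{Separating equivalence classes.} To convert the lower bound on symplectic spreads into the same lower bound on \DHO s, invoke Theorem~\ref{IsoShadow}: an isomorphism ${\bf D}_{\bf S}\to{\bf D}_{{\bf S}'}$ of shadows lifts to an isomorphism of the underlying symplectic spreads taking the distinguished member $Y$ of ${\bf S}$ to the distinguished member $Y'$ of ${\bf S}'$. Because the shadows in question all come equipped with a canonical distinguished member $Y$ (the one fixed, together with $X$, by $C$), the $2^{3^{\rho(n)-1}}$ inequivalent spreads from \cite{KW1} produce pairwise inequivalent \DHO s.

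\emph{Main obstacle.} The technical bottleneck is the last step: verifying that Theorem~\ref{IsoShadow} applies in the form needed, with the distinguished member $Y$ being faithfully preserved by isomorphisms. Here one exploits the rigidity furnished by $C$, whose fixed-member set $\{X,Y\}$ is an isomorphism invariant of $({\bf S},C)$. A secondary verification is that the projection point $P\subseteq\tilde Y$ is the unique $C$-invariant singular point of $\tilde Y$ meeting the lifting constraint, so that none of the cyclic symmetry is lost in the passage from ${\bf O}$ to ${\bf D}_{\bf S}$.
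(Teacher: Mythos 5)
There is a genuine gap in your ``separating equivalence classes'' step. You invoke Theorem~\ref{IsoShadow} as if it said that an isomorphism of shadow DHOs lifts to an isomorphism of the underlying symplectic spreads carrying the distinguished member $Y$ to $Y'$. It says no such thing: part (a) only produces an identity $T^\star B(a)T=\widetilde B_u(a')$ between DHO-sets and a \emph{twist} of the other spread-set, and the stronger conclusions (b),(c) require the spread-sets to be \emph{additively closed}. The nearly flag-transitive spread-sets of Example~\ref{ShadowNearFlag} are not additively closed when some $c_j\neq 0$ (this is exactly the fact exploited at the end of the proof of Theorem~\ref{NotIsomorphic}, via \cite[Lemma 4.7]{DM}), so the hypothesis fails for every spread you are using. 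Moreover, Section~\ref{The projections} shows that the passage from spreads to shadows is genuinely lossy --- non-isomorphic spreads can have isomorphic shadows and a single spread can have non-isomorphic shadows --- so the general ``lifting'' principle you are relying on is false, and the paper explicitly notes (Remark~\ref{ExampleLifting}(d)) that Theorem~\ref{IsoShadow}(a) does not seem to help in this section.

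The paper's actual argument is quite different and you would need its content to close the gap: it works with the explicit DHOs ${\bf D}_{\underline d,\underline c}$, shows the cyclic group ${\cal M}\simeq F^\star$ acts as claimed, and then determines the \emph{full} automorphism group and the isomorphism classification directly (Proposition~\ref{AutIso}). This requires a polynomial argument ruling out translation-type automorphisms (Lemma~\ref{NoTranslation}), Kantor's theorem on linear groups containing a Singer cycle plus the classification of $2$-transitive Chevalley actions to force ${\rm Aut}({\bf D})\le\Gamma{\rm L}(1,2^n)$ (Lemma~\ref{Singer}), and a comparison of the defining polynomials to show ${\bf D}_{\underline d,\underline c}\simeq{\bf D}_{\underline d',\underline c'}$ only when $\underline d=\underline d'$ and $\underline c'=\underline c^\alpha$. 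Only then does counting the admissible parameter sequences $(c_1,\dots,c_m)$, $c_i\in F_i$, divided by $|{\rm Aut}(F)|=n$, and applying Lemma~\ref{Estimate} give the bound $2^{3^{\rho(n)-1}}$. Your construction of the DHO and the verification of the cyclic group action are fine in spirit, but the inequivalence count --- the heart of the theorem --- is not established by your argument.
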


This time the number of DHOs is less than $2^n$.
We emphasize that there are many DHOs constructed using Theorem~\ref{ProjectionDHO} not considered in the preceding two theorems
(see   Example~\ref{transitive DHOs}). 

 In Section~\ref{qDHOs} we discuss a generalization of all of these results 
 to the more general context of $q$DHOs.

The authors of this paper view spreads and DHOs in somewhat different
 manners:  the first author prefers to think in terms of sets of operators \cite{ De1, DM}, while the second prefers sets of subspaces and (often) quasifields 
\cite{Ka1, Ka3, KW0, KW, KW1}.  We have mostly used the first approach (Theorem~\ref{ProjectionDHO} being the main exception), and have tried to provide translations between the two points of view 
(Remarks~\ref{CCKS skew-symmetric construction},
\ref{canonical=T condition} and
\ref{Constructing DHO-sets using orthogonal spreads},
Example~\ref{CCKS} and  Theorem~\ref{Clone}).

%%%%%%%%%%%%%%%%%%%%%%%%%%%%%%%%%%%%%%%%%%%%%

%%%%%%%%%%%%%%%%%%%%%%%%%%%%%%%%%%%%
%%%%%%%%%%%%%%%%%%%%%%%%%%%%%%%%%%%%

\section{Orthogonal DHOs and Theorem~\ref{ProjectionDHO}}

\label{Orthogonal DHOs}

All fields will have characteristic 2 except in Section~\ref{qDHOs}. 
Theorem~\ref{ProjectionDHO} is  sufficiently elementary that almost no background is needed:
\medskip

{\noindent \em Proof of} Theorem~\ref{ProjectionDHO}. 
It is standard that $V=P^\perp /P$ is an orthogonal space of type $V^+(2n,2)$ 
and that each totally singular subspace $X$ of $P^\perp$
has  a totally singular image  $\overline X$ in $V$.
  In particular, all members of ${\bf O}/P$
 are totally singular of dimension $n$. Since  $|{\bf O}/P|=2^n$ it suffices to show that any two
members of  ${\bf O}/P$ intersect in a point and
 any three   intersect trivially.
 
 Let  $X_1,X_2,X_3\in {\bf O}-\{ Y\}$
 be distinct.  Then $\overline X_i=
 \langle X_i\cap P ^\perp ,P  \rangle /P$
 and $\overline Y= Y /P$.
 Let $P=\langle w\rangle$.

 Since $(X_1\cap P^\perp )\cap (X_2\cap P^\perp )=0$ 
 we have $\dim \overline{X}_1\cap \overline{X}_2 \leq 1$. 
 On the other hand,   $w=x_1+x_2$ for some
$0\neq x_i\in X_i$. 
All vectors in the 2-space $\{0,w,x_1,x_2\}$ are singular, so this is a totally singular 2-space. 
 Hence $x_i\in X_i\cap P^\perp$ and
 $
 \overline{X}_1\cap \overline{X}_2=\langle x_1,w\rangle /P
 =\langle x_2,  w \rangle /P 
 $
  has dimension $1$, as required.
  
  Similarly,   
  \vspace{1pt}
  $
 \overline{X}_1\cap \overline{X}_3 
  =\langle x_3,  w \rangle /P 
 $ with
 $w=x_1'+x_3$, 
 $0\ne x_1'\in X_1$,
 $0\neq x_3\in X_3$.  
  
  If  $\overline{X}_1\cap \overline{X}_2\cap \overline{X}_3\ne0$ then
    $\overline{X}_1\cap \overline{X}_2=\overline{X}_1
  \cap \overline{X}_3$,  so that 
  $\{0,w,x_1,x_2\} =
\langle x_1,w\rangle 
=\langle x_1 ' ,w\rangle
  = \{0,w,x'_1,x_3 \}$  (our field is $\FFF_2$!). 
  This is impossible, since 
  $0\ne x_2\in X_2$ whereas 
  $X_2$ intersects $Y,X_1$ and $X_3$ only in 0.
Thus,   ${\bf O}/P$  is a DHO.%
\vspace{1pt}

  Finally,  if $x+P = y+P$ lies in 
$\overline{X_1}\cap \overline{Y}$ ($x\in X_1$, $y\in Y$),   
then   $x\in X_1\cap  (y+  P) \subseteq X_1 \cap Y  =0$, so that  
$ {\bf O}/P$  splits over
  $\overline{Y}$.\qed

 \begin{definition}
 \label{DHO projections}
The DHO  ${\bf O}/P$  in
{\rm Theorem~\ref{ProjectionDHO}}  is    the \emph{projection of ${\bf O}$ with respect to  $P$}.   Note that $Y\in {\bf O}$ is determined
by $P$. 
\end{definition}
 
The notions of equivalence and 
automorphisms of   symplectic or
 orthogonal spreads, and of DHOs, are crucial for our results:

  \begin{definition}
\label{Equivalence} 
$T\in \Gamma {\rm L}(V)$  is an 
\emph{equivalence}
  ${\bf E}\to {\bf E}'$ between  sets  
  ${\bf E}$  and $  {\bf E}'$
  of subspaces
of a vector space $V$
 if $T$ sends ${\bf E}$
onto ${\bf E}'$. The
\emph{automorphism group ${\rm Aut}({\bf E})$ of ${\bf E}$}
is
the group of equivalences from ${\bf E}$
to itself.
 \end{definition}
 
 Clearly, in Theorem~\ref{ProjectionDHO}  points $P$
  in the same ${\rm Aut}({\bf O})$-orbit produce isomorphic DHOs  ${\bf O}/P$
and   the stabilizer ${\rm Aut}({\bf O})_P$ of $P$ induces an automorphism group of
 ${\bf O}/P$.

Our goal   is the construction of large numbers of inequivalent DHOs.
For this purpose we need to  compare the construction in 
Theorem~\ref{ProjectionDHO}  to ones in    \cite[Sec.~3]{Ka1} and \cite[Thm.~2.13]{KW}
  (cf. Section~\ref{Projections and lifts with coordinates}).  First we recall another standard property of orthogonal spaces $V^+(2m,q)$ 
\cite[Thm.~11.61]{Tay}:  the set of totally singular $m$-spaces 
is partitioned into two equivalence classes where
   totally singular $m$-spaces $X,Y$ 
   are equivalent  if and only if $\dim X\cap Y \equiv m$ (mod 2).

 \begin{definition} [\emph{Lifts and projections of symplectic  and 
 orthogonal  spreads}]
\label{Orthogonal and symplectic spreads}
Assume that $n$ is odd.
Let $N $ be a nonsingular point of 
$\overline V=V^+(2n+2,q)$,
so that  $V:=N ^\perp/N  \simeq  V(2n,q)$ is a symplectic space.  
 If ${\bf S}$ 
 is a symplectic spread in $V$
	and ${\cal M}$ is one of the two classes of totally singular $(n+1)$-spaces in $\overline V$,  then (since $n+1$ is even) 
	 $$
	 \{ X\in {\cal M}\,| \, \langle X\cap N ^\perp ,N  \rangle /N  \in {\bf S} \}
	 $$
	 is an orthogonal spread in $\overline V$,
	 the {\em  lift} of {\bf S}.  $($Changing  ${\cal M}$  produces an equivalent orthogonal spread.$)$

	 This reverses$:$  if ${\bf O}$ is an orthogonal spread in 
	$V^+(2n+2,q)$,
then
  $$
{\bf O}/N  :=\big \{  \langle X\cap N ^\perp ,N  \rangle/N  \,| \, X\in {\bf O} \big\}
$$
is a symplectic spread in $V,$
	 the {\em projection} of  ${\bf S}$ with respect to $N$. 
	 This  strongly resembles Definition~\ref{DHO projections}.
As before,   points $N$
  in the same ${\rm Aut}({\bf O})$-orbit produce isomorphic spreads  ${\bf O}/N$.

\end{definition}  
 
\begin{proposition}
\label{Basic}
Let ${\bf D}$ be an orthogonal \DHO of $V=V^+(2n,2)$.
 Then

\begin{enumerate}
	\item[\rm (a)] $n$ is odd$,$ and 
	\item[\rm (b)] If ${\bf D}$ splits over a totally singular
	subspace $Y,$  then  $\bigcup _{X\in {\bf D}}X \cup Y$
	is the set of singular vectors in $V$. In particular$,$
	$Y$ is the only totally singular
	subspace over which ${\bf D}$ splits.
	 
\end{enumerate}

\end{proposition}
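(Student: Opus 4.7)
For part (a), the plan is to invoke the standard partition of the maximal totally singular subspaces of $V^+(2n,2)$ into two classes, where $X,X'$ lie in the same class precisely when $\dim(X\cap X')\equiv n\pmod 2$ (a fact already recalled just before Definition~\ref{Orthogonal and symplectic spreads}). Pick any three distinct members $X_1,X_2,X_3\in {\bf D}$: each pair intersects in a $1$-space. If $n$ were even, this would force each pair to straddle the two classes, which is impossible for three subspaces distributed among only two classes. Hence $n$ is odd.

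For part (b), I would carry out a double count. Let $s$ denote the number of nonzero singular vectors in $V^+(2n,2)$, which the standard hyperbolic-quadric formula gives as $(2^n-1)(2^{n-1}+1)$. Counting pairs $(v,X)$ with $0\ne v\in X\in{\bf D}$ gives $|{\bf D}|(2^n-1)=2^n(2^n-1)$. Since we work over $\FFF_2$, each $1$-space contains a unique nonzero vector, and the $\binom{2^n}{2}$ pairwise intersections of DHO members are pairwise distinct $1$-spaces (else three members would share a common point, contradicting the DHO axiom). The same axiom also forces each nonzero $v$ in $\bigcup_{X\in{\bf D}}X$ to lie in at most two members; solving the resulting linear system yields $|\bigcup_{X\in{\bf D}}X\setminus\{0\}|=2^{n-1}(2^n-1)$ (and, incidentally, that \emph{every} nonzero vector of the union lies in exactly two members). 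Because ${\bf D}$ splits over $Y$, the nonzero vectors of $Y$ are disjoint from $\bigcup_{X\in{\bf D}}X$, so adding $|Y\setminus\{0\}|=2^n-1$ gives exactly $s$. Every vector in $\bigcup_{X\in{\bf D}}X\cup Y$ is singular, so this counting equality upgrades to set equality with the singular vectors of $V$.

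The uniqueness assertion then falls out: any other totally singular splitting subspace $Y'$ has dimension $n$, consists of singular vectors, and satisfies $Y'\cap X=0$ for every $X\in{\bf D}$; by the set equality just proved, $Y'\subseteq Y$, and a dimension count forces $Y'=Y$. I do not anticipate a serious obstacle in this proof: the whole argument rests on the DHO axioms, the splitting hypothesis, and the classical singular-vector count. The only step worth pausing on is verifying that the pairwise intersections form $\binom{2^n}{2}$ distinct $1$-spaces, which is a direct translation of the triple-trivial-intersection axiom into the counting framework.
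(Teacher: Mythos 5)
Your proof is correct and follows essentially the same route as the paper: part (a) is the identical parity/two-classes argument, and part (b) is the same count of singular vectors against $\bigcup_{X\in{\bf D}}X$ (which the paper phrases as inclusion--exclusion), with the uniqueness of $Y$ read off from the resulting set equality. Incidentally, your figures $(2^n-1)(2^{n-1}+1)$ and $2^{n-1}(2^n-1)$ are the correct ones; the paper's displayed values $2^{2n-1}+2^n$ and $2^{2n-1}+1$ are each off by $2^{n-1}$, though their difference, and hence the conclusion, is unaffected.
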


\begin{proof} (a)
If  $X_1,X_2,X_3$ are distinct members of  ${\bf D}$, then any two have intersection of dimension 1.  
If $n$ is even,  then any two lie in different  classes
of   totally singular $n$-spaces, whereas   there are only two such classes.
\smallskip

(b)
 The set  $S_V$   of singular vectors of $V$ has size $2^{2n-1}+2^n$. 
By Inclusion-Exclusion,  $|\bigcup _{X\in {\bf D}}X|=2^{2n-1}+1$. Thus, if ${\bf D}$ splits over
the totally singular 
subspace $Y$
then $Y-0=S_V-\bigcup _{X\in {\bf D}}X$.
\end{proof}

\begin{remark}
 \emph{We will exclusively deal with orthogonal
{\rm DHO}s that split over  totally singular subspaces.}
However, there are  orthogonal   DHOs      
(see  {\rm  \cite[Prop. 5.4]{De1}})
that   split over subspaces that are not totally singular
but 
do not split over any totally singular subspace. \end{remark}

%%%%%%%%%%%%%%%%%%%%%%%%%%%%%%%%%%%%%%%%%%%%%%%%%

Any linear operator
preserving   an orthogonal DHO lies in the orthogonal group:

\begin{proposition} 
\label{Focus} Let ${\bf D}$ and 
${\bf D}'$ be  orthogonal 
{\rm DHO}s of  $V=V^+(2n,2)$ 
 that split  over the totally
  singular   subspace $Y$.
If $\Phi \in {\rm GL}(V)$ sends 
${\bf D}$ to  ${\bf D}'$, 
 then $\Phi$ lies in the stabilizer of  $Y$
  in the orthogonal group ${\rm O}(V)$.
\end{proposition}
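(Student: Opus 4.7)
The strategy is to show $\Phi(Y) = Y$ first, after which $\Phi \in {\rm O}(V)$ will follow from preservation of the singular set. Set $Z := \Phi(Y)$. Applying $\Phi$ to each decomposition $V = X \oplus Y$ ($X \in {\bf D}$) produces $V = X' \oplus Z$ for every $X' \in {\bf D}' = \Phi({\bf D})$, so ${\bf D}'$ splits over $Z$. Since ${\bf D}'$ also splits over the totally singular $Y$ and Proposition~\ref{Basic}(b) asserts that such a totally singular split is unique, it suffices to show $Z$ is totally singular; then $Z = Y$.

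\smallskip

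To show $Z$ is totally singular, my plan is to consider the pulled-back quadratic form $Q^* := Q \circ \Phi$ on $V$. Since $\Phi \in {\rm GL}(V)$, $Q^*$ is non-degenerate of the same plus type as $Q$, each $X \in {\bf D}$ is totally singular for $Q^*$ (because $\Phi(X) \in {\bf D}'$ is totally singular for $Q$), and $\Phi^{-1}(Y)$ is totally singular for $Q^*$ (because $Q^*(\Phi^{-1}(y)) = Q(y) = 0$ for $y \in Y$). Thus ${\bf D}$ is an orthogonal {\rm DHO} with respect to $Q^*$ splitting over $\Phi^{-1}(Y)$, and Proposition~\ref{Basic}(b) applied to $({\bf D}, Q^*)$ makes $\Phi^{-1}(Y)$ the unique such split. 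The goal is then to conclude that this split coincides with $Y$, giving $\Phi(Y) = Y$ and, equivalently, $Q^* = Q$.

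\smallskip

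With $\Phi(Y) = Y$ in hand, Proposition~\ref{Basic}(b) applied to both ${\bf D}$ and ${\bf D}'$ yields
\[
\Phi(S_V) \;=\; \Phi\Bigl(\bigcup_{X \in {\bf D}} X\Bigr) \cup \Phi(Y) \;=\; \bigcup_{X' \in {\bf D}'} X' \cup Y \;=\; S_V,
\]
where $S_V$ is the set of singular vectors. Over $\FFF_2$ the quadratic form $Q$ is the characteristic function of $V\setminus S_V$, so any element of ${\rm GL}(V)$ preserving $S_V$ automatically lies in ${\rm O}(V)$. Combining with $\Phi(Y) = Y$ gives the desired $\Phi \in {\rm O}(V)_Y$.

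\smallskip

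The main obstacle is matching the two totally singular splits produced by Proposition~\ref{Basic}(b): it provides uniqueness within a fixed orthogonal structure, so the uniqueness of $Y$ for $Q$ and of $\Phi^{-1}(Y)$ for $Q^*$ do not by themselves force $Y = \Phi^{-1}(Y)$. A cleaner route would bypass $Q^*$ altogether by first establishing $\Phi \in {\rm Sp}(V,B)$, where $B$ is the polar form of $Q$: the pairwise intersection $1$-spaces $X_1 \cap X_2$ are sent by $\Phi$ to $X_1' \cap X_2'$, and combined with the $B$-isotropy of each $X \in {\bf D}$ this tightly constrains the action of $\Phi$ on $B$. Then $\Phi \in {\rm Sp}(V,B)$ together with the observation that $\Phi$ respects the class of maximal $B$-isotropic subspaces containing ${\bf D}$ (opposite to the class of $Y$ by Proposition~\ref{Basic}(a) and the parity remark preceding Definition~\ref{Orthogonal and symplectic spreads}) should force $\Phi \in {\rm O}(V,Q)$, and Proposition~\ref{Basic}(b) then yields $\Phi(Y) = Y$.
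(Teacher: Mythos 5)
Your reduction and endgame are sound: once $\Phi(Y)=Y$ is known, Proposition~\ref{Basic}(b) applied to ${\bf D}$ and ${\bf D}'$ gives $\Phi(S_V)=S_V$, and over $\FFF_2$ preserving the singular set is the same as preserving $Q$. But neither of your two routes to the key step $\Phi(Y)=Y$ is actually closed. The first founders exactly where you say it does: uniqueness of the totally singular split for $Q$ and for $Q^{*}=Q\circ\Phi$ separately cannot identify the two splits. The second is not salvageable as sketched: the partition of maximal totally singular subspaces into two classes is a feature of the quadratic form, not of the alternating form $B$ (under ${\rm Sp}(V,B)$ all maximal totally isotropic subspaces are equivalent), and ${\rm Sp}(2n,2)$ is strictly larger than ${\rm O}^{+}(2n,2)$, so even a completed proof that $\Phi\in{\rm Sp}(V,B)$ --- which you only assert is ``tightly constrained'' --- would not by itself yield $\Phi\in{\rm O}(V)$ or $\Phi(Y)=Y$.

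The missing idea is that $\Phi$ already preserves a set from which $Y$ can be reconstructed combinatorially. Since ${\bf D}$ and ${\bf D}'$ split over the \emph{same} $Y$, Proposition~\ref{Basic}(b) gives $W:=\bigcup_{X\in{\bf D}}X=S_V\setminus(Y\setminus\{0\})=\bigcup_{X'\in{\bf D}'}X'$, and $\Phi$ carries the first union onto the second; hence $\Phi$ permutes $W$. Now no quadratic form on a $3$-dimensional $\FFF_2$-space has exactly six nonzero singular vectors (the possible counts are $7,5,3,1$), so any $3$-space of $V$ containing exactly six points of $W\setminus\{0\}$ must be totally singular, and its seventh point is then a singular vector outside $W$, i.e.\ a point of $Y$; conversely, every point of $Y$ is the seventh point of such a $3$-space (extend $\langle y\rangle$ to a totally singular $3$-space meeting $Y$ only in $\langle y\rangle$, possible since $n\geq 3$). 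Because $\Phi$ preserves $W$, it permutes these $3$-spaces and hence their seventh points, giving $\Phi(Y)\subseteq Y$ and so $\Phi(Y)=Y$; your endgame then finishes the proof.
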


\begin{proof} 
By   Proposition~\ref{Basic}(b), 
$S_V=\bigcup _{X\in {\bf D}} X \cup Y$
is the set of  {all} singular vectors in $V$.
Every $3$-dimensional subspace that has exactly six points in  
$S_V $ not in $Y$  is totally singular and hence 
has a seventh point in $Y$. Since every point of $Y$ arises this way,
$\Phi$ leaves $S_V$ and $Y$ invariant.
\end{proof}
 
\begin{corollary}
\label{Contragredient}
Let  
  ${\bf D}$ be an orthogonal 
  \DHO of  
  $V=V^+(2n,2)$  that splits over the totally
  singular subspace $Y$. 
Then $Y$ is invariant under 
  $G={\rm Aut}({\bf D}),$
  and the representation of $G$ induced on $V/Y$   is contragredient
  to the representation of $G$ induced on $Y$.

\end{corollary}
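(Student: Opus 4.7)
The plan is to deduce everything from Proposition~\ref{Focus} together with the standard fact that a maximal totally singular subspace in $V^+(2n,2)$ is its own perp.

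First, by Proposition~\ref{Focus} applied with ${\bf D}'={\bf D}$, every $\Phi\in G={\rm Aut}({\bf D})$ lies in the stabilizer of $Y$ in ${\rm O}(V)$. This gives $G$-invariance of $Y$ immediately, and also tells us that $G$ preserves the associated nondegenerate symmetric bilinear form $B$ on $V$ (the polarization of the quadratic form).

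Next, since $Y$ is a totally singular $n$-space in $V^+(2n,2)$, it is a maximal totally singular subspace, hence $Y^\perp=Y$ with respect to $B$. Therefore the recipe
\[
(y,\,v+Y)\ \longmapsto\ B(y,v)
\]
is well-defined on $Y\times(V/Y)$ (since $B(Y,Y)=0$ kills the ambiguity in the second slot) and is nondegenerate (if $B(Y,v)=0$ then $v\in Y^\perp=Y$, and if $B(y,V)=0$ then $y=0$ because $B$ itself is nondegenerate). Thus this pairing identifies $V/Y$ with the $\FFF_2$-dual $Y^*$ of $Y$.

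Finally, because each $\Phi\in G$ preserves $B$ and stabilizes $Y$, the identification $V/Y\cong Y^*$ is $G$-equivariant. In concrete terms, if $\Phi|_Y$ acts by a matrix $A$ in some basis of $Y$ and $\Phi|_{V/Y}$ by $C$ in the dual basis of $V/Y$, the invariance of the pairing forces $A^tC=I$, i.e.\ $C=(A^t)^{-1}$; this is precisely the statement that the representation on $V/Y$ is contragredient to the one on $Y$. There is no real obstacle here — the only slightly delicate point is verifying that the pairing descends to $V/Y$, which is immediate from $Y$ being totally singular.
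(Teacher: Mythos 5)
Your proposal is correct and follows essentially the same route as the paper: invariance of $Y$ comes from Proposition~\ref{Focus}, and the contragredience comes from the $G$-invariant duality $Y\times(V/Y)\to\FFF_2$ induced by the polarized bilinear form together with $Y=Y^\perp$. You have merely spelled out in more detail the well-definedness and nondegeneracy of the pairing that the paper leaves implicit.
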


\begin{proof}
The preceding  proposition  implies the  first assertion.
Since  $Y=Y^\perp$, the bilinear form associated  with the quadratic form
induces a $G$-invariant duality from $Y$ onto $V/Y$,
which implies    the second assertion.
\end{proof}

%%%%%%%%%%%%%%%%%%%%%%%%%%%%%%%%%%%
%%%%%%%%%%%%%%%%%%%%%%%%%%%%%%%%%%%%%%

%%%%%%%%%%%%%%%%%%%%%%%%%%%%%%%%%%%%%%%%%%%%%%%%%%%%%%
%%%%%%%%%%%%%%%%%%%%%%%%%%%%%%%%%%%%%%%%%%%%%%%%%%%%%

\section{Coordinates and  symplectic spread-sets}
\label{Coordinates and  symplectic spread-sets}
%%%%%%%%%%%%%%%%%%%%%%%%%%%%%%%%%%%%%%%%%%%%%%%%

In this section we use coordinates of orthogonal and symplectic spreads
in order to describe
 operations   
that do not require   projections from higher-dimensional  orthogonal
spreads. 
Throughout the remainder of this paper we will always have 
$$   \mbox{$U=V(n,q)$ \  and   \  $V\cong U\oplus U$},
$$
 where $q$ is  even except in Section~\ref{qDHOs}.
If $U$ is equipped with a nondegenerate symmetric bilinear form $ \textsf{b}(\cdot ,\cdot)$
we denote by $T^\star$ the  operator   adjoint to $T\in {\rm End}(U)$.

\subsection{\noindent\bf Coordinates for symplectic spreads, orthogonal spreads
and orthogonal DHOs} 
\label{coordinates}
Assume that $V $ is a   symplectic  space,
 and denote by ${\bf E}$ either
a symplectic spread, an orthogonal spread, or an orthogonal
DHO in $V$ that splits over a totally singular subspace.  The  symplectic form $(\cdot ,\cdot) $ on $V$  vanishes on all members of ${\bf E}$.  
For an orthogonal spread or  DHO,
  all members of ${\bf E}$ are totally singular
  with respect to a quadratic form $Q$ polarizing to
$(\cdot ,\cdot) $.  For  a DHO we always assume 
that $q=2$.

In order to coordinatize  ${\bf E}$ we choose any distinct
$X,Y\in {\bf E}$ if ${\bf E}$ is a symplectic or orthogonal spread.
If ${\bf E}$  is a   DHO that splits over a totally singular subspace  $Y$ then choose $X\in {\bf E}$.
 We identify $V$ with $U\oplus U$.  We may assume that
   $$
   \mbox{$X=U\oplus  0$ and $Y=0\oplus  U$,}
   $$
     the symplectic  form on  $U\oplus U$  is
\begin{equation}
\label{bilinear form}
 \big  ((x,y),(x',y')\big)= \textsf{b}(x,y')+ \textsf{b}(y,x') ,
\end{equation}
  and the quadratic form
is  
\begin{equation}
\label{quadratic form}
 Q\big((x,y)\big)=\textsf{b}(x,y).
 \end{equation} 

 For  $Z\in {\bf E}-\{ Y\}$ there is a unique
   $L\in {\rm End}(U)$ such
 that $Z=V(L)$, where
\begin{equation}
\label{V}
V(L) :=\{ (x,xL) \,| \, x\in U\}.
 \end{equation} 
Each
 $L$ is self-adjoint with respect to $\textsf{b}$ if ${\bf E}$ is a symplectic spread
(as $Z$ is totally isotropic), and $L$ is even skew-symmetric
(i.\,e., $\textsf{b}(x,xL)=0$ for all $x$) 
if ${\bf E}$ is an orthogonal spread or  a DHO (as $Z$ is totally singular).
The subspace $Z=X$ corresponds to $L=0$.  If $Z\neq X$ then
$L$ is invertible if ${\bf E}$ is a symplectic or orthogonal spread, while
$L$ has rank $n-1$ in the DHO case. 
Hence, there is a set $\Xi
\subseteq {\rm End}(U)$ containing 0  
such that 
$$
{\bf E}=\{ V(L) \,| \, L\in \Xi \}\cup \{ Y\}
$$
if ${\bf E}$ is a symplectic or orthogonal spread
and 
$${\bf E}=\{ V(L) \,| \,  L\in \Xi \}
$$ 
if ${\bf E}$ is  an  orthogonal DHO
that splits over the totally singular subspace $Y=0\oplus  U$.

\medskip

\begin{definition} Let $V=U\oplus U, {\bf E}, X$ and $Y$ be as above.
\begin{itemize}
\item
If ${\bf E}$ is a symplectic  spread, then
 $\Xi$ is a $ ($symplectic$)$ 
 \emph{spread-set of ${\bf E}$ with respect to the ordered pair
  $(X,Y)$}.
  \item
If ${\bf E}$ is an orthogonal spread, then
  $\Xi$  is a  
 \emph{Kerdock set of ${\bf E}$ with respect to the ordered pair
  $(X,Y)$} (cf. \cite{Ka1}).
  \item
If ${\bf E}$ is an orthogonal DHO then $\Xi$ is a  
  DHO-\emph{set of ${\bf E}$ with respect to 
  $X$}.  (Note that there is no choice for $Y$,   the  space over which
${\bf E}$  splits.)
  \end{itemize}
 \end{definition}
  
  %%%%%%%%%%%%%%%%%%%%%%%%%%%%%%%%%%%%%%%%%%%%%%%%%55
  %%%%%%%%%%%%%%%%%%%%%%%%%%%%%%%%%%%%%%%%%%%%%%%%%%%%%%%%%%%%%

 Conversely,  it is routine to check the following: 
 \begin{lemma}
 \label{Coord}
 Assume that   $\Xi \subseteq {\rm End}(U)$ is a set
 of self-adjoint operators  containing $0$.
Define symplectic  and  quadratic forms on  $V=U\oplus  U$   using \eqref{bilinear form} and  \eqref{quadratic form}.
\begin{enumerate}
\item[\rm (a)] If $|\Xi |=q^n$ and $\det (L+L')\neq 0$
	for  all distinct $L,L'\in \Xi,$  then
	${\bf E}=\{ V(L) \,| \, L\in \Xi \}\cup \{ 0\oplus  U\}$
	is a symplectic spread of $V$.
\item[\rm(b)] If $|\Xi |=q^{n-1},$  $\det (L+L')\neq 0$
	for  all distinct $L,L'\in \Xi,$  and  all members of 
	 $\Xi$ are skew-symmetric$,$  then
	${\bf E}=\{ V(L) \,| \,  L\in \Xi \}\cup \{ 0\oplus  U\}$
	is an orthogonal spread  of $V$.
\item[\rm(c)] Assume that $|\Xi |=2^n $ with 
$n$   odd$,$   that 
		all members of  $\Xi$ are skew-symmetric$,$ and  that
	\begin{enumerate}
  \item[\rm(1)] ${\rm rk}(L+L')=n-1$   for  all distinct $L,L'\in \Xi,$ and

  \item[\rm(2)] If $L\in \Xi$ then $\big \{ \ker (L+L')\,| \, L'\in \Xi -\{L\}\big \}$
  is the set of $1$-spaces of $U$.%

\end{enumerate}
Then
	${\bf E}=\{ V(L) \,| \, L\in \Xi \}$
	is an orthogonal \DHO that splits over  $0\oplus  U$.
\end{enumerate}

 \end{lemma}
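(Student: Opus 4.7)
The plan is to reduce all three parts to a single intersection computation together with routine counting. First I set up the ambient geometry: form~\eqref{bilinear form} is nondegenerate and alternating on $V=U\oplus U$, form~\eqref{quadratic form} polarizes to it, and $X=U\oplus 0$, $Y=0\oplus U$ are complementary totally singular $n$-spaces, so $V$ has type $V^+(2n,q)$. The basic computation is $(x,xL)=(x',x'L')$ iff $x=x'$ and $x(L+L')=0$ (using characteristic $2$), giving
\[
V(L)\cap V(L')=\{(x,xL) \,|\, x\in\ker(L+L')\},\qquad V(L)\cap Y=0 \ \text{for every }L.
\]
Evaluating~\eqref{bilinear form} on $(x,xL)$ and $(x',x'L)$ yields $\textsf{b}(x,x'L)+\textsf{b}(x,x'L^\star)$, which vanishes iff $L=L^\star$; and $Q((x,xL))=\textsf{b}(x,xL)$ vanishes for all $x$ iff $L$ is skew-symmetric.

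For parts (a) and (b) the self-adjoint (resp.\ skew-symmetric) hypothesis makes each $V(L)$ totally isotropic (resp.\ totally singular); invertibility of $L+L'$ then forces any two members of ${\bf E}$ to meet trivially, the intersection with $Y$ being zero by the formula above. Part (a) follows because $|{\bf E}|=q^n+1$ pairwise trivially intersecting $n$-spaces of a $2n$-dimensional space automatically cover every nonzero vector. Part (b) is analogous: the $q^{n-1}+1$ pairwise trivially intersecting totally singular $n$-spaces cover $(q^{n-1}+1)(q^n-1)$ nonzero vectors, which is exactly the number of nonzero singular vectors of $V^+(2n,q)$, so ${\bf E}$ partitions them and is an orthogonal spread.

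For part (c) the skew-symmetric hypothesis again makes each $V(L)$ totally singular, and $V(L)\cap Y=0$ together with $\dim V(L)+\dim Y=2n$ gives $V=V(L)\oplus Y$, establishing the splitting. Condition~(1) translates the intersection formula into $\dim V(L)\cap V(L')=1$ for distinct $L,L'\in\Xi$. The only new content is the triple-intersection axiom: for three distinct $L,L',L''\in\Xi$,
\[
V(L)\cap V(L')\cap V(L'')=\{(x,xL) \,|\, x\in\ker(L+L')\cap\ker(L+L'')\},
\]
and condition~(2) says that as $L'$ ranges over $\Xi-\{L\}$ the $1$-spaces $\ker(L+L')$ of $U$ are all distinct, so the intersection on the right is $0$. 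Finally $|{\bf E}|=|\Xi|=2^n=(2^n-1)/(2-1)+1$ matches the \DHO count, completing the verification.

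I do not expect a real obstacle; the lemma is essentially a dictionary for the coordinates introduced just before it. The one point worth flagging is that hypothesis~(c)(2) should be parsed as a bijectivity statement: it says not merely that each $\ker(L+L')$ is one-dimensional (which already follows from (1) together with a cardinality count), but that $L'\mapsto\ker(L+L')$ is a bijection from $\Xi-\{L\}$ onto the point set of $U$. This is exactly what upgrades pairwise intersections of dimension~$1$ to trivial triple intersections.
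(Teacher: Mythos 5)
Your verification is correct and is precisely the routine coordinate check the paper has in mind: the paper offers no proof of Lemma~\ref{Coord}, introducing it with ``it is routine to check the following,'' and your intersection formula $V(L)\cap V(L')=\{(x,xL)\,|\,x\in\ker(L+L')\}$ plus the counting of (singular) vectors is the intended argument. The only axiom you do not explicitly address in (c) is that the members of ${\bf E}$ span $V$ (part of the paper's definition of a DHO); this is immediate, e.g.\ from hypothesis (2) applied at $L=0$: the kernels $\ker L'$ exhaust the $1$-spaces of $U$, so the images ${\rm Im}\,L'=(\ker L')^\perp$ exhaust the hyperplanes and together with $V(0)=U\oplus 0$ span $U\oplus U$.
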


 %%%%%%%%%%%%%%%%%%%%%%%%%

\begin{remark}
\label{Product}
Let  $\textsf{b}(x,y)=x\cdot y$ be the usual dot product
and identify ${\rm End}(U)$ with the space of all $n\times n$ matrices over
$\FFF _q$.
Then $L\in {\rm End}(U)$ is self-adjoint
if and only if  $L=L^t$, and $L$ is  skew-symmetric
 if and only if,  in addition,  its diagonal
is  $0$.

A variation is   used in Sections 4 and 5: identify $U$ 
with $F=\FFF_{q^n}$ and use the trace form
$$
\textsf{b}(x,y)= {\rm Tr}(xy),
$$
where ${\rm Tr} \col  F \to \FFF _q$ is the  trace map.

\end{remark}

Rank 1 operators will play a crucial role for  our results.
The following elementary 
description of
those  operators is also  in \cite[Prop. 5.1]{LMPT1}.

\begin{lemma} 
\label{RankOne} 
  If
$T \in {\rm End}(U)$  has  rank $1,$
then  $T=E_{a,b}$
for some $0\neq a,b \in U,$  where  
\begin{equation}
\label{Im Eab}
\mbox{
$xE_{a,b}:=\textsf{b}(x,a)b$  \ for all \ $x\in U$.
}
\end{equation}
If $E_{a,b}=E_{a',b'}$  for nonzero $a,a',b,b',$
then  $a'=ka$
and $b'=k^{-1}b$  for some  $k\in \FFF _q^\star$.
\end{lemma}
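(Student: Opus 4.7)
The plan is to unpack what rank $1$ means in this setting, use the nondegeneracy of $\textsf{b}$ to identify linear functionals with inner products against vectors, and then read off the uniqueness by comparing images and kernels.

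First I would argue existence. Since $T$ has rank $1$, its image is a line $\langle b\rangle$ for some nonzero $b\in U$, so there is a unique linear functional $\varphi\colon U\to \FFF_q$ with $xT=\varphi(x)\,b$ for every $x\in U$. Because $T\ne 0$, the functional $\varphi$ is nonzero. The bilinear form $\textsf{b}$ is nondegenerate, so the map $U\to U^\star$ sending $a\mapsto \textsf{b}(\cdot,a)$ is an isomorphism; hence there is a unique nonzero $a\in U$ with $\varphi(x)=\textsf{b}(x,a)$ for all $x$. Thus $xT=\textsf{b}(x,a)\,b=xE_{a,b}$, so $T=E_{a,b}$.

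For the uniqueness clause, suppose $E_{a,b}=E_{a',b'}$ with $a,a',b,b'$ all nonzero. Comparing images, the lines $\langle b\rangle$ and $\langle b'\rangle$ coincide, so $b'=\lambda b$ for some $\lambda\in \FFF_q^\star$. Substituting into $\textsf{b}(x,a)\,b=\textsf{b}(x,a')\,b'$ and cancelling $b$ yields $\textsf{b}(x,a)=\textsf{b}(x,\lambda a')$ for every $x\in U$, and nondegeneracy of $\textsf{b}$ forces $a=\lambda a'$. Setting $k:=\lambda^{-1}$ gives $a'=ka$ and $b'=k^{-1}b$, as claimed.

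There is essentially no obstacle here: the whole argument is a routine consequence of the rank-one factorization and the standard identification $U\cong U^\star$ furnished by the nondegenerate form $\textsf{b}$. The only small thing to watch is the bookkeeping of the scalar in the uniqueness step, where inserting $k$ on the $a$-side forces $k^{-1}$ on the $b$-side so that the product $\textsf{b}(x,a)b$ is left unchanged.
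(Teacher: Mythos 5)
Your proof is correct and follows essentially the same route as the paper's: the paper works from the kernel (a hyperplane $\langle a\rangle^\perp$) while you work from the image and the duality $U\cong U^\star$, but both are the same routine use of nondegeneracy of $\textsf{b}$, and your uniqueness computation matches the "calculation" the paper leaves implicit.
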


\begin{proof}
 Write $U_0=\ker T = \langle a\rangle ^\perp$ with $a\in U$. Let $v\in U-U_0$    such that $\textsf{b} (v,a)=1$.
 Then $b:=vT\neq 0$.
Thus, $0=uT=uE_{a,b}$  for $u\in U_0$ and 
$vT=b=vE_{a,b}$, so that $T=E_{a,b}$.

If $E_{a,b}=E_{a',b'}$  then   $\langle a \rangle
=\langle a' \rangle$ and $\langle b \rangle
=\langle b' \rangle$, and a   calculation completes the proof.
\end{proof}

\begin{remark}
 Since $\textsf{b} (x ,yE_{a,b})=\textsf{b}(xE_{b,a},y)$,
 the operator
$E_{b,a}$ is adjoint to $E_{a,b},$ 
so that  $E_{a,b}$ is self-adjoint if and only if  $\langle a \rangle
=\langle b \rangle$. In this case there is  a $($uniquely determined$)$ 
$c\in \langle a \rangle
=\langle b \rangle$ such that $E_{a,b}=E_{c,c}$.

In terms of matrices, the lemma is the elementary fact that rank $1$ matrices 
have the form $a^tb$ for nonzero row vectors $a,b$.  This
matrix  is symmetric if and only if $\langle a \rangle
=\langle b \rangle$.

\end{remark}

%%%%%%%%%%%%%%%%%%%%%%%%%%%%%%%%%%%%%%%%%%%%%%%%%

\begin{lemma}
\label{SelfAdjoint}
For each self-adjoint operator  $T$ there  
  is a unique
  self-adjoint operator  $R=E_{a,a}$ of
rank $\leq 1$   such that $T+R$
is skew-symmetric. Moreover$,$

\begin{enumerate}
	\item[\rm (a)] $a\in {\rm Im}\,T;$ 

	\item[\rm(b)]	
	$
	{\rm rk}\, (T+R)=
	\left\{
	\begin{array}{ll}
	 {\rm rk}\, T & {\rm if \ } {\rm rk}\,    T\equiv 0\pmod{2}\\
	 {\rm rk}\, T  \pm 1 &{\rm if \ } {\rm rk}\,T\equiv 1\pmod{2};
	 \end{array}
	 \right.
	 $
	 
		\item[\rm(c)] If $S$ is self-adjoint and $S+E_{b,b}$
		is skew-symmetric$, $ then $R'=E_{a+b,a+b}$ is the unique
		self-adjoint operator of rank $\leq 1$ such that
		$T+S+R'$ is skew-symmetric$;$ and 
		
		\item[\rm(d)] If $n$ is odd and $T$ is invertible$,$
		then $\ker ( T+ E_{a,a})=\langle aT^{-1} \rangle$ and
		$\textsf {b} (a, aT^{-1})\neq 0$.%

\end{enumerate}

\end{lemma}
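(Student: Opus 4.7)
The plan is to encode the condition ``$T+R$ skew-symmetric'' as an equation for the function $Q_T(x):=\textsf{b}(x,xT)$ and then exploit perfectness of $\FFF_q$ to extract $a$.  Because $T$ is self-adjoint and the characteristic is $2$, the defect $Q_T(x+y)-Q_T(x)-Q_T(y)$ equals $2\,\textsf{b}(x,yT)=0$, so $Q_T$ is additive; it also satisfies $Q_T(\lambda x)=\lambda^2 Q_T(x)$.  By the remark following Lemma~\ref{RankOne}, a self-adjoint operator of rank $\le 1$ has the form $R=E_{c,c}$, and $c$ is uniquely determined by $R$.  A direct computation gives $\textsf{b}(x,xE_{c,c})=\textsf{b}(x,c)^2$, so the condition ``$T+E_{c,c}$ skew-symmetric'' is exactly $Q_T(x)=\textsf{b}(x,c)^2$ for all $x$.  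Since $\FFF_q$ is perfect, the map $\ell(x):=Q_T(x)^{1/2}$ is a well-defined $\FFF_q$-linear functional, and non-degeneracy of $\textsf{b}$ produces a unique $a\in U$ with $\ell(x)=\textsf{b}(x,a)$.  Setting $R=E_{a,a}$ yields both existence and uniqueness.

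For (a), note that $(\ker T)^\perp=\mathrm{Im}\,T$ because $T$ is self-adjoint.  If $y\in\ker T$ then $Q_T(y)=0$, hence $\textsf{b}(y,a)=0$; so $\ker T\subseteq \langle a\rangle^\perp$ and, taking perps, $a\in \mathrm{Im}\,T$.  For (b), the operator $T+R$ is alternating in characteristic $2$, so the associated bilinear form corresponds to an alternating matrix and therefore has even rank; on the other hand $R$ has rank $\le 1$, so $|\mathrm{rk}(T+R)-\mathrm{rk}\,T|\le 1$.  The only even element of $\{\mathrm{rk}\,T-1,\mathrm{rk}\,T,\mathrm{rk}\,T+1\}$ is $\mathrm{rk}\,T$ if $\mathrm{rk}\,T$ is even and $\mathrm{rk}\,T\pm 1$ if it is odd, which is exactly the claim.

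Part (c) is almost free from the construction: $T+S$ is self-adjoint, and additivity of $Q$ together with biadditivity of the squared form $\textsf{b}(\cdot,\cdot)^2$ gives
\[
Q_{T+S}(x)=Q_T(x)+Q_S(x)=\textsf{b}(x,a)^2+\textsf{b}(x,b)^2=\bigl(\textsf{b}(x,a)+\textsf{b}(x,b)\bigr)^2=\textsf{b}(x,a+b)^2.
\]
Applying the existence-and-uniqueness clause to $T+S$ identifies the unique self-adjoint rank-$\le 1$ correction as $E_{a+b,a+b}$.

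For (d) we use (b) with $\mathrm{rk}\,T=n$ odd: $\mathrm{rk}(T+E_{a,a})$ is even and at most $n$, so it must equal $n-1$, giving $\dim\ker(T+E_{a,a})=1$.  Rearranging $x(T+E_{a,a})=0$ yields $xT=\textsf{b}(x,a)a$, whence $x=\textsf{b}(x,a)\,aT^{-1}$ (using invertibility of $T$).  Thus $\ker(T+E_{a,a})\subseteq\langle aT^{-1}\rangle$; note $a\ne 0$, for otherwise $T$ itself would be skew-symmetric, forcing its rank to be even.  Equality of these $1$-dimensional spaces follows, and substituting $x=aT^{-1}$ back into $xT=\textsf{b}(x,a)a$ gives $\textsf{b}(a,aT^{-1})=1\ne 0$.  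The main obstacle is paragraph one — getting the Frobenius-semilinear book-keeping right and realizing that the equation $Q_T(x)=\textsf{b}(x,c)^2$ can be solved in one step by taking square roots in the perfect field $\FFF_q$; everything else is then essentially mechanical.
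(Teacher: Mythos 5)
Your proof is correct and follows essentially the same route as the paper: both reduce existence and uniqueness to the identity $\textsf{b}(x,xT)=\textsf{b}(x,a)^2$ and use nondegeneracy of $\textsf{b}$ to pin down $a$ (the paper by citing uniqueness of the representing vector for a Frobenius-semilinear functional, you by taking square roots in the perfect field to get an honest linear functional --- the same step in different clothing). The only cosmetic deviations are in (a), where you argue via $\ker T\subseteq\langle a\rangle^\perp$ and perps instead of the paper's contradiction, and in (c), where you reuse the uniqueness clause on $Q_{T+S}$ rather than decomposing $(T+S)+E_{a+b,a+b}$ into three skew-symmetric summands; both are equally valid.
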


\begin{proof}
As $T$ is self-adjoint, the map $\lambda _T \col  U\to  \FFF_q$
given by  $ x\mapsto \textsf{b}(x,xT)$
is semilinear: $\lambda _T(kx)=k^2\lambda _T (x)$ for
$x\in U,k\in \FFF_q$. 
If $\lambda _T=0$ then $T$ is skew-symmetric and we set $R=0=E_{0,0}$.
Assume that $\lambda _T\neq 0$ and
set $U_0=\ker \lambda _T$.  Pick $u\in U$ such that $\lambda _T(u)=1$ and  
 $a\in U$ such that
$U_0=\langle a \rangle ^\perp$ and $\textsf{b} (u,a)=1$. Then $S=T+E_{a,a}$ is self-adjoint.
Moreover $\lambda _S(x) =\lambda _T(x) + \textsf{b} (x,a)^2$ is 0 on both  $U_0$ and $u$, so that  $S$ is skew-symmetric. In particular,
\begin{equation}
\label{b(x,a)}
\mbox{$\lambda _T(x)= \textsf{b} (x,a)^2$
for all $x \in U$. 
}
\end{equation}
As $\textsf{b}$ is nondegenerate,  
every semilinear functional from  $U$ to $\FFF_q$ associated
with the Frobenius automorphism 
has the form $x\mapsto  \textsf{b} (x,a)^2$ 
 for a  {unique}
$a\in U$. This implies the uniqueness of $R=E_{a,a}$.
\smallskip

 (a) Let $T+E_{a,a}$ be skew-symmetric and assume
 that  $a\not\in {\rm Im}\,T=({\rm Im}\,T)^\perp{}^\perp$. 
 Then $ \textsf{b} (a, ({\rm Im}\,T^\perp ))\neq \{0\}$,
 so that   there exists
  $y\in
 ({\rm Im}\,T)^\perp$ with 
 $1=\textsf{b} (a,y)$.   Since
 $y$ and $yT$ are perpendicular, (\ref{b(x,a)})  implies that 
 $1=\textsf{b} (a,y)^2 =\textsf{b} (y,yT)=0$,
 a contradiction.
 \smallskip
 
 (b) Clearly ${\rm rk}\, (T+R)\equiv 0\pmod{2}$. 
 \smallskip
 
 (c) 
 $
 (T+S)+E_{a+b,a+b}=(T+E_{a,a})+ (S+E_{b,b})+(E_{a,b}+E_{b,a})
 $
expresses  the left hand side   as a sum of skew-symmetric operators.
\smallskip

  (d)   By (b),  $\dim\ker(T+E_{a,a})=1$.
Let $0\neq x\in \ker T+E_{a,a}$. 
By (\ref{Im Eab}), 
$0=xT+\textsf{b} (a,x)a$ and hence  $x=\textsf{b} (a,x)aT^{-1}$, so that   
$0\ne x\in \langle aT^{-1}\rangle $ and $\textsf{b} (a,x)\neq 0$.\end{proof}

\begin{remark}
\label{CCKS skew-symmetric construction}
 In terms of matrices the first paragraph of the lemma states that, if $A$ is a symmetric matrix, then $A+d(A)^td(A)$ is skew-symmetric$,$ where 
$d(A)$ is the diagonal of $A$ written as a row vector 
as in 
 \cite[Lemma~7.3]{CCKS}.
\end{remark}

\begin{lemma}
\label{Labeling}
For a symplectic spread-set  $\Sigma$ of $U=V(n,q) $  with   $n$ odd$,$   
\begin{enumerate}
\item[\rm(a)] 
There is a unique bijection $C\col U\to \Sigma$ such that
 $C(a)+E_{a,a}$ is skew-symmetric for all $a\in U,$  and
 \item[\rm(b)]
$C$ is additive iff $\Sigma$ is additively closed.
\end{enumerate}

\end{lemma}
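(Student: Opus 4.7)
The plan is to define $C$ as the inverse of the map $\alpha\col \Sigma\to U$ furnished by Lemma~\ref{SelfAdjoint}: each $T\in\Sigma$ is self-adjoint, so there is a unique $a=\alpha(T)\in U$ with $T+E_{a,a}$ skew-symmetric. Since $|\Sigma|=q^n=|U|$, part (a) reduces to showing that $\alpha$ is injective; the uniqueness of $C$ is then inherited from that of $\alpha$.

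For injectivity, suppose $\alpha(T)=\alpha(T')=a$. Summing the two skew-symmetric operators $T+E_{a,a}$ and $T'+E_{a,a}$ shows that $T+T'$ is itself a skew-symmetric self-adjoint operator. Then the bilinear form $(x,y)\mapsto \textsf{b}(x,y(T+T'))$ is alternating (its symmetry comes from self-adjointness of $T+T'$ and its vanishing on the diagonal from the skew condition), and it has the same rank as $T+T'$ because $\textsf{b}$ is nondegenerate. Alternating forms have even rank, so $T+T'$ cannot be invertible when $n$ is odd. But $\Sigma$ is a symplectic spread-set, so $\det(T+T')\ne 0$ unless $T=T'$ by Lemma~\ref{Coord}(a); hence $T=T'$, which proves (a).

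For (b), one direction is immediate: if $C$ is additive then $C(a)+C(b)=C(a+b)\in\Sigma$, so $\Sigma$ is additively closed. Conversely, assume $\Sigma$ is additively closed and fix $a,b\in U$. Then $C(a)+C(b)$ lies in $\Sigma$, so by the defining property of $C$ there is a unique $c=\alpha(C(a)+C(b))\in U$ with $(C(a)+C(b))+E_{c,c}$ skew-symmetric. On the other hand, Lemma~\ref{SelfAdjoint}(c) applied with $T=C(a)$ and $S=C(b)$ identifies this unique $c$ as $a+b$. Comparison gives $C(a)+C(b)=C(a+b)$.

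The main obstacle is the parity step in the injectivity argument for (a): it is the only place where the oddness of $n$ is used, via the nonexistence of nondegenerate alternating forms on an odd-dimensional space (equivalently, the nonexistence of invertible skew-symmetric self-adjoint operators on $U$). Everything else is a direct translation between Lemma~\ref{SelfAdjoint} and the defining property of $C$.
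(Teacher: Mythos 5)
Your proof is correct and follows essentially the same route as the paper's: both obtain $C$ as the inverse of the assignment $T\mapsto a$ furnished by Lemma~\ref{SelfAdjoint}, prove injectivity by observing that $a_T=a_{T'}$ would force the invertible operator $T+T'$ to be skew-symmetric (impossible for odd $n$, since skew-symmetric operators have even rank), and deduce additivity in (b) from Lemma~\ref{SelfAdjoint}(c) together with the uniqueness of the rank~$\le 1$ correction. The only nit is your citation for $\det(T+T')\neq 0$ when $T\neq T'$: that property of a spread-set comes from the coordinatization in Section~\ref{coordinates}, whereas Lemma~\ref{Coord}(a) is the converse statement.
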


\begin{proof}
(a)
If  $0\neq L\in \Sigma$ then  the self-adjoint, invertible operator  $L$  is not    
skew-symmetric as $n$ is odd.  By the preceding lemma there is a unique nonzero
vector $a=a_L\in U$ such that $L+E_{a,a}$ is
skew-symmetric of rank $n-1$. If $0\neq L,L'\in \Sigma$,
$L\neq L'$, then $a_L\neq a_{L'}$ as $L+L'$ is 
 invertible and hence not skew-symmetric,  so that $C$ is bijective.
 
 (b) 
 Since one direction is obvious, assume that $\Sigma$ is additively closed.
 If $a,b\in U$,  then $C(a)+C(b)=C(c)$ for some $c\in U$.
 By definition $C(c)+E_{c,c}$ is skew-symmetric, and so is 
  $C(a)+ C(b)+E_{a+b,a+b}=C(c)+E_{a+b,a+b}$ by  Lemma~\ref{SelfAdjoint}(c).
Then $c=a+b$ by Lemma~\ref{SelfAdjoint},
as required.
\end{proof}

%%%%%%%%%%%%%%%%%%%%%%%%%%%%%%%%%%%%%%%%%%%%%%%%%%%%
\begin{definition}[{{\em Canonical labeling}}]
\label{canonical}
The unique
bijection $C \col U\to \Sigma$ in  {\rm Lemma~\ref{Labeling}} is  the
\emph{canonical labeling} of the symplectic spread-set  $\Sigma$  of operators of  $U$.  Notation: 
 $C=\LLL(\Sigma)$.

\remark
\label{canonical=T condition}
Each symplectic spread-set $\Sigma\subseteq {\rm End} (U)$ determines 
a prequasifield on $U$ defined by $x*a=xC(a)$ for any additive bijection $C\col U\to \Sigma$.   Then $C$ is the canonical labeling if and only if
 $$
\textsf{b}(x,x*a) = \textsf{b}(x,xC(a))= \textsf{b}(x,x E_{a,a})
= \textsf{b}(x,a)^2 
$$ 
by (\ref{Im Eab}).~This is   the condition on 
a prequasifield appearing in \cite[(2.15)]{KW}.

\end{definition}
%%%%%%%%%%%%%%%%%%%%%%%%%%%%%%%%%%%%%%%%%

\subsection{Projections and lifts with coordinates} 
\label{Projections and lifts with coordinates} 

We next coordinatize 
 projections and lifts 
(Definitions~\ref{DHO projections} and ~\ref{Orthogonal and symplectic spreads}).
We review    \cite{Ka1, Ka3,KW} using somewhat different notation.  We will assume 
for the remainder of Section~\ref{Coordinates and  symplectic spread-sets} that 
\emph{$n$ is odd}.  

\medskip
\noindent
(a) {\sc From Kerdock sets to symplectic spread-sets.} \ Let ${\bf O}$ be an orthogonal spread in $\overline{V}
=V^+(2n+2, q)$,  let $N$ be a nonsingular
point, and choose an ordered pair  $X,Y\in {\bf O}$.
The  identification 
\begin{itemize}
	\item  $\overline{V}= \overline{U}\oplus  \overline{U}$ where
	   $\overline{U}=V(n+1,q)$,
	
	\item $X= \overline{U}\oplus  0$,  \ $Y=0\oplus  \overline{U}$,
		
\end{itemize}
produces a  Kerdock set
$\KK $
such that
each member of  ${\bf O}-\{Y\}$ has the form
$V(L)=\{ (x,xL)\,| \, x \in \overline{U}\}$, $L\in \KK $.
Moreover,  this identification induces a symmetric, nondegenerate 
bilinear form
 $\overline{\textsf{b}}(\cdot,\cdot)$ on $\overline{U}$ such that
the quadratic form $Q$ is defined by   $Q\big ((x,y)\big)=
\overline{\textsf{b}}(x,y)$.
Given this Kerdock set, we make the special choice  
$$
N=\langle (w,w)\rangle \  \ {\rm with } \  \  \overline{\textsf{b}}(w,w)=1.
$$
Then $(x,xL)$ lies in $N^\perp$ if and only if  
$\overline{\textsf{b}}(w,x)=\overline{\textsf{b}}(w,xL)$.
Set $U=\langle w\rangle ^\perp$ and write
$x\in \overline{U}$ as $x= \alpha w+u$, $\alpha \in \FFF _q$, $u\in U$.
As $L$ is skew-symmetric, $wL\in U$ and
$$
\alpha =\overline{\textsf{b}}(w,x)=\overline{\textsf{b}}(w,xL)=
\overline{\textsf{b}}(wL,u).
$$
Also,
$$
uL=uL\pi_U+ \overline{\textsf{b}}(wL,u)w,
$$
where $\pi _U$ is the orthogonal projection 
$\overline U \to U$.
Since $U\oplus  U$ is a set of representatives for $N^\perp/N$
and as 
$(x,xL)=
(\overline{\textsf{b}}(wL,u)w+u,\overline{\textsf{b}}(wL,u)w +
\overline{\textsf{b}}(wL,u)wL+uL\pi _U)\equiv (u, \overline{\textsf{b}}(wL,u)wL+uL\pi _U)$~(mod $N$),  
$$
\mbox{$
\{ L\pi _U +E_{wL,wL} \,| \,  L\in \KK  \}$ \ \em
is a spread-set of the symplectic spread $\,{\bf O}/N$.
}
$$

\noindent
(b) {\sc From Kerdock sets to DHO-sets.}
We keep the notation from (a) using $q=2$.
We use
  $X\in {\bf O}-\{Y\}$
  and   the singular point $P=\langle (0,w)\rangle \subseteq Y.$ 
We use the above identifications
for $\overline{V}$, $X$, $Y$ and $Q$.
A typical element in $V(L)\cap P^\perp$
has the form $(u,uL)=(u, uL\pi _U+\textsf{b}(wL,u)w)
\equiv (u, uL\pi _U)$~(mod $P$), $u\in U$.
As   
 $U\oplus  U\simeq P^\perp/P$, we see that
  $$
\mbox{$\{ L \pi _U  \,| \, L\in \KK  \}
$ \ \em
is a {\em DHO}-set of the orthogonal \DHO  $\,{\bf O}/P$.}
$$
\noindent
(c) {\sc From symplectic spread-sets to Kerdock sets.}
Let ${\bf S}$ be a symplectic spread on $V=V(2n,q)$, 
and let $X,Y\in {\bf S}$.
This time we identify
\begin{itemize}
	\item $V=U\oplus  U$, $U=V(n,q)$,
	\item $X=U\oplus  0$, $Y=0\oplus  U$, and 
	\item The bilinear form is 
	$\big ((x,y),(x',y')\big)=\textsf{b}(x,y')+\textsf{b}(y,x')$ 
	for a nondegenerate symmetric bilinear form   $\textsf{b}$
	on $U$.
\end{itemize}
Let $\Sigma\subseteq {\rm End}(U)$ be  the  resulting spread-set and  $C=\LLL(\Sigma)$ (cf.
Definition~\ref{canonical}).  
Set $\overline{U}=\FFF_q\oplus  U$ and 
$\overline{V}=\overline{U}\oplus  \overline{U}$, and define a
 quadratic form $Q$ on $\overline{V}$ by
$$
Q(\alpha , x, \beta , y)=
\alpha \beta +\textsf{b}(x,y) .
$$
For $a\in U$ define the
skew-symmetric linear operator $D(a)$ on $\overline{U}$ by
$$
(\alpha ,x)D(a)=\big( \textsf{b}(x,a),\alpha a+x(C(a)+E_{a,a})\big ).
$$
Then
$\KK  =\{D(a) \,| \, a \in U\}$ is a Kerdock  set of the lift
${\bf O} $,  where
${\bf O}/N\simeq {\bf S}$ for the choice  $N=\langle (1,0,1,0)
\rangle$.

\begin{example} 
\label{CCKS}
\rm
We illustrate the above discussion   using matrices,
as in  \cite[Lemma 7.3]{CCKS}.
Let $\overline{U}=\FFF _q^{n+1}$ and $\overline{V}=
\overline{U}\oplus  \overline{U}$, equipped with the quadratic form
$Q(x,y)=x\cdot y$.
We will use the nonsingular point  $N=\langle (e_1,e_1 )\rangle$
and  the singular point    $P=\langle (0,e_1 )\rangle$ (where the $e_i$ are the standard basis vectors of $\overline{U}$).
Then  the bilinear form $\textsf{b}$ is
 the usual dot product on $U:= \langle e_2, \ldots, e_{n+1}\rangle $.

Let ${\bf O}$ be an orthogonal spread containing $X$ and $Y$ (defined  above).
Then a Kerdock set  can be written $\KK  =\{ D(u) \,| \, u\in U\}$
using $(n+1)\times (n+1)$ skew-symmetric matrices 
$$
D(u)=\begin{pmatrix} 
        0 & x(u) \\
        x(u)^t & A(u)\\
     \end{pmatrix},
     $$
where
 $A(u)$ is  an $n\times n$ skew-symmetric matrix  and
 $x(u)\in U$ is  a row matrix.   Then
\begin{equation}
\label{Psi}
\Delta := \{ A(u)  \,| \, u \in U\}
\end{equation}
is a DHO-set of ${\bf O}/P$,  while
$$
\Sigma :=  \{ A(u)  + x(u)^tx(u)\,| \, u \in U\},
$$
is a   spread-set
of the symplectic spread ${\bf O}/N$,
where   $x(u)^tx(u)$ represents the previous
  rank 1 operator $E_{wL,wL}$ in (a).

\end{example}

%%%%%%%%%%%%%%%%%

\subsection{Shadows, twists and dilations} 
 \label{Shadows} 
\begin{theorem}
\label{Algebraic}

Let  $\Sigma $ be a spread-set of
self-adjoint operators of  $U=V(n,2)$  and $C=\LLL(\Sigma)$.
Then $\Delta = \Delta _\Sigma =\{ B(a)=C(a)+E_{a,a}\,| \,  a\in U \}$
is a \DHO\!\!-set of skew-symmetric operators.
\end{theorem}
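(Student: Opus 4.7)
My plan is to deduce Theorem~\ref{Algebraic} from Theorem~\ref{ProjectionDHO} via the coordinate formulas of Section~\ref{Projections and lifts with coordinates}, thereby realizing $\Delta_\Sigma$ as the DHO-set of an orthogonal DHO obtained by projecting an orthogonal spread in $V^+(2n+2,2)$ from a singular point. This avoids a direct verification of the conditions in Lemma~\ref{Coord}(c).

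First I would set $\overline U := \FFF_2\oplus U$ with the bilinear form $\overline{\textsf{b}}((\alpha,x),(\beta,y)) = \alpha\beta + \textsf{b}(x,y)$, and for each $a\in U$ define $D(a) \in {\rm End}(\overline U)$ by the recipe of Section~\ref{Projections and lifts with coordinates}(c), namely
$$
(\alpha,x)D(a) = \bigl(\textsf{b}(x,a),\; \alpha a + xB(a)\bigr),
$$
where $B(a) = C(a) + E_{a,a}$. A short computation, using $\textsf{b}(x,xB(a))=0$ (skew-symmetry of $B(a)$), shows that each $D(a)$ is skew-symmetric with respect to $\overline{\textsf{b}}$.

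The crux of the argument is to establish that $\KK := \{D(a) : a\in U\}$ is a Kerdock set---equivalently, by Lemma~\ref{Coord}(b), that $D(a)+D(b)$ is invertible for all $a\ne b$, which produces an orthogonal spread ${\bf O}$ in $V^+(2n+2,2)$. The equation $(\alpha,x)(D(a)+D(b))=0$ decomposes into $\textsf{b}(x,a+b)=0$ and $\alpha(a+b)+x(B(a)+B(b))=0$, so I would need to verify that $\ker(B(a)+B(b))$ is $1$-dimensional and meets $\langle a+b\rangle^\perp$ trivially. Setting $T := C(a)+C(b)$ (self-adjoint and invertible by the spread-set property), Lemma~\ref{SelfAdjoint}(c) gives that $T+E_{a+b,a+b}$ is skew-symmetric, so Lemma~\ref{SelfAdjoint}(d) yields $\textsf{b}(a+b,(a+b)T^{-1})=1$ and hence, via self-adjointness of $T^{-1}$, the key identity $\textsf{b}(aT^{-1},a) + \textsf{b}(bT^{-1},b) = 1$. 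Rewriting $B(a)+B(b) = (T+E_{a+b,a+b}) + (E_{a,b}+E_{b,a})$ and solving $yR=0$ confines $\ker(B(a)+B(b))$ to $\langle aT^{-1}, bT^{-1}\rangle$; the identity above, combined with $R$ being skew-symmetric (hence of rank $\leq n-1$, so the kernel is nontrivial), forces exactly one of $aT^{-1}, bT^{-1}$ into the kernel and keeps this generator outside $\langle a+b\rangle^\perp$.

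Once $\KK$ is known to be a Kerdock set, the associated orthogonal spread ${\bf O}$ admits the singular point $P=\langle(0,w)\rangle$ with $w=(1,0)\in\overline U$ (so $\overline{\textsf{b}}(w,w)=1$), and Theorem~\ref{ProjectionDHO} produces the orthogonal DHO ${\bf O}/P$ in $V^+(2n,2)$. The coordinate identification of Section~\ref{Projections and lifts with coordinates}(b) identifies the DHO-set of ${\bf O}/P$ as $\{D(a)|_U \pi_U : a\in U\}$, and the explicit computation $(0,x)D(a) = (\textsf{b}(x,a),\, xB(a))$ followed by $\pi_U$ reduces $D(a)|_U \pi_U$ to $B(a)$, so the DHO-set is exactly $\Delta_\Sigma$. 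The main obstacle is the Kerdock verification in the previous paragraph: the kernel analysis of $B(a)+B(b)$ pivots on Lemma~\ref{SelfAdjoint}(d), whose nondegeneracy statement is precisely what forces $\ker(B(a)+B(b))$ to be one-dimensional and to stay off $\langle a+b\rangle^\perp$.
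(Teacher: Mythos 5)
Your proposal is correct, and in outline it coincides with the first of the two arguments the paper sketches for Theorem~\ref{Algebraic} (the ``geometric approach'': lift $\Sigma$ to the Kerdock set $\KK=\{D(a)\}$ of Section~\ref{Projections and lifts with coordinates}(c), then project from a singular point of $Y$ as in Section~\ref{Projections and lifts with coordinates}(b)). Where you differ is in the division of labor: the paper inherits the Kerdock property of $\KK$ from the lift construction of Definition~\ref{Orthogonal and symplectic spreads}, i.e.\ from \cite{Ka1,KW}, whereas you prove it from scratch, and your verification is sound --- with $T=C(a)+C(b)$, writing $z=\epsilon\, aT^{-1}+\delta\, bT^{-1}$ for $z\in\ker(B(a)+B(b))$ with $\epsilon=\textsf{b}(z,a)$, $\delta=\textsf{b}(z,b)$, the identity $\textsf{b}(aT^{-1},a)+\textsf{b}(bT^{-1},b)=1$ coming from Lemma~\ref{SelfAdjoint}(c),(d) excludes $(\epsilon,\delta)=(1,1)$, and nontriviality of the kernel (even rank $\geq n-2$ with $n$ odd) then forces $z$ to be exactly one of $aT^{-1},bT^{-1}$, which satisfies $\textsf{b}(z,a+b)=\epsilon+\delta=1$. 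One small point you should make explicit: your two stated conditions also dispose of the case $\alpha=1$ in $(\alpha,x)(D(a)+D(b))=0$, because $B(a)+B(b)$ is self-adjoint, so its image is $\bigl(\ker(B(a)+B(b))\bigr)^\perp$, which does not contain $a+b$. By comparison, the paper's second (``algebraic'') argument verifies Lemma~\ref{Coord}(c) directly in ${\rm End}(U)$ using three operators $B(a),B(b),B(c)$, and thereby avoids the $(2n+2)$-dimensional detour and the invertibility of $D(a)+D(b)$ altogether; your route buys a self-contained derivation of the lift that the paper outsources to the literature, while the paper's algebraic route buys brevity and never leaves rank $n$.
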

 %%%%%%%%%%%%%%%%%%%%%%%%%%%%%%%%%%%%%%%%%%%%%%%%%%%%%%
 
 \begin{proof}
 We sketch two different arguments.

 {\sc Geometric approach.}  Start with a symplectic spread-set $\Sigma$ and
 $C=\LLL (\Sigma)$, and produce a Kerdock set $\KK $
 using Section~\ref{Projections and lifts with coordinates}(c). Then apply Section~\ref{Projections and lifts with coordinates}(b) to $\KK $  using
 the singular point  $P=\langle (0,0,1,0)\rangle$.
 
 {\sc Algebraic approach.}  We will verify the conditions in  Lemma~\ref{Coord}(c).  Consider distinct  $a,b,c\in U$.
Then skew-symmetric operator   
$B(a)+B(b)=C(a)+C(b) +E_{a,a}+E_{b,b}$ has even rank at least $n-2$,
and  hence has rank    $n-1$.
  
Let $x\ne0$ with  $x(B(a)+B(b))=x(B(a)+B(c))=0$.  Then $0\ne 
x(C(a)+C(b))=\textsf{b} (a,x)a+\textsf{b} (b,x)b$, so that 
$\textsf{b} (a,x)$ or  $\textsf{b} (b,x)\ne0$.
We cannot have $\textsf{b} (a,x) =\textsf{b} (b,x)=1$,
as otherwise $\textsf{b} (a+b,x) =0$
would contradict Lemma~\ref{SelfAdjoint}(d)
(since  $C(a)+C(b)+E_{a+b,a+b}$ is skew-symmetric by 
Lemma~\ref{SelfAdjoint}(c)).

Then  $\textsf{b} (a,x)\neq \textsf{b} (b,x)$. By symmetry,
  it follows that
$\textsf{b} (a,x),$ 
$\textsf{b} (b,x)$  and  
$\textsf{b} (c,x)$
are distinct members of $\FFF_2$, a contradiction. 
\end{proof}

\remark[{{\em Constructing {\rm DHO}-sets using orthogonal spreads}}]
\label{Constructing DHO-sets using orthogonal spreads}
 Example~\ref{CCKS}  contains the  construction
of the above set of operators using \cite[(7.4)]{CCKS} 
in terms of matrices
 (compare Remark~\ref{CCKS skew-symmetric construction}).
  However, the preceding theorem shows that we can proceed directly from  spread-sets to the required DHO-sets. 
 
 The examples studied in Sections~\ref{Sec 4} and \ref{Sec 5} 
 are obtained by taking known orthogonal spreads with ``nice'' descriptions
 in terms of matrices or linear operators and peeling off the set $\Delta$ in (\ref{Psi}).   Of course, there is a bias here:   orthogonal spreads having  nice descriptions  will  have less nice descriptions using arbitrary choices of its members $X,Y$  (as we will see in Example~\ref{transitive DHOs} below).

\begin{definition}[{{\em  Shadows}}]
\label{shadow}
Let   
$\Sigma $ be a spread-set of
self-adjoint operators  of $U$
 coordinatizing the symplectic spread ${\bf S}$
 of  $V=V(2n,2)$ with respect 
to the pair $(X,Y)$.
Let $Q$ be the unique quadratic form on $V$ polarizing
to the given symplectic form such that  $X$ and $Y$ are
totally singular.
 The DHO-set  $\Delta=\Delta _\Sigma$  associated to $\Sigma$ in 
    Proposition~\ref{Algebraic} will be called 
 the \emph{shadow of $\Sigma$}; it is uniquely determined by the spread-set. We also call the orthogonal
DHO on $(V,Q)$ defined by $\Delta$ a \emph{shadow} of 
the spread
${\bf S}$.
(Recall that this is not uniquely determined:  we choose $X$ and $Y$ in  order  to obtain the spread-set $\Sigma$ from the spread  ${\bf S}$. 
 Also see Section~\ref{The projections}.)
\end{definition}  
 
\begin{example} 
\label{Field} 
Consider $F=\FFF_{2^n}$ as an $\FFF_2$-space equipped with
the absolute  trace 
form  Tr   as a nondegenerate symmetric form.
Define  the  $\FFF _2$-linear map
$C(a), a\in F,$ by
$$
xC(a)= a^2x.
$$
Then $C$ is the canonical labeling (Definition~\ref{canonical})  of a symplectic spread-set
that coordinatizes the desarguesian plane.
The operators
$$
xB(a)= a^2 x+ {\rm Tr}(xa)a
$$
define the shadow
$\Delta =\{ B(a) \,| \, a \in F\}$ of $\Sigma$. 
In particular
$xE_{a,a}={\rm Tr}(xa)a$.
The automorphism group of  the  corresponding DHO is isomorphic to $F^\star \cdot {\rm Aut}(F)$
by Lemma~\ref{AutNearFlag} 
below.

\end{example}

Our later Examples~\ref{ShadowSemifield}  and 
\ref{ShadowNearFlag} 
are generalizations of this one.
We close this section
with a result obtaining new symplectic spreads from known ones.

\begin{theorem}
\label{Clone}

 Let 
$\Sigma$   be a  spread-set of self-adjoint operators of $U=V(n,q),$ and let
 $C=\LLL(\Sigma)$.
\begin{enumerate}
	\item[\rm(a)] 
	If  $u\in U,$ define $C_u \col  U\to {\rm End}(U)$ by
$$
C_u(a):= C(a)+E_{a,u}+E_{u,a}.
$$
Then 
$\Sigma _{u}:=\{ C_u(a)\,| \, a\in U\}$ is a spread-set of self-adjoint operators
and
 $C_u=\LLL(\Sigma_u)$.
 Moreover,  $\Sigma _{u}$ is additively closed if $\Sigma $ is.

\item[\rm(b)] Pick $1\neq \lambda \in \FFF _q$ and define
$C^\lambda \col U  \to {\rm End}(U)$ by
$$
C^\lambda(a):=C\big( (1+\lambda )a\big)+E_{\lambda a,\lambda a}.
$$
 Then 
	$\Sigma ^\lambda =\{ C^\lambda( a) \,| \,  a\in U\}$
	is a spread-set of self-adjoint operators and $C^\lambda=\LLL(\Sigma^\lambda)$.

\end{enumerate}
 
\end{theorem}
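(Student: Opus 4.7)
The theorem requires, for each part, verifying (i) self-adjointness of the new operators, (ii) the canonical-labeling identity ($C_u(a)+E_{a,a}$ resp.\ $C^\lambda(b)+E_{b,b}$ skew-symmetric), (iii) the spread-set property (pairwise sums of distinct elements are invertible), and (iv) in (a), preservation of additive closure. Items (i), (ii), and (iv) are routine: self-adjointness holds because $E_{a,u}+E_{u,a}$ is an operator plus its adjoint and $E_{\lambda a,\lambda a}=E_{c,c}$ is visibly self-adjoint; the labeling identity in (a) follows from splitting $C_u(a)+E_{a,a}=[C(a)+E_{a,a}]+[E_{a,u}+E_{u,a}]$ with both summands skew-symmetric, and in (b) from the characteristic-$2$ identity $\lambda^2+(1+\lambda)^2=1$, which collapses $E_{\lambda a,\lambda a}+E_{(1+\lambda)a,(1+\lambda)a}$ to $E_{a,a}$; and additive closure in (a) is immediate from the bilinearity $E_{a,u}+E_{b,u}=E_{a+b,u}$.

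The main obstacle is (iii). In both parts I would set $M:=C(a)+C(b)$ (in (a)) or $M:=C(a)+C(a')$ (in (b)); this is self-adjoint and invertible as a sum of two distinct elements of the spread-set~$\Sigma$. The pairwise sum then takes the form $M+R$, with $R$ a self-adjoint rank-$\le 2$ perturbation, namely $R=E_{v,u}+E_{u,v}$ with $v:=a+b$ in (a), and $R=\lambda^2(E_{a,a}+E_{a',a'})$ in (b). Assuming a nonzero kernel vector $x$, the equation $xM=xR$ expresses $x$ as a linear combination of two explicit vectors involving $M^{-1}$; pairing against the two natural dual vectors through $\textsf{b}$ yields a homogeneous $2\times 2$ linear system in two scalars of the form $\textsf{b}(x,\cdot)$, and the whole matter reduces to showing that the determinant of this system is nonzero.

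The decisive input is Lemma~\ref{SelfAdjoint}. Because $C(a)+E_{a,a}$ and $C(b)+E_{b,b}$ are skew-symmetric by the canonical-labeling hypothesis, Lemma~\ref{SelfAdjoint}(c) shows that $M+E_{v,v}$ is skew-symmetric for $v=a+b$. Equation~\eqref{b(x,a)} applied to $M$ then gives $\textsf{b}(x,xM)=\textsf{b}(x,v)^2$ on all of $U$, and substituting $x\mapsto xM^{-1}$ (legal because $M^{-1}$ is self-adjoint) produces the dual formula $\textsf{b}(x,xM^{-1})=\textsf{b}(x,vM^{-1})^2$. Evaluating this at $x=v$ forces $s:=\textsf{b}(v,vM^{-1})$ to satisfy $s=s^2$, while Lemma~\ref{SelfAdjoint}(d) guarantees $s\ne 0$, so $s=1$. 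Evaluating at $x=u$ in (a) gives $\textsf{b}(u,uM^{-1})=p^2$ with $p:=\textsf{b}(u,vM^{-1})$; evaluating at $x=a$ and $x=a'$ in (b) gives the analogous identities $r=(r+p)^2$ and $r'=(p+r')^2$ for $r,r',p$ the obvious inner products.

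Plugging these identities into the $2\times 2$ determinant finishes both cases. In (a) it becomes $(1+p)^2+rs=1+p^2+p^2=1\ne 0$. In (b) it expands to $1+\lambda^2(r+r')+\lambda^4(rr'+p^2)$; the squared relations yield $r+r'=s=1$ and $rr'=r(1+r)=p^2$, so the expression collapses to $1+\lambda^2$, which is nonzero precisely because $\lambda\ne 1$. Invertibility is established, $|\Sigma_u|=|\Sigma^\lambda|=q^n$ and distinctness follow automatically from (iii), and the canonical-labeling identities of (ii) then give $C_u=\LLL(\Sigma_u)$ and $C^\lambda=\LLL(\Sigma^\lambda)$.
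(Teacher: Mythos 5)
Your proof is correct, and it is genuinely more explicit than what the paper does. The paper's official proof of Theorem~\ref{Clone} is essentially a citation: it observes that $\Sigma$, $\Sigma_u$ and $\Sigma^\lambda$ are reformulations of special cases of \cite[Lemma 2.18]{KW} (geometrically, all three are projections of one orthogonal spread from different nonsingular points), and explicitly leaves the ``easy, direct algebraic verification --- similar to the proof of Lemma~\ref{Algebraic}'' to the reader. You have supplied that verification in full, and your mechanism for the one nontrivial point (invertibility of $C_u(a)+C_u(b)$ and of $C^\lambda(b)+C^\lambda(b')$) is sound: writing the sum as $M+R$ with $M\in\Sigma+\Sigma$ invertible and $R$ of rank $\le 2$, reducing a putative kernel vector to a homogeneous $2\times2$ system, and then killing the determinant via the identity $\textsf{b}(x,xM^{-1})=\textsf{b}(x,vM^{-1})^2$ (obtained by substituting $x\mapsto xM^{-1}$ into \eqref{b(x,a)} for $M+E_{v,v}$ skew-symmetric, $v$ the sum of the two labels) together with Lemma~\ref{SelfAdjoint}(d) to force $\textsf{b}(v,vM^{-1})=1$. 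I checked the two determinant evaluations: $(1+p)^2+rs=1$ in (a), and $1+\lambda^2(r+r')+\lambda^4(rr'+p^2)=1+\lambda^2=(1+\lambda)^2\ne 0$ in (b), using $r+r'=1$ and $rr'=p^2$; both are right, and the argument is uniform in whether the auxiliary vectors are linearly independent. What the paper's route buys is conceptual economy (the three spread-sets are literally the same orthogonal spread viewed from three points, which is recorded in the remark following the theorem); what your route buys is a self-contained, reference-free proof whose style of rank-$\le 2$ perturbation analysis differs somewhat from the rank-parity argument of Theorem~\ref{Algebraic} that the paper points to, but uses exactly the lemmas (\ref{SelfAdjoint}, \eqref{Im Eab}, \ref{Labeling}(b)) the authors name.
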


\begin{proof} 
This is a reformulation of
special cases of  \cite[Lemma 2.18]{KW}
using  Lemma \ref{SelfAdjoint}, 
  (\ref{Im Eab}) and Lemma~\ref{Labeling}(b).
(The easy,  direct algebraic verification -- similar to the proof of 
Theorem~\ref{Algebraic}
--
is left to the reader.)
\end{proof}

\begin{remark} 
In view of   \cite[Lemma 2.18]{KW},  
$\Sigma$, $\Sigma_u$   and $\Sigma^\lambda$  
are all projections of the same orthogonal spread
(cf.  Definition~\ref{Orthogonal and symplectic spreads}).

\end{remark}
%%%%%%%%%%%%%%
\begin{definition}[{{\em Twists and dilations}}]
\label{Clones and shifts}

Let $\Sigma $  be
a symplectic spread-set of $U=V(n,q)$, $q$ even.
For $u\in U$ and $1\neq \lambda \in \FFF_q$
 we call the spread-set  $\Sigma _{u}$ in Theorem~\ref{Clone}(a)   the \emph{$u$-twist of
$\Sigma$}, and
the spread-set
 $\Sigma ^\lambda$ in Theorem~\ref{Clone}(b)
the \emph{$\lambda$-dilation of $\Sigma$}.
\end{definition}

\begin{corollary}
\label{Linear2}
In the notation of {\rm Theorem~\ref{Clone}(a),} assume that
 $q=2,$ $\Sigma$ is additively closed  and  $u\in U$.  Let $\Delta= \{ B(a):=C(a)+ E_{a,a} \,| \, a \in U\}$
 and $\Delta_u= \{ B_u(a):=C_u(a)+ E_{a,a} \,| \, a \in U\}$
 be the shadows of $\Sigma$ and $\Sigma_u$.  
 Then
$B_u(a)=B(a+u)+B(u)$.
\end{corollary}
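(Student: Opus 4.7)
The plan is to simply expand both sides using the definitions and exploit three ingredients already in hand: the additivity of the canonical labeling $C$ (provided by Lemma~\ref{Labeling}(b), since $\Sigma$ is additively closed), the defining formula $C_u(a)=C(a)+E_{a,u}+E_{u,a}$ from Theorem~\ref{Clone}(a), and the bilinearity of $E_{\cdot,\cdot}$ in both arguments, which is immediate from \eqref{Im Eab}.

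First I would unfold the right hand side:
$$
B(a+u)+B(u)=C(a+u)+E_{a+u,a+u}+C(u)+E_{u,u}.
$$
By additivity of $C$ the two copies of $C(u)$ cancel (we are in characteristic $2$), leaving $C(a)+E_{a+u,a+u}+E_{u,u}$.

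Next I would expand $E_{a+u,a+u}$. Since $xE_{a+u,a+u}=\textsf{b}(x,a+u)(a+u)$ expands linearly in each slot, one obtains
$$
E_{a+u,a+u}=E_{a,a}+E_{a,u}+E_{u,a}+E_{u,u}.
$$
Substituting this and noting that the two $E_{u,u}$ terms cancel yields
$$
B(a+u)+B(u)=C(a)+E_{a,u}+E_{u,a}+E_{a,a}=C_u(a)+E_{a,a}=B_u(a),
$$
which is the desired identity. There is no real obstacle here; the argument is a short chain of definitional rewrites, the only nontrivial point being that $\Sigma$'s additive closure is exactly what is needed to invoke Lemma~\ref{Labeling}(b) so that $C(a+u)=C(a)+C(u)$.
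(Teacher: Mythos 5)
Your proof is correct and is essentially the paper's own argument run in the opposite direction: both rest on the additivity of $C$ (Lemma~\ref{Labeling}(b)) together with the bilinear expansion $E_{a+u,a+u}=E_{a,a}+E_{a,u}+E_{u,a}+E_{u,u}$ in characteristic $2$. No issues.
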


\begin{proof} 
By Definition~\ref{shadow} and Theorem~\ref{Clone},   \begin{eqnarray*}
B_u(a) \hspace{-6pt} &=\hspace{-6pt}&C_u(a)+E_{a,a} \\
 \hspace{-6pt} &=\hspace{-6pt}& C(a) +E_{a,u} +E_{u,a}+E_{a,a} \\
\hspace{-6pt} &=\hspace{-6pt}& C(a+u) +E_{a+u,a+u}+C(u) +E_{u,u}  
 \\
\hspace{-6pt} &=\hspace{-6pt}& B(a+u) +B(u)   . \qedhere
\end{eqnarray*}
\end{proof}

%%%%%%%%%%%%%%%%%%%%%%%%%%%%%%%%%%%%%%%%%%%

\subsection{The projections ${\bf O}/N$ and ${\bf O}/P$}
\label{The projections} 
The term ``shadow'' of a symplectic spread suggests that, as in the physical world, 
the original object cannot be recovered from the shadow.  
We will see  how this occurs in our context:
the relationship between  symplectic spreads
and shadows is less tight
than visible in the preceding section.
 This is illustrated by Example~\ref{Connection}
below. 
We will see that  non-isomorphic spread-sets can produce isomorphic shadows,
a symplectic spread 
 can have non-isomorphic shadows,
and  the automorphism groups of a symplectic spread and a shadow
can be very different. These phenomena are best understood from the
viewpoint of orthogonal spreads:

\begin{proposition}
\label{Hyperbolic}
Let ${\bf O}$ be an orthogonal spread in $\overline{V}=V^+(2n+2,2)$.
Let $N$ be a nonsingular point  and $P$ a singular point in $\overline{V}$
such that the $2$-space $\langle N,P\rangle$ is hyperbolic.
Then the {\rm DHO} ${\bf O}/P$ is a shadow of the symplectic spread ${\bf O}/N$.

\end{proposition}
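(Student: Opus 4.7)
The plan is to apply the explicit coordinate descriptions of Section~\ref{Projections and lifts with coordinates}(a) and (b) to a common Kerdock set $\KK $, and to show that the resulting symplectic spread-set $\Sigma$ of ${\bf O}/N$ and {\rm DHO}-set $\Delta$ of ${\bf O}/P$ satisfy $\Delta = \Delta_\Sigma$. The essential preliminary is to arrange coordinates in which both $N$ and $P$ simultaneously assume the normal forms $\langle(w,w)\rangle$ and $\langle(0,w)\rangle$ for one and the same vector $w\in \overline U$, with $\overline{\textsf{b}}(w,w)=1$.

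To set up such coordinates, let $Y\in {\bf O}$ be the unique spread member containing $P$. Since $\langle N,P\rangle$ is hyperbolic over $\FFF_2$, it contains a second singular point $P' = \langle v_N+v_P\rangle \ne P$; since $v_N\notin Y$ (as $Y$ is totally singular while $v_N$ is non-singular), $P'\not\subseteq Y$, so the unique $X\in {\bf O}$ containing $P'$ satisfies $X\ne Y$. Pick bases $f_1,\dots,f_{n+1}$ of $Y$ and $e_1,\dots,e_{n+1}$ of $X$ with $f_1=v_P$ and $e_1=v_N+v_P$, and use them to identify $X=\overline U=Y$. Putting $w=(1,0,\dots,0)$, one then has $v_P=(0,w)$, $v_N=(w,w)$, and
$$\overline{\textsf{b}}(w,w)=(e_1,f_1)=(v_N+v_P,v_P)=(v_N,v_P)=1,$$
since the hyperbolic condition on $\langle N,P\rangle$ in characteristic $2$ is precisely $(v_N,v_P)=1$.

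With this in hand, the Kerdock set $\KK $ of ${\bf O}$ relative to $(X,Y)$ yields by 3.2(a) a spread-set $\Sigma=\{L\pi_U+E_{wL,wL} \,|\, L\in\KK \}$ of ${\bf O}/N$, and by 3.2(b) a {\rm DHO}-set $\Delta=\{L\pi_U \,|\, L\in\KK \}$ of ${\bf O}/P$, both living in ${\rm End}(U)$ for $U=\langle w\rangle^\perp$. The map $\KK \to U$, $L\mapsto wL$, is a bijection: it is injective because $L+L'$ is invertible on $\overline U$ for distinct $L,L'\in\KK $ (so $w(L+L')\ne 0$), and $|\KK |=2^n=|U|$. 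Let $L(a)\in\KK $ be the preimage of $a\in U$ and write $C(a):=L(a)\pi_U+E_{a,a}$. Then $C(a)+E_{a,a}=L(a)\pi_U$ is skew-symmetric on $U$ (as $L(a)$ is skew-symmetric on $\overline U$ and $U\subseteq\overline U$), so by the uniqueness in Lemma~\ref{Labeling}, $C=\LLL(\Sigma)$ is the canonical labeling of $\Sigma$.

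Consequently
$$\Delta_\Sigma=\{C(a)+E_{a,a} \,|\, a\in U\}=\{L\pi_U \,|\, L\in\KK \}=\Delta,$$
and the two induced $V^+(2n,2)$-structures, on $N^\perp/N$ (after imposing the quadratic form making $X$ and $Y$ totally singular, as in Definition~\ref{shadow}) and on $P^\perp/P$, both identify with $U\oplus U$ carrying $Q((x,y))=\textsf{b}(x,y)$. Therefore ${\bf O}/P$ coincides with the shadow of $\Sigma$, and in particular is a shadow of ${\bf O}/N$. I expect the main obstacle to be the coordinate setup in the second paragraph—choosing $X$ to be the spread member through the second singular point of the hyperbolic line $\langle N,P\rangle$ and verifying that the natural dual-basis identification produces $\overline{\textsf{b}}(w,w)=1$; once that is in place, the identity $\Delta=\Delta_\Sigma$ falls out from the uniqueness of the canonical labeling.
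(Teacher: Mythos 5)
Your proof is correct and follows essentially the same route as the paper: choose the second singular point $P'$ on the hyperbolic line $\langle N,P\rangle$, normalize coordinates so that $N=\langle(w,w)\rangle$ and $P=\langle(0,w)\rangle$ with $X,Y$ the spread members through $P',P$, and then read off from Section~\ref{Projections and lifts with coordinates}(a),(b) that the DHO-set of ${\bf O}/P$ is the shadow of the spread-set of ${\bf O}/N$. The paper simply compresses the final identification into a citation of Remark~\ref{Constructing DHO-sets using orthogonal spreads} and Example~\ref{CCKS}, whereas you verify it directly via the uniqueness of the canonical labeling; both are fine.
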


\begin{proof} We will use the notation
in Section~\ref{Projections and lifts with coordinates}
for a suitable choice  of coordinates. By assumption,
 $\langle N,P\rangle$ contains
a singular point $P' \neq P $.  
We may assume that  
$P'=\langle (e_1,0)\rangle$ and $P=\langle (0,e_1)\rangle$,
so that $N= \langle (e_1,e_1)\rangle $. 
We may assume that the members of  {\bf O}
containing 
$P'$ and $P$ are 
$X= \overline U\oplus 0 $ and $ Y= 0\oplus\overline U  $,
respectively.
According to Remark~\ref{Constructing DHO-sets using orthogonal spreads} (compare Example~\ref{CCKS}), 
${\bf O}/P$ is a shadow of ${\bf O}/N$.%
\end{proof}

\begin{example} 
\label{Connection}
(a)
When the usual desarguesian spread {\bf S} of $V(2,q^n)$
(for~$q$ even and $n>1$ odd) 
is viewed as a symplectic spread of $V(2n,q)$, it can be lifted to the {\em desarguesian orthogonal spread}  {\bf O}
of
$\overline V=V^+(2n+2,q)$ as~in 
Definition~\ref{Orthogonal and symplectic spreads}.
Then ${\bf O}/N_0={\bf S}$ for a nonsingular point $N_0$.
The group $G={\rm SL}(2,q^n) \cdot {\rm Aut}(\FFF_{q^n})$
preserves  the point $N_0$,  the orthogonal spread 
{\bf O} and  the   orthogonal geometry of $\overline V$.  It has exactly  two orbits of singular points; the various orbits of nonsingular points $N$ are described at length in \cite[Sec.~4]{Ka1}.
If $N\ne N_0$ then $\langle N^G\rangle$  is a $G$-invariant subspace  
$\ne 0, N_0$,
 and hence   is $N_0^\perp$ or $\overline V$.  

If $P$ is a singular point, then $P^\perp\ne N_0^\perp$.
Thus,  $P$ is not perpendicular to some member $N'$  of $ N^G $, in which case $\langle N', P\rangle $
is a hyperbolic 2-space.
\smallskip

(b) 
In particular, when $q=2$, by the preceding proposition {\em each ${\bf O}/P$ is isomorphic to a shadow of each 
${\bf O}/N$},  $N\ne N_0,$  where there are many non-isomorphic symplectic spreads ${\bf O}/N$
 \cite[Cor.~3.6 and Sec.~4]{Ka1}.
Also, ${\bf O}/P$ is a shadow of the desarguesian spread
${\bf O}/N_0={\bf S}$ when $P$ is not in $N_0^\perp$.

If $q=2$ and $n=5$,  then $G$ has precisely three orbits of
nonsingular points: $\{N_0\}$,  $N_1^G$, and $N_2^G$,
with $N_1^G\subseteq N_0^\perp$ and $N_2^G\cap N_0^\perp =\emptyset$.
Here
${\bf O}/N_1$ is a semifield spread with $|{\rm Aut}({\bf O}/N_1)|=2^5\cdot 5$,
and ${\bf O}/N_2$ is a flag-transitive spread with 
$|{\rm Aut}({\bf O}/N_2)|=33\cdot 5$.  The  two
orbits of $G$ on singular points are $P_0^{ G}$ (inside $  N_0^\perp$) and
$P_1^{ G}$ (with $P_1^{ G}\cap N_0^\perp =\emptyset$).
The  DHO  ${\bf O}/P_1$ appeared in Example~\ref{Field},  while
  ${\bf O}/P_0$  is one of the DHOs
in Example~\ref{transitive DHOs}.
By Example~\ref{Field},
  ${\rm Aut}({\bf O}/P_1)={ G}_{P_1} $  has  order $31\cdot 5$,
while 
 ${  G}_{P_0}$ induces on the DHO
${\bf O}/P_0$ an automorphism group of order $2^5\cdot 5$.
Thus, ${\bf O}/P_0\not\simeq {\bf O}/P_1$.
Use of a computer shows that ${\rm Aut}({\bf O}/P_0)={ G}_{P_0} $.

~

\end{example}

%%%%%%%%%%%%%%%%%%%%%%%%%%%%%%%%

%%%%%%%%%%%%%%%%%%%%%%%%%%%%%%%%
%%%%%%%%%%%%%%%%%%%%%%%%%%%%%%%%
%%%%%%%%%%%%%%%%%%%%%%%%%%%%%%%%

\section{Proof of Theorem~\ref{ShadowSemifieldKW}}
\label{Sec 4}
Except in Section~\ref{qDHOs}, we will use 
$F=\FFF _{2^n}$
with $n>1$   odd,  viewed as an $\FFF_2$-space  equipped
with the nondegenerate, symmetric bilinear form
$(x,y)\mapsto {\rm Tr}(xy)$ 
using  the absolute trace ${\rm Tr} \col  F\to \FFF_2$ 
as in Remark~\ref{Product}.

\Notation 
We 
will use the following$:$

\begin{itemize}
\item {}
The quadratic form
$Q$ on $V=F\oplus  F$ defined  by $Q(x,y)={\rm Tr}(xy);$
\item {}
  The trace map  ${\rm Tr}_{d:e} \col  \FFF_{2^d}\to \FFF_{2^e}$
  when $ \FFF_{2^d}\supset  \FFF_{2^e},$
  so that ${\rm Tr}_{n:1}={\rm Tr};$

	\item {}  Sequences $\underline{d}=(d_0=1,d_1, \ldots , d_m)$	of  $| \underline{d} |=m+1$ different integers such that 
	 $ d_1| d_2 | \cdots | d_m | n$, 
	associated with a chain
	 $\FFF_2 =F_0\subset F_1 \subset \cdots \subset F_m\subset F,$
	 $|F_i| ={2 ^{d_i}}$,
	 of     $| \underline{d} |  $  proper   subfields of $F;$    
	\item   
	The   $F_i$-linear operator
\begin{equation}
\label{Eab}
E_{a,b}^{(i)}\colon x\mapsto {\rm Tr}_{n:d_i}(ax)b
\end{equation}
	on $F$  for $a,b\in F$ and   $0\leq i \leq m;$	and
 	\item  Sequences $\underline{c}=(c_1, \ldots ,c_m), c_i\in F$.
\end{itemize}

This section  is concerned with
 the following  symplectic semifield spread-sets:
\begin{example}
\cite{KW}\,
 %[Kantor-Williams \cite{KW}]
\label{ShadowSemifield}
Let $\underline{d} $ and $\underline{c}$
be as above
with  all $c_i\in F^\star$.
For $a\in F$ define the operator
$C(a)$ on $F$ by
$$
C(a)= a^2{\bf 1}+ \sum _{i=1}^{m} (E_{c_i,a}^{(i)}+E_{a,c_i}^{(i)}).
$$
This defines a symplectic spread-set $\Sigma$. 
Moreover, $C=\LLL(\Sigma)$ by Example~\ref{Field}
since the operators $E_{c_i,a}^{(i)}+E_{a,c_i}^{(i)}$ are skew-symmetric.
The shadow of $\Sigma$ (Definition~\ref{shadow}) is 
$$
\mbox{$\Delta =\{B(a) \,| \, a\in F\}$ \ with \
$B(a)=C(a)+ E_{a,a}^{(0)}$.}
$$
The DHO-set $\Delta$ defines an orthogonal DHO of
$V$ by
$$\mbox{
${\bf D}\,=\, \{V(a)\,| \, a\in F\}$ 
\  with \ $ V(a)=V\big(B(a)\big):=\{ \big(x,xB(a)\big)\,| \, a\in F\}.$ }
$$

\end{example}

\begin{remark}
\label{ExampleCloning}
(a)
The preceding spread-set $\Sigma$ is obtained by successively twisting  the
desarguesian spread-set $\Sigma _0=\{ a^2{\bf 1} \,| \, a\in F\}$.
Namely,      view
 $\Sigma _0$ as a symplectic spread-set over $F_m$.
Let $d=d_m$ and $ c=c_m\in F^\star$.
By Theorem~\ref{Clone} the twist
 $\Sigma _1=\{ a^2{\bf 1}+ E_{c,a}^{(m)}+E_{a,c}^{(m)} \,| \, a\in F\}$
  is a symplectic spread-set over $F_m$.
Now view   $\Sigma _1$ as a spread-set over $F_{m-1}$
 and iterate the twisting using $c_{m-1}\in F^\star$.
 
 (b)  None of the nontrivial  elations
 of the projective plane arising  from the symplectic
spread-set is inherited by the shadow DHO since 
 $C(a+b)=C(a)+C(b)$ but $B(a+b)=C(a+b)+ E_{a+b,a+b}^{(0)}
\neq C(a)+  E_{a,a}^{(0)} +C(b) + E_{b,b}^{(0)} =B(a)+B(b)$ for $0\neq a,b$,
$a\neq b$. 
 
\end{remark}

Our goal is to   show that  we obtain
at least
   $2^{n(\rho (n)-2)}/n^2$ inequivalent orthogonal DHOs of the above type
when  $n$ is composite.
We start with a uniqueness result concerning  shadows:  

\begin{proposition}
\label{Linear1}
If $n>5$,
then a  \DHO\!\!-set   $\Delta 
\subseteq {\rm End}(U)$  can be the shadow
of at most one additively closed symplectic spread-set.

\end{proposition}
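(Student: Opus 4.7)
The plan is to suppose $\Delta$ is the shadow of two additively closed symplectic spread-sets $\Sigma, \Sigma' \subseteq \mathrm{End}(U)$, with canonical labelings $C=\LLL(\Sigma)$ and $C'=\LLL(\Sigma')$, both additive by Lemma~\ref{Labeling}(b), and to force $\Sigma = \Sigma'$. Since $\Delta = \{C(a)+E_{a,a}\,|\,a\in U\} = \{C'(a)+E_{a,a}\,|\,a\in U\}$ and both parametrizations are bijective (for $n>2$, the equality $C(a)+E_{a,a}=C(b)+E_{b,b}$ would force the rank-$n$ operator $C(a+b)$ to have rank at most $2$), there exists a bijection $\sigma \col U \to U$ with $\sigma(0)=0$ satisfying
\[
C(a)+E_{a,a} \;=\; C'(\sigma(a))+E_{\sigma(a),\sigma(a)} \qquad (a \in U).
\]
Proving $\sigma=\mathrm{id}$ yields $C=C'$ and completes the argument.

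The key step is to evaluate this identity at $a$, $b$ and $a+b$, use the additivity of $C$ and $C'$, and invoke the expansion $E_{a+b,a+b} = E_{a,a}+E_{b,b}+E_{a,b}+E_{b,a}$. A direct calculation yields
\[
C'\bigl(\sigma(a+b)+\sigma(a)+\sigma(b)\bigr) \;=\; E_{a,b}+E_{b,a}+E_{\sigma(a+b),\sigma(a+b)}+E_{\sigma(a),\sigma(a)}+E_{\sigma(b),\sigma(b)}.
\]
The right-hand side is a sum of at most five rank-$1$ operators, hence has rank at most $5$. The left-hand side is an element of $\Sigma'$, so is either $0$ or invertible of rank $n$. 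When $n>5$ the rank-$n$ alternative is excluded, so the left-hand side vanishes; that is, $\sigma$ is additive.

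With $\sigma$ additive the same identity, after using $E_{\sigma(a+b),\sigma(a+b)}=E_{\sigma(a),\sigma(a)}+E_{\sigma(b),\sigma(b)}+E_{\sigma(a),\sigma(b)}+E_{\sigma(b),\sigma(a)}$, collapses to
\[
E_{a,b}+E_{b,a} \;=\; E_{\sigma(a),\sigma(b)}+E_{\sigma(b),\sigma(a)} \qquad (a,b\in U).
\]
By non-degeneracy of the bilinear form on $U$, for linearly independent $a,b$ the images of the two sides are exactly $\langle a,b\rangle$ and $\langle\sigma(a),\sigma(b)\rangle$, respectively. Fixing $0\ne a$ and choosing $b_1,b_2$ with $\langle a,b_1\rangle\ne\langle a,b_2\rangle$ (available for $n\ge 3$) forces $\sigma(a)\in\langle a,b_1\rangle\cap\langle a,b_2\rangle=\langle a\rangle$, and over $\FFF_2$ this means $\sigma(a)=a$. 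Hence $\sigma=\mathrm{id}$ and $\Sigma = \Sigma'$.

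The principal obstacle, and the reason for the hypothesis $n>5$, is the rank count in the middle step: the right-hand side of the displayed identity has at most five rank-$1$ summands (two from $E_{a,b}+E_{b,a}$ and three from the $E_{\sigma(\cdot),\sigma(\cdot)}$ terms), so the rigid ``rank $0$ or rank $n$'' dichotomy on the left yields additivity of $\sigma$ only once $n$ exceeds $5$.
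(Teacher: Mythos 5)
Your proof is correct and follows essentially the same route as the paper's: define the relabeling bijection between the two parametrizations, force its additivity by playing the rank-$\le$-constant sum of rank-one operators against the ``rank $0$ or rank $n$'' dichotomy for elements of an additively closed spread-set, and then use $E_{a,b}+E_{b,a}=E_{\sigma(a),\sigma(b)}+E_{\sigma(b),\sigma(a)}$ together with the image computation to see that the relabeling fixes every $2$-space and hence is the identity. The only (cosmetic) difference is that you invoke the additivity of the canonical labeling $\LLL(\Sigma')$ from the outset, which collapses the paper's two-step additivity argument into one and gives a rank bound of $5$ rather than $6$.
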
 

\begin{proof} Let $\Delta =\{ B(a) \,| \, a\in U\}$
be the shadow of the  additively closed symplectic spread-sets 
$\Sigma$ and $\widetilde{\Sigma}$.
Write 
$\Sigma 
=\{ C(a):=B(a)+E_{a,a} \,| \, a\in U\}$
with $C=\LLL(\Sigma)$ additive (by Lemma~\ref{Labeling}(b)).
Then for each $B(a)\in \Delta$ there is a self-adjoint operator $E_{b,b}$ of rank $\leq 1$  such that $\overline{C}(a): =B(a)+ E_{b,b}\in \widetilde{\Sigma}$. Write $a'=b$.
We have to show that $a'=a$ for all $a$.
(N.\,B.--We do not know that $\overline{C}=\LLL(\widetilde\Sigma) $.)
 
We claim that $\overline C$ is additive.  
Let $0\neq a,b\in U$ and  $\overline{C}(a)+\overline{C}(b)=\overline{C}(c)$  with $c\in U$.
By the additivity of  $C$ and the definition of $\overline C$,
\begin{equation}
\label{C()}
\begin{array}{lllll}
C(a+b+c)\hspace{-6pt}&=\hspace{-6pt}& 
(B(a)+E_{a.a})+(B(b)+E_{b.b})+(B(c)+E_{c.c})
\vspace{2pt}
 \\
\hspace{-6pt}&=\hspace{-6pt}& 
E_{a',a'}+E_{b',b'}+E_{c',c'} + E_{a,a}+  E_{b,b}+  E_{c,c} .\end{array}
\end{equation}
Then $c=a+b$, as otherwise the rank of the above left side is $n$ and of the right side is
$\leq 6$.
Thus,  $\overline{C}$ is additive. 

Since
$\overline{C}(a) + E_{a',a'}$ and $\overline{C}(b) + E_{b',b'}$
are skew-symmetric, by  Lemma~\ref{SelfAdjoint}(c) 
$$
\overline{C}(a) + \overline{C}(b) + E_{a'+b',a'+b'}
=\overline{C}(a+b)  + E_{a'+b',a'+b'}
$$ 
is also skew-symmetric.
Since $\overline{C}(a+b)  + E_{(a+b)',(a+b)'}$ is skew-symmetric,
 $a'+b'=(a+b)'$  by   Lemma~\ref{SelfAdjoint}.
 
 Since $a+b=c$ and $E_{a+b,a+b}=E_{a.a}+E_{b,b}+E_{a,b}+E_{b,a}$, we have 
$
 E_{a,b}+  E_{b,a}=E_{a',b'}+E_{b',a'}
$
by  (\ref{C()}).
By (\ref{Im Eab}), 
$$
\langle a',b' \rangle ={\rm Im}\, (E_{a',b'}+E_{b',a'})=
{\rm Im}\, (E_{a,b}+E_{b,a})
 =\langle a,b \rangle .
$$
 Then the additive map $a\mapsto a'$ fixes each
 $2$-space of the $\FFF_2$-space $U$, and hence is 1.
\end{proof}
%%%%%%%%%%%%%%%%%%%%%%%%%%%%%%%%%%%%%%%%%%%%%%%%%%
%%%%%%%%%%%%%%%%%%%%%%%%%%%%%%%%%%%%%%%%%%%%%%%%%%%

Theorem~\ref{ShadowSemifieldKW} 
depends on relating equivalences of   spread-sets and 
  of   shadows of twists (cf. Definition~\ref{Clones and shifts}):

\begin{theorem} 
\label{IsoShadow}
Assume that 
 $\Sigma $
and $\widetilde{\Sigma} $ are symplectic spread-sets
of   $U = V(n,2),$  for odd $n > 5,$
whose respective shadows $\Delta$
 and 
$ \widetilde\Delta$ 
are equivalent.

\begin{enumerate}
	\item[\rm (a)] For some permutation $a\mapsto a'$ of $U$
	fixing $0,$ some  
	$T\in {\rm GL}(U)$ and some  $u\in U,\,$
	$T^\star B(a)T=\widetilde{B}_u(a')$ for all $a\in U,$ 
	where  $\Delta =\{ B(a) \,| \,a\in U\}$ is the shadow of $\Sigma$
	    and 
	 $\widetilde{\Delta}_u
	 	=\{ \widetilde{B}_u(a) \,| \,a\in U\}$
	is   the shadow of the twist   $\widetilde\Sigma_u$.
	
		\item[\rm (b)] If $\widetilde\Sigma$ is additively closed
		  then$,$  for  some permutation $a\mapsto \overline{a}$ of $U$
	and    $S=T^{-1} ,$
  $\widehat{C}(a): = B(\overline{a})+E_{a,a}
	=
S^\star \widetilde{C}_u(aT) S$ 
	is the canonical labeling of the additively closed symplectic spread-set  
	$S^\star \widetilde{\Sigma}_u(a) S$.
	Furthermore$,$ 
	 the shadow of $\widehat{\Sigma}=S^\star \widetilde{\Sigma}_uS$
	is $\Delta$.

	\item[\rm (c)] It $\Sigma$ and $\widetilde\Sigma$ are additively closed  then a semifield defined by $\Sigma$ is isotopic to a semifield defined by some twist of $\widetilde\Sigma$.
\end{enumerate}
\end{theorem}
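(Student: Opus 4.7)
The plan is to read off the equivalence $\Phi\colon\mathbf{D}\to\widetilde{\mathbf{D}}$ in block-matrix coordinates, convert this into an operator identity relating $\Sigma$ and a twist of~$\widetilde\Sigma$, and finally invoke the uniqueness result Proposition~\ref{Linear1} (for which $n>5$ is needed) to rigidify this identity into an isotopy of semifields in~(c).

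For part~(a), Proposition~\ref{Focus} places $\Phi$ in $\mathrm{O}(V)$ and forces $\Phi Y=Y$ for $Y=0\oplus U$. Imposing $Y$-invariance and preservation of $Q(x,y)=\textsf{b}(x,y)$ on the block form pins $\Phi$ down as $\Phi(x,y)=(xT,\,xB+yT^{-\star})$ with $T\in\mathrm{GL}(U)$ and $BT^\star$ skew-symmetric. Equating $\Phi(V(B(a)))=V(\widetilde B(a'))$ for the induced bijection $a\mapsto a'$ on~$U$ produces the operator identity $T^{-1}B+T^{-1}B(a)T^{-\star}=\widetilde B(a')$. Setting $a=0$ identifies the constant term $T^{-1}B=\widetilde B(u)$ with $u:=0'$, leaving $T^{-1}B(a)T^{-\star}=\widetilde B(a')+\widetilde B(u)$. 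Now I would invoke Corollary~\ref{Linear2} (which uses additive closure of~$\widetilde\Sigma$, the setting in which Theorem~\ref{IsoShadow} is applied) to rewrite the right-hand side as $\widetilde B_u(a'+u)$; renaming the theorem's $T$ to be $T^{-\star}$ and taking $a\mapsto a'+u$ as the claimed permutation (which fixes~$0$ since $0\mapsto u\mapsto u+u=0$) completes~(a).

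Part~(b) follows by algebra. Applying~(a) with $\overline a$ in place of~$a$ gives $B(\overline a)=S^\star\widetilde C_u(\pi(\overline a))S+S^\star E_{\pi(\overline a),\pi(\overline a)}S$, where $\pi$ denotes the permutation of~(a) and $S:=T^{-1}$. The bookkeeping rule $S^\star E_{b,b}S=E_{bS,bS}$ (immediate from~\eqref{Im Eab}) lets me choose $\overline a:=\pi^{-1}(aT)$, so that $\pi(\overline a)S=a$ and the $E$-terms collapse to $E_{a,a}$; one reads off $\widehat C(a):=B(\overline a)+E_{a,a}=S^\star\widetilde C_u(aT)S$. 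Verifying that $\widehat C$ is the canonical labeling of $\widehat\Sigma:=S^\star\widetilde\Sigma_uS$ is then routine: self-adjointness is preserved under conjugation, additive closure transfers from $\widetilde\Sigma$ through $\widetilde\Sigma_u$ (Theorem~\ref{Clone}(a)), and $\widehat C(a)+E_{a,a}=B(\overline a)\in\Delta$ is skew-symmetric as required by the canonicity condition. For part~(c), the shadow of $\widehat\Sigma$ computes to $\{\widehat C(a)+E_{a,a}:a\in U\}=\{B(\overline a):a\in U\}=\Delta$ as a set, so $\Sigma$ and $\widehat\Sigma$ are additively closed symplectic spread-sets with the same shadow. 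Proposition~\ref{Linear1} then forces $\Sigma=\widehat\Sigma$, and uniqueness of the canonical labeling gives $C=\widehat C$, i.e., $C(a)=S^\star\widetilde C_u(aT)S$ for every~$a\in U$. Passing through the presemifield laws $x*a:=xC(a)$ and $y\circ b:=y\widetilde C_u(b)$ of Remark~\ref{canonical=T condition}, this operator identity becomes $(x*a)S^{-1}=(xS^\star)\circ(aT)$, which is precisely the isotopy $(S^\star,T,S^{-1})$ from the semifield of~$\Sigma$ to the semifield of the twist~$\widetilde\Sigma_u$ of~$\widetilde\Sigma$.

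The main obstacle is the identification step in~(a): $\Phi$ need not fix the ``origin'' element $V(0)$ of $\mathbf{D}$, and the translational term $T^{-1}B=\widetilde B(u)$ it produces must be folded back into the twist formalism via Corollary~\ref{Linear2}; arranging the resulting permutation $a\mapsto a'+u$ to fix~$0$ is essential, both for the statement of~(a) and so that the subsequent choice $\overline a=\pi^{-1}(aT)$ in~(b) also fixes~$0$ (without which $\widehat C$ would fail to be a canonical labeling). Once~(a) is in hand, parts~(b) and~(c) reduce to a short combination of the $E$-conjugation rule with Proposition~\ref{Linear1}.
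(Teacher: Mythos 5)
Your proof follows the paper's own argument essentially step for step: the same block decomposition of $\Phi$ obtained from Proposition~\ref{Focus} and Corollary~\ref{Contragredient}, the same identification of the constant term at $a=0$ as $\widetilde{B}(u)$ followed by Corollary~\ref{Linear2} to fold it into the twist, the same $E_{b,b}$-conjugation bookkeeping with $S=T^{-1}$ in (b), and the same appeal to Proposition~\ref{Linear1} in (c) (which you spell out in slightly more detail than the paper's ``immediate''). The additive-closure caveat you flag for the use of Corollary~\ref{Linear2} in part (a) is equally present in the paper's proof, so you have introduced no new gap.
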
  
 
 See \cite[p.~135]{Demb} 
for the definition of  ``isotopic semifields''.
In the present context this means that 
\emph{$T_1\Sigma  T_2$  is a twist of   $\widetilde  \Sigma$ for some   $T_1,T_2\in  {\rm GL}(U)$.}

\begin{proof}  (a)
Let $\Phi  \col V \to V$ be an operator  mapping the DHO
${\bf D}$ for $ \Delta$ onto the DHO $\widetilde{\bf D}$  for  
$\widetilde{\Delta }$,
where  $V=U\oplus  U$  as usual.   By  Proposition~\ref{Basic}(b)  and 
Proposition~\ref{Focus}, $\Phi\in O(V)$ has the form
$$
(x,y)\Phi = (x\Phi_{11}, x\Phi _{12}+y\Phi_{22})
$$
where  $\Phi _{11},\Phi_{22}\in {\rm GL}(U)$, 
$\Phi _{12}\in {\rm End}(U)$,  and 
the adjoint of 
$T:= \Phi_{22}$ is  
 $T^\star=\Phi_{11}^{-1}$   by
 Corollary~\ref{Contragredient}.

 If  $C=\LLL(\Sigma)$ and $\widetilde C=\LLL({\widetilde\Sigma})$
(Definition~\ref{canonical}),
we have  $\Delta =\{ B(a):=C(a)+E_{a,a}\,| \, a\in V\}$
and $\widetilde\Delta  
=\{ \widetilde{B}(a):=\widetilde{C}(a)+E_{a,a}\,| \, a\in U\}$.
Then  ${\bf D}= \{V({B}(a))\,| \, a\in U \}$
 and $\widetilde{\bf D}= \{ V(\widetilde{B}(a))\,| \, a\in U \}$
 in the notation of (\ref{V}).
 
We apply $\Phi$ to $(x,xB(a))\in V({B}(a))\in {\bf D}$ and obtain 
$$
(x,xB(a))\Phi =(y, y\Phi _{11}^{-1}(\Phi _{12}+B(a)\Phi _{22} ) ) \in  V(\widetilde{B}(a')),
\quad y= x\Phi _{11},
$$
for some permutation $a\mapsto a'$ of $U$. 
Then 
$\widetilde{B}(a')=T^\star(\Phi _{12}+B(a)T) $. In particular,
when $a=0$ and   $u:=0'$ we have $\widetilde{B}(u)=T^\star\Phi _{12}$.
Then, in the notation of Corollary~\ref{Linear2},
  $T^\star B(a)T=\widetilde{B}(a')+\widetilde{B}(u)
 =\widetilde{B}((a'+u)+u)+\widetilde{B}(u) =\widetilde{B}_u(a'+u)$.  
Since $0'=u$, replacing  $a\mapsto a'$
 by the permutation  $a\mapsto a'+u$   produces  (a)
 (but does not change $u$). 
 
 \smallskip
(b)  If $\widetilde\Sigma$
is additively closed  then  $\widetilde{C}_u$ is additive  by
Lemma~\ref{Labeling}(b) and  the end of 
Theorem~\ref{Clone}(a).

Let $a\mapsto \overline{a}$ be the inverse
of $a\mapsto a'S$.
Then   (a) states that
$
\widehat{C}(a)=
B(\overline{a})+E_{a,a}=
S^\star \widetilde{B}_u(\overline a') S  +E_{a,a}
=
S^\star \widetilde{C}_u(aT) S +S^\star  E_{aT,aT}S+E_{a,a}
=
S^\star \widetilde{C}_u(aT) S 
$.   
The shadow of the symplectic spread-set $\widehat \Sigma$  for $\widehat{C}$ is $
\{ {B}(\overline a)\,|\, \overline a\in U\} =\Delta$, by  Definition~\ref{shadow}, while   
	$\widehat \Sigma = S^\star  \widetilde \Sigma _u S$.
Finally,  the additivity of  $a\mapsto S^\star \widetilde{C}_u(a) S$ proves (b).

 \smallskip
(c) This is immediate from
 (b) and Proposition~\ref{Linear1}.%
\end{proof}

\noindent
{\sc Proof of Theorem~\ref{ShadowSemifieldKW}}. 
By \cite[Thm. 4.15]{KW}, \cite[Thm. 1.1]{Ka2} and \cite{LMPT}, there
are at least $2^{n(\rho (n)-1)}/n^2$ symplectic semifield
spreads defining non-isomorphic semifield planes using
Example~\ref{ShadowSemifield}. 
If two equivalent  orthogonal DHOs are defined by  shadows of symplectic spread-sets $\Sigma $ and $\widetilde\Sigma $
 in Example~\ref{ShadowSemifield},  then 
 the semifields
defined by $\Sigma $ and some twist $\widetilde{\Sigma}_{ u}$ ($u\in U$)
are isotopic by
Theorem~\ref{IsoShadow}(c). 
Since there are  $|U|=2^n$ possibilities for $u$, 
we obtain at least 
$2^{n(\rho (n)-2)}/n^2$ pairwise   inequivalent DHOs.  
\qed

\remark
Note that the exact formulas for the semifield spreads
in Example~\ref{ShadowSemifield}
 were never used in the above arguments.
Therefore, if   many more inequivalent symplectic semifield spread-sets are found then there will, correspondingly, be many more inequivalent DHOs.

Also note that Proposition~\ref{Linear1} and Theorem~\ref{IsoShadow}
  deal with spread-sets and DHO-sets, and hence do not conflict with Section~\ref{The projections}, which deals with spreads and DHOs.

The preceding result and argument differ in a significant way from ones 
in 
\cite{DM,KW,KW1}  and Section~{\ref{Sec 5}}:  it did not rely on a group of automorphisms of the 
objects (DHOs) being studied, but rather on such a group for
related objects.

%%%%%%%%%%%%%%%%%%%%%%%%%%%

%%%%%%%%%%%%%%%%%%%%%%%%%%%%%%%%%%%%
%%%%%%%%%%%%%%%%%%%%%%%%%%%%%%%%%%%%

\section{Proof of Theorem~\ref{ShadowNearlyFlagKW}}
\label{Sec 5} 
We will show  that  the shadows of the symplectic spreads
of the nearly flag-transitive planes in \cite{KW1}
produce  at least as many
non-isomorphic DHOs as stated in Theorem~\ref{ShadowNearlyFlagKW}. 
We start with the corresponding spread-sets:

\begin{example}
 \cite{KW1}\,
%[Kantor-Williams \cite{KW1}]
\label{ShadowNearFlag}
Let $\underline{d} $ and $\underline{c}$ be sequences  as 
at the start of  the
preceding section, with associated fields $F_j$ and
 the additional properties that 
 $c_j\in F_j$ 
 with at least one of them nonzero,  and
$\sum _{i=1}^j c_i\neq 1$ for $1\leq j\leq m$.
For $a\in F$ define  
\begin{equation}
\label{DHO C}
C(a)= (1+\sum _{i=1}^{m} c_i)a^2{\bf 1}+ \sum _{i=1}^{m} c_iE_{a,a}^{(i)} 
\end{equation}
(the operators $E_{a,b}^{(i)}$ are  in
(\ref{Eab})).
Then $C$ is the canonical labeling  
 of a symplectic spread
set $\Sigma$. 
Indeed, $\Sigma$ is just the  
description in  \cite{DM} of the symplectic spread-sets from 
\cite{KW1}. 
For completeness we verify 
that $C$ is   the canonical labeling  $\LLL(\Sigma)$, i.\,e., in view of 
(\ref{Im Eab})  and  Definition~\ref{canonical},
that ${\rm Tr}\big(x (xC(a))\big)={\rm Tr}\big(x (xE_{a,a})\big)={\rm Tr}(ax)^2$
 (as in \cite[(2.15)]{KW}).
Since  $n$ is odd  we have ${\rm Tr}={\rm Tr}\circ {\rm Tr}_{n:d_i}$ and hence 
${\rm Tr} \big(c_iz {\rm Tr}_{n:d_i}(z) \big)=
{\rm Tr}\circ{\rm Tr}_{n:d_i} \big(c_iz {\rm Tr}_{n:d_i}(z) \big)=
{\rm Tr} \big(c_i    {\rm Tr}_{n:d_i}(z)^2 \big)=
{\rm Tr} \big(   {\rm Tr}_{n:d_i}(c_i  z^2) \big)=
{\rm Tr} (c_i  z^2)  $. 
   If $z=ax$ it follows that
$$
{\rm Tr}\big(x (xC(a)\big)={\rm Tr}(ax)^2 +\sum _{i=1}^m{\rm Tr}(c_ia^2 x^2)
+\sum _{i=1}^m{\rm Tr} \big(c_iax {\rm Tr}_{n:d_i}(ax) \big)
={\rm Tr}(ax)^2,
$$
as required.

The shadow of $\Sigma$ is 
\begin{equation}
\label{DHO B}
\mbox{
$\Delta =\{B(a) \,| \, a\in F\}$ \ with \ 
$B(a)=C(a)+ E_{a,a}^{(0)}.$}
\end{equation}
Using the quadratic form in the preceding section,
we obtain a DHO in 
$F\oplus F$:
$$\mbox{
${\bf D}={\bf D}_{\underline{d},\underline{c}}:=\{ V(a) \,| \,  a\in F\}$  \  with \ $ V(a) : =V\big(B(a)\big)$.}
$$
For $b\in F^\star$ define $M_b\in {\rm GL}_{\FFF_2}(F)$
by $(x,y)M_b=(b^{-1}x,by)$. If $y=b^{-1}x$  then
$$
\big(x,xB(a)\big)M_b=\big(y,b(byB(a))\big)=\big(y,yB(ab)\big),
$$
so that $V(a)M_b=V(ab)$ 
 in the notation of (\ref{V}),
 and ${\cal M } :=\{ M_b\,| \, b\in F^\star \}\simeq F^\star$ is a group of automorphisms of ${\bf D}$.
 Also, if $\alpha \in {\rm Aut}(F)$
 then the map 
 \begin{equation}
 \label{Phi alpha}
 \Phi _\alpha \col (x,y)\mapsto (x^\alpha , y^\alpha )
  \end{equation}
normalizes ${\cal M}$  and it is an
automorphism of 
 of ${\bf D}$ if $c_i^\alpha= c_i$  for all $i$.
Define 
 \begin{equation}
 \label{P}
  \mbox{${\cal P}=
 \{\Phi _\alpha \,| \,  c_i^\alpha= c_i$  for all $i\}$
\  and  \   $
 {\cal G}={\cal M}{\cal P}.$}
 \end{equation}
In the next proposition we will show that ${\cal G}$ is 
the full automorphism group of  ${\bf D}$.
\end{example}

\begin{remark} 
\label{ExampleLifting}
(a)
The preceding spread-set $\Sigma$ 
is obtained by successively dilating   the
desarguesian spread-set $\Sigma _0=\{ a^2{\bf 1} \,| \, a\in F\}$.
View  $\Sigma _0$ as a symplectic spread-set over $F_m$.
Let $d=d_m$ and $1\neq c=c_m\in F_m^\star$, and define $\lambda=c^{1/2}$.
By Theorem~\ref{Clone}, a typical element of the $\lambda$-dilation
has the form 
 $ ((1+\lambda)a)^2{\bf 1}+ E_{\lambda a,\lambda a}^{(m)}
 =(1+c)a^2{\bf 1}+ c E_{ a,a}^{(m)}$, where the right side
   is  $C(a)$ when $m=1$.
   Hence the spread-set $\Sigma$ is obtained as a dilation 
     in the case $m=1$.
 View $\Sigma$  as a spread-set over $F_{m-1}$
 and  iterate the dilating by choosing  $c_{m-1}\in F_{m-1}$.%
 \smallskip

 (b)
Two DHOs
${\bf D}_{\underline{d},\underline{c}}$
and ${\bf D}_{\underline{d'},\underline{c'}}$
are \emph{equal if and only if  $\underline{d}=\underline{d'}$ and
$\underline{c}=\underline{c'}$.}  This   is  proved
exactly as in \cite[Prop. 8.1]{KW1} or \cite[Proof of Thm. 5.2]{DM}. \smallskip

 (c) When $m=0$  Examples~\ref{ShadowSemifield}
and  \ref{ShadowNearFlag}  coincide with  Example~\ref{Field}.

\smallskip

(d)  
Unfortunately,   use of  Theorem~\ref{IsoShadow}(a)  
does not seem to shorten the proofs in the present section.
\end{remark}

\begin{proposition}
\label{AutIso}
Let ${\bf D}={\bf D}_{\underline{d},\underline{c}}$
and
${\bf D'}={\bf D}_{\underline{d'},\underline{c'}}$
be  {\rm DHO}s  in {\rm Example~\ref{ShadowNearFlag}. }
Then  
\begin{enumerate}
	\item[\rm (a)] ${\rm Aut}({\bf D})={\cal G},$  and
	 
	\item[\rm(b)]  ${\bf D}\simeq {\bf D}'$ if and only if 
	$\underline{d}=\underline{d'}$ and  
	 $c_i^\alpha =c_i'$ for some  $\alpha \in {\rm Aut}(F)$ and
	 $1\leq i \leq |\underline{d}|$.

\end{enumerate}

\end{proposition}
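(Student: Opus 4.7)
For both parts, let $\Phi : V\to V$ be an equivalence from $\mathbf{D}$ to $\mathbf{D}'$ (taking $\mathbf{D}'=\mathbf{D}$ in part (a)). By Proposition~\ref{Focus} and Corollary~\ref{Contragredient}, $\Phi$ lies in the stabilizer of $Y = 0\oplus F$ in the orthogonal group and has the block form
$$(x,y)\Phi = (xT^{-\star},\, x\Phi_{12} + yT)$$
for some $T\in{\rm GL}(F)$ (viewed as $\FFF_2$-space) and $\Phi_{12}\in{\rm End}(F)$. Imposing $V(a)\Phi\in\mathbf{D}'$ for every $a\in F$ and using $B(0)=0$, one obtains a permutation $a\mapsto a'$ of $F$ together with the functional equation
$$T^\star B(a)T = B'(a') + B'(u)\qquad\text{for all }a\in F,$$
where $u:=0'$ satisfies $B'(u)=T^\star\Phi_{12}$. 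The ``if'' direction of (b) is immediate: $\Phi_\alpha$ sends $\mathbf{D}_{\underline d,\underline c}$ onto $\mathbf{D}_{\underline d,\underline c^\alpha}$, which equals $\mathbf{D}_{\underline{d'},\underline{c'}}$ by Remark~\ref{ExampleLifting}(b); and the inclusion $\mathcal{G}\subseteq{\rm Aut}(\mathbf{D})$ in (a) was already verified in Example~\ref{ShadowNearFlag}. So everything reduces to analyzing the above functional equation.

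The crux is to show this equation forces $T=M_b\Phi_\alpha$ for some $b\in F^\star$ and $\alpha\in{\rm Aut}(F)$, and also $u=0$ (hence $\Phi_{12}=0$). The structural input is the specific shape of $B(a)$: a scalar piece $(1+\sum_i c_i)a^2\mathbf{1}$ plus the rank-$1$ operators $E_{a,a}^{(i)}$, each of whose image is the $F_i$-line $\langle a\rangle_{F_i}$ (Lemma~\ref{RankOne}). Since $T^\star B(a)T$ must decompose in the same way on the $\mathbf{D}'$ side (with $a$ replaced by $a'$), comparing ranks and images using the chain $\FFF_2=F_0\subset F_1\subset\cdots\subset F_m\subset F$ should force $T$ to permute $F_i$-lines for each $i$. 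Inductively peeling off the outermost rank-$1$ contributions (largest $F_m$ first) forces $T$ to commute with multiplication by $F_m$ up to a Frobenius twist, which, combined with the action on the scalar part $(1+\sum c_i)a^2\mathbf{1}$, pins down $T=M_b\Phi_\alpha$. This is exactly the style of analysis carried out for the corresponding symplectic spreads in \cite[Prop.~8.1]{KW1} and \cite[\S 5]{DM}.

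Once $T=M_b\Phi_\alpha$ is identified, substituting back into the functional equation and matching the scalar parts yields $a'=b^{-1}a^\alpha$ and in particular $u=0$, while matching the subfield rank-$1$ parts forces $\underline{d}=\underline{d'}$ and $c_i^\alpha=c_i'$ for all $i$. This completes the ``only if'' direction of (b). For (a), specializing to $\mathbf{D}=\mathbf{D}'$ gives $c_i^\alpha=c_i$ for all $i$, so $\Phi_\alpha\in\mathcal{P}$ and hence $\Phi=M_b\Phi_\alpha\in\mathcal{M}\mathcal{P}=\mathcal{G}$.

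The main obstacle is the rank-and-subfield analysis of the middle paragraph: passing from the functional equation $T^\star B(a)T=B'(a')+B'(u)$ to the claim that $T$ must be semilinear of the prescribed form. This parallels the spread argument of \cite{KW1,DM}, but is complicated by the extra rank-$1$ term $E_{a,a}^{(0)}$ distinguishing $B(a)=C(a)+E_{a,a}^{(0)}$ from $C(a)$, so one must track how this term interacts with the $F_i$-structure during the inductive peeling step.
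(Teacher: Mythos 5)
Your setup (the block form of $\Phi$ forced by Proposition~\ref{Focus} and Corollary~\ref{Contragredient}, and the resulting functional equation $T^\star B(a)T=B'(a')+B'(u)$) is correct and agrees with what the paper derives in the proof of Theorem~\ref{IsoShadow}(a). The ``if'' direction of (b) and the inclusion ${\cal G}\subseteq{\rm Aut}({\bf D})$ are also fine. But the entire content of the proposition lives in your middle paragraph, and there you do not give a proof: you assert that comparing ranks and images of the operators $E_{a,a}^{(i)}$ ``should force'' $T$ to respect the subfield structure and hence be of the form $M_b\Phi_\alpha$ with $u=0$. No such comparison is carried out, and it is far from routine: $T$ is an arbitrary element of ${\rm GL}_{\FFF_2}(F)$, the permutation $a\mapsto a'$ is unknown, and even the ``scalar'' part does not transform into a scalar under $T^\star(\cdot)T$ unless $T$ is already known to be orthogonal/semilinear. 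Your appeal to \cite[Prop.~8.1]{KW1} is also misplaced: Remark~\ref{ExampleLifting}(b) uses that argument only to decide when two DHOs are \emph{equal} (no unknown $T$), which is a much weaker statement than equivalence under an arbitrary $\Phi$. The authors explicitly remark after Lemma~\ref{AutNearFlag} that they do not know an ``elementary polynomial argument'' for this step --- which is essentially what you are promising to supply without details.

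The paper's actual route is group-theoretic and quite different. First, Lemma~\ref{NoTranslation} shows ${\rm Aut}({\bf D})$ acts faithfully on $Y$, via a concrete polynomial identity ($a^{2^k+1}+b^{2^k+1}=a'^{2^k+1}$ for $k=1,2$ forces $b=0$). Then Lemma~\ref{Singer} exploits the known cyclic subgroup ${\cal M}\simeq F^\star$ (a Singer cycle on $Y$): by Kantor's theorem \cite{Ka0} the automorphism group contains a normal ${\rm GL}(k,2^\ell)$ with $n=k\ell$, the case $k>1$ is excluded using the classification of $2$-transitive representations \cite{CKS} together with a rank estimate ${\rm rk}(B(a')+B(a))\le 2\ell<n-1$ coming from a transvection, and one concludes ${\rm Aut}({\bf D})\le\Gamma{\rm L}(1,2^n)$ with ${\cal M}$ normal. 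Only after that does the elementary comparison of parameters (your ``matching'' step) become legitimate, because $\Phi$ is then known to lie in ${\cal M}\{\Phi_\alpha\}$. Without either this group-theoretic reduction or a genuinely worked-out direct analysis of the functional equation, your argument has a gap precisely at its crux.
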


 We will prove this  using several  lemmas.
Recall
 that ${\bf D}$ and ${\bf D}'$ split  over $Y=0\oplus F\subseteq V$.
 \begin{lemma}
\label{NoTranslation}
If ${\Phi\in {\rm Aut}({\bf D})}$ satisfies   ${\Phi _Y={\bf 1}_Y}$
and ${\Phi _{V/Y}={\bf 1}_{V/Y}}$ then ${\Phi ={\bf 1}}$. 

\end{lemma}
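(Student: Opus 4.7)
The plan is to show that such a $\Phi$ must be the identity by extracting enough structure from the DHO to force the associated skew-symmetric operator to vanish. By Proposition~\ref{Focus}, $\Phi$ preserves $Y$, and combined with $\Phi_Y=\mathbf{1}_Y$ and $\Phi_{V/Y}=\mathbf{1}_{V/Y}$ this forces the block form $(x,y)\Phi=(x,xB+y)$ for some $B\in{\rm End}(F)$; orthogonality with respect to $Q(x,y)={\rm Tr}(xy)$ then makes $B$ skew-symmetric. Since $\Phi$ permutes $\mathbf{D}$, comparing second coordinates of $V(a)\Phi=V(\pi(a))$ yields $B(\pi(a))=B(a)+B$, and evaluating at $a=0$ (using $B(0)=0$) identifies $B=B(u)$ with $u:=\pi(0)$. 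The task therefore reduces to proving $u=0$.

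Next I would apply a rank comparison to the identity $B(\pi(a))+B(a)+B(u)=0$. Using the explicit form $xB(c)=\mu c^{2}x+\sum_{i=0}^{m}c_i{\rm Tr}_{n:d_i}(cx)\,c$ (with $c_0:=1$ and $\mu=1+\sum_{i=1}^{m}c_i\ne 0$, the latter guaranteed by the hypotheses of Example~\ref{ShadowNearFlag}), the identity splits as a scalar operator $\mu(a+u+\pi(a))^{2}\mathbf{1}$, of rank $0$ or $n$, plus a remainder that is a sum of $3(m+1)$ rank--$1$ operators of type $E^{(i)}_{\cdot,\cdot}$. Since $3(m+1)<n$ in the regime $n>27$ of Theorem~\ref{ShadowNearlyFlagKW}, the scalar term must vanish, forcing $\pi(a)=a+u$ and hence $B(a+u)=B(a)+B(u)$ for every $a\in F$.

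Expanding this additivity via $E^{(i)}_{a+u,a+u}=E^{(i)}_{a,a}+E^{(i)}_{u,u}+E^{(i)}_{a,u}+E^{(i)}_{u,a}$ reduces it to $\sum_{i=0}^{m}c_i\bigl(E^{(i)}_{a,u}+E^{(i)}_{u,a}\bigr)=0$ for every $a\in F$. Evaluating at $x\in F$ gives $u\,\tau(ax)+a\,\tau(ux)=0$ for the additive polynomial $\tau(y):=\sum_{i=0}^{m}c_i{\rm Tr}_{n:d_i}(y)$, and setting $a=1$ produces the functional equation $\tau(uy)=u\,\tau(y)$ on $F$. Writing $\tau(y)=\sum_{k=0}^{n-1}\phi_k y^{2^k}$ with $\phi_k=\sum_{i:\,d_i\mid k}c_i$, one has $\phi_1=c_0=1$ because $d_0=1$ is the only $d_i$ dividing $1$. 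Matching additive-polynomial coefficients in $\tau(uy)=u\tau(y)$ yields $\phi_k(u^{2^k}-u)=0$ for each $k$; taking $k=1$ forces $u\in\FFF_2$.

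To finish, I would rule out $u=1$: substituting $u=1$ back into $u\tau(ax)+a\tau(ux)=0$ strengthens the relation to $\tau(ax)=a\tau(x)$ for all $a,x\in F$, which makes $\tau$ coincide with multiplication by the scalar $\tau(1)=\mu$ and hence forces $\phi_1=0$, contradicting $\phi_1=1$. Therefore $u=0$, so $B=B(0)=0$ and $\Phi=\mathbf{1}$. The main obstacle is the rank comparison in the second paragraph, which is the step that converts the abstract permutation $\pi$ of the index set of $\mathbf{D}$ into the concrete translation $a\mapsto a+u$; everything afterward is additive-polynomial bookkeeping pinned down by the canonical value $\phi_1=1$ inherent in the construction of Example~\ref{ShadowNearFlag}.
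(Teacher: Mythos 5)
Your first paragraph reproduces the paper's setup correctly, and the computations in your third and fourth paragraphs are sound. The fatal problem is the rank comparison in your second paragraph, on which everything afterwards depends. You treat each $E^{(i)}_{\cdot,\cdot}$ as a rank-one operator, but for $i\geq 1$ the operator $E^{(i)}_{a,b}\colon x\mapsto {\rm Tr}_{n:d_i}(ax)b$ is only $F_i$-linear of $F_i$-rank one; as an $\FFF_2$-linear operator on $F=V(n,2)$ its image is $F_ib$, so its $\FFF_2$-rank is $d_i$. (The ranks must be taken over $\FFF_2$, since the identity $B(\pi(a))+B(a)+B(u)=0$ and the scalar term $\mu(\cdot)^2{\bf 1}$ live in ${\rm End}_{\FFF_2}(F)$.) The correct bound on the rank of the remainder is therefore $3\sum_{i=0}^m d_i$, not $3(m+1)$, and this can reach or exceed $n$: already for $n=9$, $\underline d=(1,3)$ one gets $3(1+3)=12>9$, and for $n=81$, $\underline d=(1,3,9,27)$ one gets $120>81$, so restricting to $n>27$ does not help. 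More precisely, the image of the remainder lies in $F_m\pi(a)+F_ma+F_mu$, whose dimension can be $3d_m=n$ whenever $d_m=n/3$, so nothing prevents the remainder from being an invertible scalar. Hence the reduction to $\pi(a)=a+u$ is not established, and the proof has a genuine gap.

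The paper avoids rank counting entirely: it reads off the coefficients of $x^{2^1}$ and $x^{2^2}$ in the polynomial identity $xB(a)+xB(u)=xB(\pi(a))$ modulo $x^{2^n}-x$. Since every $d_i$ with $i\geq1$ is an odd divisor of $n$ exceeding $1$, hence at least $3$, those two exponents receive contributions only from the $E^{(0)}$ part, giving $a^3+u^3=\pi(a)^3$ and $a^5+u^5=\pi(a)^5$; the compatibility $\bigl(\pi(a)^3\bigr)^5=\bigl(\pi(a)^5\bigr)^3$ then yields $(a^3+u^3)^5=(a^5+u^5)^3$ for all $a$, a nontrivial polynomial identity of degree $12<2^n$ unless $u=0$. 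Note that these same coefficient relations are incompatible with your intermediate claim $\pi(a)=a+u$ for $u\neq0$ (they would force $a^2u+au^2=0$ for all $a$), which is a further sign that the rank step cannot be repaired so as to deliver that claim with $u\neq 0$. If you wish to keep your overall structure, replace the rank comparison by this coefficient extraction — at which point the additive-polynomial bookkeeping of your last two paragraphs becomes unnecessary.
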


\begin{proof}
By assumption, $(x,y)\Phi=(x,xR+y)$ for some $R\in {\rm End}(F)$.
There is a permutation $a\mapsto a'$ of $F$ such that 
$V(a)\Phi=V(a')$ for all $a$.  Then $(x,xB(a))\Phi=(x,xB(a'))$ states that
$R+B(a)=B(a')$ for all $a$.
Let $b:=0'$, so that   $R=B(b)$.

If $b=0$ then $\Phi ={\bf 1}$, as required.

Suppose that  $b\neq 0$.  We have $B(a)+B(b)=B(a')$.
Consider the  equation $x  B(a)+xB(b)=xB(a')$
as a polynomial equation modulo $x^{2^n}-x$.
By (\ref{DHO C}) and (\ref{DHO B}), $xB(a) $ is the sum of a term 
linear in $x$, terms of the form $cx^{2^{d_ik}}$ with 
$d_i > 2$ and $0< d_ik<n$
 and $c\in F$, and 
terms such as  $a^{1+2^{k}}x^{2^{k}}$ arising from ${\rm Tr}(ax)a$.
If   $0<k<n$, $(k,n)=1$, then  
$$
a{}^{2^k+1}x^{2^k}+b^{2^k+1}x^{2^k}=a'{}^{2^k+1}x^{2^k}, \quad x\in F, \quad {\rm i.\,e.,} \quad a^{2^k+1}+b^{2^k+1}=a'{}^{2^k+1}.
$$
Choosing  $k=1$ and $k=2$, since  $(a'{}^3)^5=(a'{}^5)^3$  we see that
 every $x\in F$
satisfies
$(x^3+b^3)^5=(x^5+b^5)^3,$
 which is absurd since $b\ne0$.
\end{proof}

\begin{lemma}
\label{Singer}
${\rm Aut}({\bf D})$ is isomorphic to
a subgroup of $\Gamma {\rm L}(1,2^n),$ and ${\cal M}$ is normal in
${\rm Aut}({\bf D})$.
\end{lemma}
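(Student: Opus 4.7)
My plan is to show that $G:={\rm Aut}({\bf D})$ acts faithfully on $Y$ and that the image of this action lies in $\Gamma {\rm L}(1,F) \le {\rm GL}(Y)$; both conclusions of the lemma then follow at once, since $F^\star = {\cal M}|_Y$ is normal in $\Gamma {\rm L}(1,F)$.

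Faithfulness is the shortest part. I would let $K$ be the kernel of the restriction $G\to {\rm GL}(Y)$. Corollary~\ref{Contragredient} says that the representations of $G$ on $Y$ and on $V/Y$ are contragredient, so every $\Phi\in K$ also acts trivially on $V/Y$; Lemma~\ref{NoTranslation} then gives $\Phi={\bf 1}$, so $K=1$ and $G$ embeds into ${\rm GL}(Y)={\rm GL}_{\FFF_2}(F)$.

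Next I would extract the defining identity. By Proposition~\ref{Focus} and Corollary~\ref{Contragredient} (as in the proof of Theorem~\ref{IsoShadow}(a)), any $\Phi\in G$ has the form $(x,y)\Phi=(x(T^\star)^{-1},\,xR+yT)$ with $T\in {\rm GL}(Y)$ and $R\in{\rm End}(F)$, and the requirement that $\Phi$ send ${\bf D}$ to itself via a permutation $\sigma\colon a\mapsto a'$ of $F$ is equivalent to $B(\sigma(a))=T^\star R+T^\star B(a)T$. Setting $a=0$ gives $R=(T^\star)^{-1}B(u)$ with $u:=\sigma(0)$, so the identity simplifies to
$$
B(\sigma(a))+B(u)=T^\star B(a)\,T \qquad \text{for all } a\in F. \qquad (\ast)
$$
The heart of the argument is to read off from $(\ast)$ that $T\in\Gamma {\rm L}(1,F)$. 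The crucial feature of the formula $B(a)=\gamma a^2{\bf 1}+\sum_{i=0}^m c_i'\,E^{(i)}_{a,a}$ (with $\gamma=1+\sum_{i\ge 1}c_i\in F^\star$ and $c_0'=1$) is that the ``field multiplication'' piece $\gamma a^2{\bf 1}$ is $F$-linear of full $\FFF_2$-rank~$n$, whereas each correction $E^{(i)}_{a,a}$ has $\FFF_2$-rank $d_i$ and carries only $F_i$-linear data (its image is the $F_i$-line through $a$). Using $d_i\mid n$ and $d_i<n$, the hypothesis $n>27$ (inherited from Theorem~\ref{ShadowNearlyFlagKW}) forces the total rank $\sum_i d_i$ of the perturbation to be well below $n$, so the scalar contribution $\gamma a^2\,T^\star T$ of $T^\star B(a)T$ is forced, up to a bounded-rank error, to match the scalar component of $B(\sigma(a))+B(u)$ for every $a$. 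Letting $a$ vary, this simultaneously pins down $T^\star T$ as scalar multiplication and shows that $\sigma$ has the form $a\mapsto \lambda a^\alpha+u$ for some $\lambda\in F^\star$ and $\alpha\in {\rm Aut}(F)$; equivalently, $T$ is $F$-semilinear, i.e. $T\in N_{{\rm GL}(Y)}(F^\star)=\Gamma {\rm L}(1,F)$.

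The main obstacle is precisely this algebraic disentangling, because the corrections $E^{(i)}_{a,a}$ couple the $F$-structure with every intermediate subfield $F_1\subset\cdots\subset F_m$, so $T$ and $T^\star$ must be tracked across all of them simultaneously; separating the scalar part cleanly from the nested trace-rank terms is where the real work lies. Once $T\in \Gamma {\rm L}(1,F)$ is established for every $\Phi\in G$, the first assertion of the lemma is the embedding from the faithfulness step, and ${\cal M}$ is normal in $G$ because $F^\star$ is normal in $\Gamma {\rm L}(1,F)$.
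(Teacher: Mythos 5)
Your faithfulness step is fine and coincides with the paper's: Lemma~\ref{NoTranslation} together with Corollary~\ref{Contragredient} gives a faithful action on $Y$, and your reduction of the automorphism condition to the identity $B(\sigma(a))+B(u)=T^\star B(a)T$ is also correct. The gap is in what you call the heart of the argument. You assert that rank considerations ``pin down $T^\star T$ as scalar multiplication'' and force $\sigma(a)=\lambda a^\alpha+u$, but no proof is given, and the sketch does not work as stated. The perturbation $\sum_{i=0}^m c_i'E^{(i)}_{a,a}$ can have $\FFF_2$-rank close to $n/2$ (e.g.\ $\underline d=(1,n/3,\dots)$), so differencing two instances of $(\ast)$ produces an error term of rank up to $4\sum_i d_i$, which can exceed $n$; the scalar part is therefore not isolated. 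Even if one could show $T^\star T=c\,{\bf 1}+(\text{low rank})$, that is far from showing $T$ is $F$-semilinear. The authors themselves remark after Lemma~\ref{AutNearFlag} that they do not know an ``elementary polynomial argument'' of this kind and would find one interesting --- so the step you leave to rank heuristics is precisely the open difficulty. A secondary problem: you import the hypothesis $n>27$, but the lemma carries no such restriction and is needed (via Lemma~\ref{AutNearFlag}) for $n=5$ in Examples~\ref{Field} and~\ref{Connection}.

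The paper's actual route is group-theoretic and sidesteps the disentangling entirely: since the faithful image of ${\rm Aut}({\bf D})$ in ${\rm GL}(Y)\cong{\rm GL}(n,2)$ contains the Singer cycle ${\cal M}$, Kantor's theorem \cite{Ka0} gives a normal subgroup ${\cal H}\cong{\rm GL}(k,2^\ell)$ with $n=k\ell$; the case $k>1$ is excluded by noting that ${\cal H}$ fixes $V(0)$ (else it would be $2$-transitive on ${\bf D}$, against \cite{CKS}) and that a transvection $A\in{\cal H}$ would force ${\rm rk}(B(a')+B(a))\le 2\ell<n-1$, contradicting Lemma~\ref{Coord}(c). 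If you want to salvage your approach you would need to actually carry out the semilinearity argument, which is substantially harder than the rank count you describe; otherwise you should use the Singer-cycle classification as the paper does.
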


\begin{proof} Set ${\cal A}  :={\rm Aut}({\bf D})$.
By Lemma~\ref{NoTranslation} and Corollary~\ref{Contragredient},
${\cal A}$ acts faithfully  on $Y$, and  ${\cal M}$ induces
a Singer group of ${\rm GL}(Y)$.
By  \cite{Ka0},   ${\cal A}$ has a normal
subgroup  ${\cal H}\simeq {\rm GL}(k ,2^\ell )$,  where  $n=k\ell$
and  $ {\cal Z}:={\cal M}\cap Z({\cal H})$ is
a cyclic group of order $2^\ell -1$. 
If $k=1$, then ${\cal H}={\cal M}$, as required.

Assume   that $k>1$. The   ${\cal M}$-orbits  on ${\bf D}$
are $\{V(0)\}$ and ${\bf D} - \{V(0)\}$.
Then $V(0)$ is ${\cal H}$-invariant,
as otherwise  ${\cal H}$  would be 2-transitive on ${\bf D}$, contradicting \cite{CKS}. 
The action of ${\cal M}$ on $V(0)$ is the same as its action on the field $F$, hence   $V(0)$
can be viewed as an $\FFF_{2^\ell}$-space on which ${\cal Z}$ acts  as 
 $\FFF_{2^\ell}^\star$  and  $ {\cal H}  $   acts as  ${\rm GL}(k ,2^\ell )$.

 In order to obtain a contradiction 
 we will use a transvection  $A$ in ${\rm GL}(k ,2^\ell )$  (so that  the $\FFF_2$-space $W:=C_{V(0)}(A)$ has dimension  $n-l$  and $A^2=1$;
 from now on 
dimensions will   be over $\FFF_2$).  By Corollary~\ref{Contragredient},  $A$ arises from  an operator $\Phi\in {\cal H}$
such that
  $(x,y)\Phi=(xA, y(A^\star)^{-1} ) = (xA, yA^\star)$.

There is a permutation $a\mapsto a'$ of $F^\star$ such that 
$V(a)\Phi =V(a')$. 
Then  $AB(a)A ^\star = B(a')$ since 
$V(a)\Phi =\{ (x, xAB(a)A ^\star ) \,| \,
x\in F\}$.

Note that $W(AB(a)A^\star+B(a))\subseteq
WB(a)(A^\star+ {\bf 1})$ has dimension 
$\le {\rm rk}(A^\star+ {\bf 1})=l$.
Since $\dim V - \dim W =n-(n-l)$,  it follows that
  ${\rm rk}(B(a')+B(a))=\dim V(0)(AB(a)A^\star+B(a))\le l+l<n-1$.
By Lemma~\ref{Coord}(c),
  $a'=a$ and hence  $\Phi ={\bf 1}$,
a contradiction.%
\end{proof}

\begin{lemma}
\label{AutNearFlag}
 ${\rm Aut}({\bf D})={\cal   G}$. 

\end{lemma}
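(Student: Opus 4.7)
The plan is to show every $\Phi\in\mathrm{Aut}(\mathbf{D})$ lies in $\mathcal{M}\mathcal{P}=\mathcal{G}$. By Lemma~\ref{Singer}, $\mathrm{Aut}(\mathbf{D})$ acts on $Y$ as a subgroup of $\Gamma\mathrm{L}(1,F)$ in which $\mathcal{M}|_Y$ realizes $F^\star$, so after multiplying $\Phi$ by a suitable $M_b\in\mathcal{M}$ I may assume that $\Phi|_Y$ is a Frobenius map $y\mapsto y^\alpha$ for some $\alpha\in\mathrm{Aut}(F)$. Because the trace form satisfies $\alpha^\star=\alpha^{-1}$, Corollary~\ref{Contragredient} forces $\Phi|_{V/Y}$ also to act as $\alpha$. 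Proposition~\ref{Focus} places $\Phi$ in the orthogonal stabilizer of $Y$, and a routine orthogonal-group calculation then shows
\[
\Phi(x,y)\;=\;(x^\alpha,\; xC+y^\alpha)
\]
for some $\mathbb{F}_2$-linear endomorphism $C$ of $F$.

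Next, the condition $\Phi(V(a))=V(a')$ becomes the polynomial identity $xC+x^\alpha B(a)^{[\alpha]}=x^\alpha B(a')$ for all $x\in F$, where $B(a)^{[\alpha]}$ denotes the linearized polynomial obtained from $xB(a)$ by applying $\alpha$ to each coefficient. Taking $a=0$ with $b:=0'$ yields $xC=x^\alpha B(b)$, and the general equation then simplifies to
\[
B(a')\;=\;B(a)^{[\alpha]}+B(b)\qquad\text{for every } a\in F.
\]

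The main task, and the principal obstacle, is to deduce $b=0$ from this relation. Expanding $xB(a)$ via~\eqref{DHO B} and~\eqref{DHO C}, its coefficient of $x^{2^k}$ (for $1\le k\le n-1$) equals $\sigma_k\,a^{1+2^k}$, where $\sigma_k:=1+\sum_{i\ge 1,\,d_i\mid k}c_i$. Since the $d_i$ form a divisor chain, $\sigma_k=1$ precisely when $d_1\nmid k$ (or, vacuously, when $m=0$), and for such $k$ the coefficient identity reduces to $(a')^{1+2^k}=(a^\alpha)^{1+2^k}+b^{1+2^k}$. Selecting $k_1=1$ together with $k_2=2$ if $d_1\ge 3$ and $k_2=3$ if $d_1=2$ (both satisfying $d_1\nmid k_i$), and imitating the proof of Lemma~\ref{NoTranslation}, one eliminates $a'$ via $((a')^{1+2^{k_1}})^{1+2^{k_2}}=((a')^{1+2^{k_2}})^{1+2^{k_1}}$ to obtain a polynomial identity in $x:=a^\alpha$ of degree at most $24$. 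Since $24<2^n$ in our range and this identity must hold for every $x\in F$, the polynomial is identically zero; its monomial coefficients in $b$ then force $b=0$.

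With $b=0$ we obtain $C=0$ and $\Phi=\Phi_\alpha$. The condition $\Phi_\alpha(V(a))\in\mathbf{D}$ now becomes $B(a')=B(a)^{[\alpha]}$, so the DHO-sets of $\mathbf{D}$ and of $\mathbf{D}_{\underline d,\underline c^\alpha}$ coincide. By Remark~\ref{ExampleLifting}(b) this forces $c_i^\alpha=c_i$ for all $i$, so $\Phi_\alpha\in\mathcal{P}$ and the original $\Phi$ lies in $\mathcal{M}\mathcal{P}=\mathcal{G}$, as claimed.
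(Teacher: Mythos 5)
Your proof is correct and follows essentially the paper's own route: Lemma~\ref{Singer} reduces $\Phi$ modulo ${\cal M}$ to a map inducing a field automorphism on $Y$ (hence, by contragredience, on $V/Y$), the $(x^3+b^3)^5=(x^5+b^5)^3$ elimination of Lemma~\ref{NoTranslation} kills the translation part $b=0'$, and Remark~\ref{ExampleLifting}(b) forces $c_i^\alpha=c_i$. The only real difference is that you rerun the Lemma~\ref{NoTranslation} computation in the twisted form $B(a')=B(a)^{[\alpha]}+B(b)$ instead of quoting it verbatim -- which is precisely the step the paper leaves implicit -- and your case $d_1=2$ is vacuous since $d_1\mid n$ with $n$ odd.
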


\begin{proof}
By the preceding lemma,  
we need to determine which  
  $\Phi _\alpha $ lie in ${\cal G}$. 
Since
$V(a)\Phi _\alpha =\{ (x^\alpha , (xB(a) )^\alpha ) \,| \,x\in F\}$,
 (\ref{DHO C}) and (\ref{DHO B}) show that $V(a)\Phi _\alpha =V(a^\alpha) $, so that 
  ${\bf D}_{\underline{d},\underline{c}}=
{\bf D}_{\underline{d},\underline{c}^\alpha}$.
By Remark~\ref{ExampleLifting}(b), $c_i=c_i^\alpha$ for all $i$,   so that
$\Phi _\alpha \in {\cal P}$.%
\end{proof}

\begin{remark}
 It might be interesting to have a proof of 
 Lemma~\ref{AutNearFlag}
using an elementary polynomial argument rather than the somewhat less elementary group
theory we employed.
\end{remark}

\noindent
\emph{Proof of} Proposition~\ref{AutIso}.
We   just proved    (a). 
Consider   (b).
Clearly,  $\Phi _\alpha$ 
maps ${\bf D}_{\underline{d},\underline{c}}$ onto
${\bf D}_{\underline{d},\underline{c}^\alpha }$ (cf. (\ref{Phi alpha})).

Conversely, assume that $\Phi$ maps ${\bf D}$ onto
${\bf D}'$. By Proposition~\ref{Focus}, $\Phi$ lies in ${\rm O}(V)$,
and by Lemma~\ref{Singer} it  even
 lies in the normalizer 
 ${\cal M}\{ \Phi _\alpha \,| \, \alpha \in {\rm Aut}(F)\}$
 of
${\cal M}$ in ${\rm O}(V)$.
(Compare the proofs of \cite[Prop. 5.1]{KW1} or
\cite[Prop. 4.6]{DM}; the former does not even need the 
precise group   ${\rm Aut}({\bf D}) $.) So we may assume
that  $\Phi =\Phi _\alpha$
for some  $\alpha$. Arguing
as in the proof of the preceding lemma we
obtain $\underline{d}=\underline{d}'$ and
$\underline{c'}=\underline{c}^\alpha$.
\qed
\medskip

We leave the following calculation to the reader:
\begin{lemma}
\label{Estimate} 
If $p_1 \leq \cdots \leq p_\ell$ are odd primes$,$   then
$$
\frac{(2^{p_1}-1)(2^{p_1p_2}-1)\cdots (2^{p_1 \cdots p_\ell}-1)}{p_1 \cdots p_\ell} \geq 2^{3 ^\ell}
$$
unless $(\ell ; p_1, \ldots, p_\ell )= (1;3), (1;5)$ or $(2;3,3)$.

\end{lemma}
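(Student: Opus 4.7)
The plan is a three-step reduction: first, show by a monotonicity argument that the left-hand side is minimized at $p_1=\cdots=p_\ell=3$; second, prove the inequality in that minimum configuration by induction on $\ell$ starting from $\ell=3$; third, verify the small cases $\ell=1,2$ by direct computation, identifying the three exceptions.

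Write $L(p_1,\ldots,p_\ell):=\prod_{k=1}^{\ell}(2^{n_k}-1)/p_k$ with $n_k=p_1\cdots p_k$. For the monotonicity step, I would show that replacing any $p_j$ by a smaller odd prime $p_j'$ (subject to preserving $p_1\le\cdots\le p_\ell$) strictly decreases $L$. The rescaling $n_k\mapsto n_kp_j'/p_j$ for each $k\ge j$ shrinks $(2^{n_k}-1)$ by essentially $2^{n_k(1-p_j'/p_j)}$, and since $n_k\ge 3^k\ge 3$ this exponential loss easily dominates the compensating factor $p_j/p_j'\le p_j$ appearing in the single denominator at $k=j$. Iterating by setting the leftmost $p_j>3$ equal to $3$ (which always respects the ordering, since all earlier $p_i$ already equal $3$) brings any tuple down to the all-$3$ tuple.

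Set $P_\ell:=\prod_{k=1}^{\ell}(2^{3^k}-1)/3^\ell$. The base case $\ell=3$ is direct:
\[
P_3 \;=\; \frac{7\cdot 511\cdot(2^{27}-1)}{27} \;>\; 132\cdot(2^{27}-1) \;>\; 2^{27}.
\]
For the induction step, $P_{\ell+1}/P_\ell=(2^{3^{\ell+1}}-1)/3$, so $P_\ell\ge 2^{3^\ell}$ gives $P_{\ell+1}\ge 2^{3^\ell}(2^{3^{\ell+1}}-1)/3\ge 2^{3^{\ell+1}}$ as soon as $2^{3^\ell}(1-2^{-3^{\ell+1}})\ge 3$, which is trivial for every $\ell\ge 1$ since the left side exceeds $8\cdot(511/512)$.

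For the small cases: at $\ell=1$, $(2^p-1)/p$ equals $7/3,\,31/5,\,127/7,\,2047/11,\ldots$ for $p=3,5,7,11,\ldots$, so falls below $2^3=8$ exactly when $p\in\{3,5\}$. At $\ell=2$, only $(p_1,p_2)=(3,3)$ fails, since $7\cdot 511/9=397<512=2^9$; the next smallest tuple $(3,5)$ already gives $7\cdot 32767/15>15000\gg 2^9$, and larger tuples yield still more. The main obstacle lies in the monotonicity step, whose careful swap-by-swap verification (respecting the ordering constraint) absorbs essentially all of the content; the induction and small-case checks are short calculations.
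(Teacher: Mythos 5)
The paper gives no proof of this lemma --- it is explicitly ``left to the reader'' --- so there is nothing to compare your argument against; judged on its own, your proof is correct. The structure (reduce to the all-$3$ tuple by monotonicity, induct on $\ell$ from the base case $\ell=3$, and check $\ell=1,2$ directly against the listed exceptions) works, and your numerical checks ($3577/27>132$, $3577/9<512$, $7\cdot 32767/15>512$, and the values $7/3,\,31/5,\,127/7$ for $\ell=1$) are all right. The one step you flag as the main obstacle, the monotonicity swap, does go through and can be made precise in one line: if $p_1=\dots=p_{j-1}=3$ and you replace $p_j\ge 5$ by $3$, then with $n_k=p_1\cdots p_k$ and $m_k=3n_k/p_j$ one has
$$
\frac{L(\text{new})}{L(\text{old})}=\frac{p_j}{3}\prod_{k\ge j}\frac{2^{m_k}-1}{2^{n_k}-1}<\frac{p_j}{3}\,2^{-(n_j-m_j)}=\frac{p_j}{3}\,2^{-3^{j-1}(p_j-3)}\le\frac{p_j}{3}\,2^{-(p_j-3)}\le\frac{5}{12}<1 ,
$$
so each swap strictly decreases $L$ while preserving the ordering; the same inequality read in reverse justifies the claim that for $\ell=2$ every tuple other than $(3,3)$ dominates $(3,5)$ and hence exceeds $2^9$. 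With that bound written out, the proof is complete.
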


\noindent
{\sc   Proof of Theorem~\ref{ShadowNearlyFlagKW}.}
Let $n=p_1p_2\cdots  p_{m+1}$ for odd primes $p_i$
such that 
$p_1 \leq \cdots \leq p_{m+1}$,
i.e. $\rho (n)=m+1$. 
Consider the chain $\FFF _2 =F_0 \subset F_1 \subset F_2 \subset \cdots \subset F_{m+1} =F=\FFF _{2^n}$
where $|F_i|= {2^{d_i}}$ 
for $d_i=p_1 \cdots p_i$.
Every sequence 
$(c_1, \ldots ,c_m)$  with  $c_i\in F_i$ 
and
$\sum _{i=1}^j c_i\neq 1$ for $1\leq j\leq m$
defines 
a symplectic spread  in 
Example~\ref{ShadowNearFlag}
 (where $c_i=0$ means that we delete the field $F_i$
from the chain).  By Proposition~\ref{AutIso} we obtain at least 
$(2^{p_1}-1)(2^{p_1p_2}-1)\cdots (2^{p_1 \cdots p_m}-1)/p_1p_2\cdots  p_m$
pairwise inequivalent DHOs. 
Now use
Lemma~\ref{Estimate}.
\qed

%%%%%%%%%%%%%%%%%%%%%%%%%%%%%%%%%%%%
%%%%%%%%%%%%%%%%%%%%%%%%%%%%%%%%%%%%%%%%
\section{\bf A non-isomorphism theorem}
\label{NoIso}

In this section  we will prove:

\begin{theorem}
\label{NotIsomorphic}
Any \DHO from {\rm Example~\ref{ShadowSemifield}}
is not isomorphic to a \DHO from {\rm Example~\ref{ShadowNearFlag}}
having $h>0$.

\end{theorem}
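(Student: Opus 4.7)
The plan is to apply Theorem~\ref{IsoShadow}(b) to the inverse of a hypothetical equivalence $\Phi\colon{\bf D}\to\widetilde{\bf D}$, exploiting the additive closure of any spread-set from Example~\ref{ShadowSemifield} together with the failure of additive closure of the spread-set from Example~\ref{ShadowNearFlag} when $h>0$. Specifically, suppose such an equivalence $\Phi$ carries ${\bf D}$ (with additively closed semifield spread-set $\Sigma$) onto $\widetilde{\bf D}$ (with spread-set $\widetilde\Sigma$). Applying Theorem~\ref{IsoShadow}(b) to $\Phi^{-1}\colon\widetilde{\bf D}\to{\bf D}$, in which the {\em target} spread-set $\Sigma$ is additively closed, yields a permutation $a\mapsto\bar a$ of $U=F$ fixing~$0$ such that
\[
\widehat C(a):=\widetilde B(\bar a)+E_{a,a}=\widetilde C(\bar a)+E_{\bar a,\bar a}+E_{a,a}
\]
is the canonical labeling of an additively closed symplectic spread-set whose shadow is $\widetilde{\bf D}$. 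By Lemma~\ref{Labeling}(b) this $\widehat C$ must be $\FFF_2$-additive, and rewriting $\widehat C(a+a')=\widehat C(a)+\widehat C(a')$ yields, for every $a,a'\in U$,
\[
\widetilde C(\bar a)+\widetilde C(\bar a')+\widetilde C(\overline{a+a'})=\bigl(E_{\bar a,\bar a}+E_{\bar a',\bar a'}+E_{\overline{a+a'},\overline{a+a'}}\bigr)+\bigl(E_{a,a'}+E_{a',a}\bigr).
\]
Call this equation $(\dag)$.

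Next I would show that $\bar\cdot$ is $\FFF_2$-linear. Setting $\delta:=\bar a+\bar a'+\overline{a+a'}$ and iterating the bilinear defect identity
\[
\widetilde C(u+v)+\widetilde C(u)+\widetilde C(v)=\sum_{i=1}^m c_i\bigl(E^{(i)}_{u,v}+E^{(i)}_{v,u}\bigr),
\]
derived directly from the explicit formula for $\widetilde C$ in Example~\ref{ShadowNearFlag}, the left side of $(\dag)$ rewrites as $\widetilde C(\delta)$ plus a sum of $E^{(i)}$-type operators whose image is contained in the $\FFF_2$-subspace $F_m\langle\bar a,\bar a',\overline{a+a'}\rangle$. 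The operator $\widetilde C(\delta)$ has leading term $(1+\sum c_i)\delta^2{\bf 1}$, which is an {\em invertible} scalar multiplication whenever $\delta\neq 0$ (using the hypothesis $1+\sum c_i\neq 0$). The right side of $(\dag)$ has image confined to $\langle\bar a,\bar a',\overline{a+a'},a,a'\rangle_{\FFF_2}$. Reducing $(\dag)$ modulo the subspace $F_m\langle\bar a,\bar a',\overline{a+a'}\rangle+\langle a,a'\rangle_{\FFF_2}$ kills every term except the invertible multiplication $(1+\sum c_i)\delta^2 x$, which cannot vanish identically in $x$ unless $\delta=0$. Hence $\overline{a+a'}=\bar a+\bar a'$, so $\bar\cdot$ is additive.

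With $\bar\cdot$ additive, $(\dag)$ collapses to
\[
\sum_{i=1}^m c_i\bigl(E^{(i)}_{\bar a,\bar a'}+E^{(i)}_{\bar a',\bar a}\bigr)=E_{\bar a,\bar a'}+E_{\bar a',\bar a}+E_{a,a'}+E_{a',a},
\]
which I'll call $(\star)$. If $\bar\cdot\neq{\bf 1}$, pick $a,a'\in F^\star$ with $\{a,a',\bar a,\bar a'\}$ $\FFF_2$-linearly independent and $a,a'\notin F_m\bar a+F_m\bar a'$ (generically possible since that subspace has $\FFF_2$-dimension at most $2d_m<n$). The image of the left side of $(\star)$ is contained in $F_m\bar a+F_m\bar a'$, so the components ${\rm Tr}(a'x)\,a$ and ${\rm Tr}(ax)\,a'$ contributed by the right side must vanish; nondegeneracy of the trace then forces $a=a'=0$, a contradiction. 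Therefore $\bar\cdot={\bf 1}$, and $(\star)$ degenerates to $\sum_{i=1}^m c_i(E^{(i)}_{a,a'}+E^{(i)}_{a',a})=0$ for all $a,a'\in F$. Specializing $a'=1$ and reading off coefficients of the resulting linearized polynomial identity in $x$, the divisibility chain $d_1\mid d_2\mid\cdots\mid d_m$ makes the coefficient at $x^{2^{d_1}}$ isolate $c_1$, forcing $c_1=0$, after which induction on $i$ compels $c_i=0$ for every $i$, contradicting $h>0$.

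The main obstacle I foresee is the first deduction, namely forcing $\delta=0$. The projection argument gives only the rank bound $n\le 3d_m+2$, which is not an outright contradiction when $d_m$ attains its maximal value $n/p$ (with $p$ the smallest prime divisor of $n$), such as when $n=3k$ and $d_m=k$. Closing this gap will likely require exploiting the $\FFF_2$-linear independence of $\bar a,\bar a',\overline{a+a'}$ forced by $\delta\neq 0$, together with a finer accounting of cancellations among the $F_m$-coefficients of the $E^{(i)}$ correction terms on the left side of $(\dag)$, in order to handle the borderline cases.
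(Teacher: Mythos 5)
Your overall strategy is the right one --- apply Theorem~\ref{IsoShadow}(b) with the semifield spread-set in the role of the additively closed $\widetilde\Sigma$, so that $\widehat C(a)=\widetilde C(\bar a)+E_{\bar a,\bar a}+E_{a,a}$ must be additive, and then squeeze a contradiction out of that additivity. This is exactly how the paper begins. But the step you yourself flag is a genuine, unclosed gap, and it is the entire difficulty of the theorem. Your dimension count shows only that $\delta\neq 0$ forces $n\le 3d_m+2$, and since $d_m$ may equal $n/3$ (the chain $\underline d$ is allowed to end with $d_m=n/3$ whenever $3\mid n$), this is no contradiction at all; nothing in your sketch of a ``finer accounting of cancellations'' indicates how the borderline case would actually be resolved. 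In addition, your later step forcing $\bar\cdot=\mathbf 1$ from $(\star)$ by choosing $a,a'$ with $\{a,a',\bar a,\bar a'\}$ independent and $a,a'\notin F_m\bar a+F_m\bar a'$ is not always available (e.g.\ when $\bar\cdot$ is $F_m$-semilinear such a choice fails), so that step too would need repair. What survives intact is the reduction to the additivity of $\widehat C$ and the final ``all $c_i=0$'' computation.

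The paper closes the gap by an entirely different mechanism: rather than trying to prove additivity of $a\mapsto\bar a$ by an image/rank argument, it reads off the coefficients of $x^{2}$ and $x^{4}$ in the linearized polynomial $x\widehat C(a)$. Because every $E^{(i)}_{\,\cdot,\cdot}$ with $i\ge 1$ contributes only exponents $2^{jd_i}$ with $d_i\ge 3$, these two coefficients are exactly $\bar a^{3}+a^{3}$ and $\bar a^{5}+a^{5}$, and additivity of $\widehat C$ forces both to be additive functions of $a$. Lemma~\ref{Technical} --- a substantial polynomial computation that is the real content of the proof --- shows that these two conditions alone force $\bar a=a$ for all $a$, whence $\widehat C=\widetilde C$ would be additive, contradicting the fact (\cite{DM}) that the spread-set of Example~\ref{ShadowNearFlag} with some $c_j\ne 0$ is not additively closed. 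You would need either to reproduce an argument of this strength or to find another route around the $d_m=n/3$ case; as written, the proposal does not prove the theorem.
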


First we need a tedious computational result:  
%%%%%%%%%%%%%%%%%%%%%%%
 \begin{lemma}
 \label{Technical}
   For $F=\FFF_{2^n}$ $(n\geq 5$ odd$),$
   let $f\col F\to F $   be such that   $f(x)^3+x^3$ 
 and $f(x)^5+x^5$  are additive. 
 Then $f=1$.
 
  \end{lemma}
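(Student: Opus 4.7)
The plan is to show that the additive functions $L_3(x):=f(x)^3+x^3$ and $L_5(x):=f(x)^5+x^5$ both vanish identically; then $f(x)^3=x^3$ and, since $n$ is odd so cubing is a bijection on $F$, $f(x)=x$. The preliminary observation $f(0)=0$ follows from $L_3(0)=0$ together with the bijectivity of cubing.

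The essential input is the identity $f(x)^{15}=(f(x)^3)^5=(f(x)^5)^3$, which gives
\[
(x^3+L_3(x))^5\equiv(x^5+L_5(x))^3\pmod{x^{2^n}-x}.
\]
Expanding via the characteristic-$2$ binomial formulas $(A+B)^3=A^3+A^2B+AB^2+B^3$ and $(A+B)^5=A^5+A^4B+AB^4+B^5$ (both from Lucas' theorem), and cancelling $x^{15}$, yields
\[
x^{12}L_3+x^3L_3^4+L_3^5\equiv x^{10}L_5+x^5L_5^2+L_5^3\pmod{x^{2^n}-x}. \qquad(\dagger)
\]

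Write $L_3(x)=\sum_{i=0}^{n-1}a_ix^{2^i}$ and $L_5(x)=\sum_{i=0}^{n-1}b_ix^{2^i}$ with $a_i,b_i\in F$, and expand both sides of $(\dagger)$ as explicit sums of monomials, using in particular the char-$2$ expansions
\[
L_3^5=\sum_i a_i^5 x^{5\cdot 2^i}+\sum_{i\ne j}a_i^4a_j x^{2^{i+2}+2^j}, \qquad L_5^3=\sum_i b_i^3 x^{3\cdot 2^i}+\sum_{i\ne j}b_i^2b_j x^{2^{i+1}+2^j}.
\]
Comparing coefficients of $x^e$ inductively will then peel off the $a_i,b_i$. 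For instance, the coefficient of $x^3$ is $b_0^3$ (forcing $b_0=0$); of $x^5$ is $a_0^5$ on the left and $b_0b_1^2=0$ on the right (forcing $a_0=0$); of $x^9$ is $b_1^2$ (forcing $b_1=0$); of $x^{11}$ is $a_1^4$ (forcing $a_1=0$); and in general the exponents $x^{5+2^{i+1}}$ and $x^{3+2^{i+2}}$ isolate $b_i$ and $a_i$ respectively, provided they stay below $2^n$ so that no wrap-around occurs. The hypothesis $n\geq 5$ gives just enough room to start the inductive descent.

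The main obstacle is the handful of top-index coefficients $a_i,b_i$ with $i$ near $n-1$ whose naive isolating exponents $3+2^{i+2}$ and $5+2^{i+1}$ exceed $2^n-1$ and therefore wrap around under reduction modulo $x^{2^n}-x$. These must be treated by a separate finite analysis: once the lower-index coefficients have been shown to vanish, only a few unknowns survive in $L_3$ and $L_5$, and substituting back into $(\dagger)$ and reducing modulo $x^{2^n}-x$ produces an identity whose surviving low-degree monomials pin down the remaining coefficients (e.g.\ for $n=5$, substituting $L_3=a_3x^8+a_4x^{16}$ and $L_5=b_4x^{16}$ and comparing coefficients of $x^4,x^5,x^6$ yields $a_3=a_4=b_4=0$). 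With $L_3=L_5=0$ we obtain $f(x)=x$, that is, $f=1$.
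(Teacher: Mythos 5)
Your overall strategy is the same as the paper's: exploit $(f(x)^3)^5=(f(x)^5)^3$ to get the identity $x^{12}L_3+x^3L_3^4+L_3^5\equiv x^{10}L_5+x^5L_5^2+L_5^3$, write $L_3,L_5$ as linearized polynomials, and compare coefficients. The gap is in the execution of the coefficient comparison, and it occurs exactly where the real difficulty of the lemma lies. When you reduce $L_3^4=\sum_i a_i^4x^{2^{i+2}}$ and $L_5^2=\sum_i b_i^2x^{2^{i+1}}$ modulo $x^{2^n}-x$, the indices shift \emph{cyclically} mod $n$, so $L_3^4\equiv\sum_j a_{j-2}^4x^{2^j}$ and $L_5^2\equiv\sum_j b_{j-1}^2x^{2^j}$ with $j-2$, $j-1$ read mod $n$. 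Consequently the products $L_3\cdot L_3^4$ and $L_5\cdot L_5^2$ dump the top-index coefficients into the low-degree monomials. For instance, the coefficient of $x^3=x^{2^0+2^1}$ is $a_0a_{n-1}^4+a_1a_{n-2}^4$ on the left and $b_0^3+b_1b_{n-1}^2$ on the right, not $0=b_0^3$; the coefficient of $x^5$ is $a_{n-1}^4+a_0^5+a_2a_{n-2}^4$ versus $b_0b_1^2+b_2b_{n-1}^2$; the coefficient of $x^{11}$ gives $a_1^4=b_0$, not $a_1^4=0$. So none of your base cases $b_0=0$, $a_0=0$, $b_1=0$, $a_1=0$ actually follows, and the inductive descent never gets started.

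The clean isolation you describe is valid only for the exponents $2^i+3$ and $2^i+5$ with $4\le i\le n-1$ (these have three binary digits and receive a single contribution each), and it yields $a_2=\cdots=a_{n-3}=0$ and $b_3=\cdots=b_{n-2}=0$. What remains --- the eight coupled coefficients $a_0,a_1,a_{n-2},a_{n-1},b_0,b_1,b_2,b_{n-1}$ --- is the hard part, and the paper disposes of it by combining the relations $a_0^4=b_0^2$, $a_1^4=b_0$, $a_0=b_2^2$ from exponents $7,11,13$ with the relation $a_{n-2}^5=0$ from exponent $1+2^{n-2}$, three even-exponent relations (which give $a_{n-1}=0$ and $b_1b_0=0$), and a final two-case analysis; note also that one only needs $L_3=0$ at the end, not $L_5=0$. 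Your ``separate finite analysis'' for the top indices presupposes the lower-index coefficients have already been killed, which is precisely what has not been established. Also, a small point: for $n\ge5$ none of the \emph{exponents} wrap around (all are at most $2^{n-1}+12\le2^n-1$); the wrap-around that causes trouble is in the coefficient indices, as above.
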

 \begin{proof}
Let 
 $g(x):=f(x)^3+x^3$ and    $h(x):=f(x)^5-x^5$.
Since  
 $(f(x)^3)^5=(f(x)^5)^3$,  for all $x\in F$ we have
 \begin{equation}
\label{Quadratic}
x^{12}g(x)+x^3g(x)^4+g(x)^5  = x^{10}h(x)+x^5h(x)^2 +h(x)^3 .
\end{equation} 
 
 Write  $g(x)=\sum _{i=0}^{n-1}g_ix^{2^i}$
 and $h(x)=\sum _{i=0}^{n-1}h_ix^{2^i}$
 with  $g_i,h_i\in F$, where    indices will be  read   mod
$n$.
 Since $h(x)^2=\sum _{i=0}^{n-1}h_{i-1}^2x^{2^i}$
 and $g(x)^4=\sum _{i=0}^{n-1}g_{i-2}^4x^{2^i}$,
  the left side of (\ref{Quadratic})  has the form
$$
L(x)=\sum _{i=0}^{n-1}g_ix^{2^i+12}+ \sum _{i=0}^{n-1}g_{i-2}^4x^{2^i+3}
+g(x)^5
$$
and the right side has  the form
$$
R(x)= \sum _{i=0}^{n-1}h_ix^{2^i+10}
+\sum _{i=0}^{n-1}h_{i-1}^2x^{2^i +5}+h(x)^3.
$$
In order to view $L(x)=R(x) $ as a polynomial identity involving polynomials of degree $\le2^n-1$,   we note that the above  summations in $L(x)$ and $R(x)$ involve exponents $\le2^n-1$ (since $n\ge5$),
as  do the following (for all $x\in F$): 
\begin{eqnarray*}
g(x)^5\hspace{-6pt} &=\hspace{-6pt} &\left( \sum _{i=0}^{n-1}g_ix^{2^i} \right)
\left( \sum _{i=0}^{n-1}g_{i-2}^4x^{2^i} \right)\\
\hspace{-6pt} &=\hspace{-6pt} &
\sum _{0\le i<k \le n-1}(g_ig_{k-2}^4 +g_kg_{i-2}^4)x^{2^i+2^k}
+\sum _{i=0}^{n-2}g_ig_{i-2}^4x^{2^{i+1}}    +g_{n-1}g_{n-3}^4x
\\
h(x)^3\hspace{-6pt} &=\hspace{-6pt} &\left( \sum _{i=0}^{n-1}h_ix^{2^i} \right)
\left(\sum _{i=0}^{n-1}h_{i-1}^2x^{2^i} \right)\\
\hspace{-6pt} &=\hspace{-6pt} &
\sum _{0\le i<k \le n-1}(h_ih_{k-1}^2 +h_kh_{i-1}^2)x^{2^i+2^k}
+\sum _{i=0}^{n-2}h_ih_{i-1}^2x^{2^{i+1}}    +h_{n-1}h_{n-2}^2x.
\end{eqnarray*}

Denote by $L_o(x)$ and $R_o(x)$ the sums over the terms
with  odd  exponents
 in $L(x)$
and $R(x)$, respectively.  These    involve the following  exponents:
$$\begin{array}{llllll}
L_o(x) &2^0+12  \ &2^i+3 \  (i>0)&  \hspace{4pt}1 \hspace{4pt}  &  2^0+2^k \ (k>0)
\\
R_o(x) &2^0+10 \ &2^i+5 \  (i>0)& \hspace{4pt} 1  \hspace{4pt} &  2^0+2^k \  (k>0)
\end{array}
$$
We   rewrite $L_o(x)$ and $R_o(x)$ so that all coinciding exponents are visible: 
\begin{eqnarray*}
L_o(x)\hspace{-6pt} &=\hspace{-6pt} &g_{n-1}g_{n-3}^4x+ (g_{-1}^4+ g_{0}^5 +g_2g_{-2}^4)x^5+
g_0x^{13}
 \\
& &
\hspace{18pt}+ [g_{0}^4x^7 +g_1^4x^{11}] + (g_0g_{1}^4 +g_3g_{-2}^4)x^9  \\ 
& &  
+  \sum _{i\geq 4} g_{i-2}^4x^{2^i+3}
+ \sum _{\substack{0<k\le n-1 \\ k\neq 2 ,3}} 
(g_0g_{k-2}^4 +g_kg_{-2}^4)x^{1+2^k} 
\\R_o(x)\hspace{-6pt} &=\hspace{-6pt} &
h_{n-1}h_{n-2}^2x  + (h_{1}^2+ h_0h_{2}^2 +h_3h_{-1}^2)x^9+
h_0x^{11}
 \\
& &
\hspace{18pt}+[ h_0 ^2 x^{7} + h_2 ^2x^{13} ]+  (h_0h_{1}^2 +h_2 h_{ -1}^2)x^5  \\
& &
+  \sum _{i\geq 4}h_{i-1}^2x^{2^i+5} +
\sum _{\substack{0<k\le n-1 \\ k\neq 2,3}}
 (h_0h_{k-1}^2 +h_k h_{-1}^2)x^{1+2^k}.
\end{eqnarray*}
 %\vspace{-2pt}
  Comparing
  the coefficients of $L_o(x)=R_o(x)$, 
we obtain  the following table containing some of the relations  among the various $g_i$ and $h_i$.

\medskip\medskip

   \centerline{
		\begin{tabular}{| c | c | c |}
		\hline
		  Equation  & Exponent $\ell $ of $x^\ell $ & Restrictions \\
		 \hline   
	%	 $g_{n-1}g_{n-3}^4= h_{n-1}h_{n-2}^2$ & $1$ &   \\ 
	 %	$  g_{ -1}^4+(g_0^5 +g_2g_{-2}^4)=h_0h_1^2+h_2 h_{-1}^2 $	& $5$ &\\ 
	 	 $g_0^4=h_0^2$ & $7$ &\\
	 	%	$  g_0g_1^4 +g_3g_{-2}^4=h_1^2+ (h_0h_2^2+h_2 h_{-1}^2)$	& $9$  &\\ 
	 		 $g_1^4=h_0$ & $11$ &\\
	 		  $g_0=h_2^2 $& $13$ &\\ 
	 		\hspace{.5pt}  $g_0g_{k-2}^4+g_kg_{ -2}^4 =h_0h_{k-1}^2+h_kh_{-1}^2$ & \hspace{8pt}$1+2^k$&            $0<k\neq 2,3$\\
	 	\hspace{-17pt}	  $g_{i-2}^4=0$ & $2^i+3$ & $i\ge 4$\\ 
	 	\hspace{10pt}	  $0=h_{i-1}^2$ & $2^i+5$ & $i\ge 4$\\
	 	\hline
		\end{tabular} }
	
		\medskip
 \medskip

Since $i,k\le n-1$,
  the last two equations show that only $g_0,g_1,g_{n-2},g_{n-1}$ and
 $h_0,h_1,h_2,$ $h_{n-1}$  might be nonzero.
  Moreover, 
 \begin{equation}
 \label{easy eqs}
\mbox{$ g_0^4=h_0^2, $ \  $g_1^4=h_0$ \ and \ $g_0=h_2^2.$}
  \end{equation}
 
The exponent  $1+2 ^k$, $k=n-2,$ yields $0+g_{n-2}^5=0+0$.

We need three even exponent terms in the equation $L(x)=R(x)$:
$$
\begin{array}{lllll}
g_{n-1}x^{2^{n-1}+12}=0
\\
%g_1 x^{2^1+12} =  h_2 x^{2^2+10}
g_1g_{1-2}^4x^{2^{1+1}}  = h_1h_{1-1}^2 x^{2^{1+1}}
  % (g_1g_{2-2}^4+g_2g_{1-2}^4)x^{2^1+2^2}=(h_1h_{2-1}^2+h_2h_{2-2}^2)x^{2^1+2^2}
 \\
(g_1g_{3-2}^4+0)  x^{2^1+2^3} =(h_1h_{3-1}^2+0) x^{2^1+2^3}.
% \\
% (g_2g_{3-2}^4+0)x^{2^2+2^3}= h_1x^{2^1+10} +(h_2h_{3-1}^2+0)x^{2^2+2^3}
\end{array}
$$
Then  $g_{-1}=g_{n-1}=0$, so that  $h_1 h_0=0$ by the second equation.

If $h_0=0$ then  $g_0=g_1=0$ by (\ref{easy eqs}).
If $h_1=0$ then $g_1=0$ by  the third equation, and then
$h_0=g_0=0$ by  (\ref{easy eqs}). 

Thus, $g(x)=0$ and $f(x)^3 = x^3$.  Since $n$ is odd, we obtain  $f(x)=x$, as desired.
 \end{proof}

%%%%%%%%%%%%%%%%%%%%%%%

 \noindent{\em Proof of {\rm{Theorem~\ref{NotIsomorphic}}:}}
Assume that a DHO from Example~\ref{ShadowSemifield} 
 is isomorphic to a DHO from  Example~\ref{ShadowNearFlag}.
 Let $C(a)$  be as in 
  Example~\ref{ShadowNearFlag} with spread-set
 $\Sigma  $   and shadow 
$\{ B(a)=C(a)+E_{a,a}
 \,  | \,   a\in U\}$.  By  Theorem~\ref{IsoShadow}(b),
  there is  a permutation $a\mapsto a'$ of $U$     such that 
  $0'=0$ and $\widehat{C} (a) = B(a')+E_{a,a}$
 is the canonical labeling of an additively closed spread-set.
   
 Then 
 $$
 \widehat{C} (a) =C(a') +E_{a,a} +E_{a',a'},
 $$
  where 
  $
  C(a')= (1+\sum _{i=1}^{m} c_i)a'^2{\bf 1}+ \sum _{i=1}^{m}     c_iE_{a',a'}^{(i)} 
 $
 by (\ref{DHO C}).
 Write $x\widehat{C} (a) =\sum _{i=0}^{n-1} u_i(a)x^{2^i}$ with
 each  $u_i\col F\to F$ additive (since $\widehat{C}$
   is), $u_1(a)= a'^3+ a^3$ and $u_2(a)= a'^5+a^5$ since $m \ge1$.
 The additivity of   $u_1$ and $u_2$  yield the hypotheses of 
  Lemma~\ref{Technical}.  Thus,  $a'=a$ for all $a\in U$,
so that  $\widehat{C}=C$.  
In
Example~\ref{ShadowNearFlag} we assumed that
 some
$c_j \ne0$ 
 (thereby excluding the
desarguesian spread).
By  \cite[Lemma 4.7]{DM}
 it follows that
 $\Sigma$ is not additively closed,
a contradiction.  
\qed

%%%%%%%%%%%%%%%%%%%%%
%%%%%%%%%%%%%%%%%%%%%%%

\section{$q$DHOs}
\label{qDHOs}

Theorem~\ref{ProjectionDHO} used orthogonal spreads over $\FFF_2$   to obtain DHOs.  This suggests the question:  what happens if larger fields are allowed.  This 
then  motivates
  the following in all characteristics:

\begin{definition}  
A set {\bf D}   of  $ n$-spaces in  a finite 
vector space
over  $\FFF_q$  is a {\rm$q$DHO \em of rank $n$}
 if the following hold:
\begin{itemize}
\item[\rm(a)]
$\dim (X_1 \cap X_2) = 1$ for all distinct $X_1,X_2 \in {\bf D},$   
\item[\rm(b)]
Each point of  a member of ${\bf D} $  lies in   precisely $q$ members of 
{\bf D}$,$ and  
\item[\rm(c)]
{\bf D}  spans the underlying vector space.
\end{itemize}

 \end{definition}

A 2DHO is just a DHO.  Note that
  $ |{\bf D}|= q^n$ (fix $Y\in {\bf D}$ and count the pairs $(P,X)$
  with $P$ a point of $X\in {\bf D}-\{Y\}$), 
  and the number of nonzero vectors  in 
$\bigcup_{X\in {\bf D}} X$
is $ |{\bf D}|  (q^n-1)/q= 
q^{n-1}(q^n-1)$.

There is a sharp division for DHOs
between even and odd characteristic:
 for any even $q$  and any $n>1$ there are known DHOs over $\FFF_q$ of rank $n$,
but  no DHO  has yet been found in odd characteristic.
We will provide several types of examples showing that
this division disappears for $q$DHOs.

\begin{example}
\label{DualPlane} 
It is easy to see that a $q$DHO of rank 2 is the dual
of the affine plane $AG(2,q)$.

\end{example}

The next example is the analogue of a standard construction of
DHOs over $\FFF_2$ (see \cite[Ex.~1.2(a)]{DF}). 

\begin{example}
\label{QuotientSpread}
For a spread ${\bf S} $   of $W=V(2n,q)$ for  $n>2$ and
 any prime power $q$,  
let $P$ be a point of  $Y\in {\bf S}$. 
Then it is straightforward to check that  
${\bf S} /P :=\{ \langle X,P\rangle /P \,  | \, X\in {\bf S} -\{ Y\}\}$
 is a $q$DHO of rank $n$ in $W/P$.  

\end{example}

\begin{example} 
[Compare Huybrechts \cite{Huy}]
\label{Huybrechts}
Let
 $V=V(n,q)$ and $W=V\oplus (V\wedge V ) $ for any prime power $q$.  Then
$$
\mbox{${\bf D}:= \{{ X(t)\, |\, t \in V}\},$ \ where  \  $X(t) := \{ (x, x\wedge t ) 
\,| \,  x\in V\}$},
$$
{\em is a} $q$DHO  {\em of rank $n$. } 
For distinct $s,t\in V$,  $(x,x\wedge s)=(x,x\wedge t)$ iff
$x\wedge (s-t)=0$. Thus $X(s)\cap X(t) =\{ (x,x\wedge s) | x\in \langle s-t\rangle \}$
is $1$-dimensional, and (a) follows. Also $\langle s-t\rangle =\langle s-t'\rangle $
implies that $t' \equiv at\pmod{\langle s\rangle}$ for some $a\in \FFF_q$,
and (b) follows.
Clearly  (c) holds.
 
\end{example}  

\begin{example}
\label{QuotientqDHO}
Let ${\bf D}$ be a $q$DHO of rank $n$ in $V=V(m,q)$.
Let $U$ be a subspace of $V$ such that $U\cap (X+Y)=0$
for all $X,Y\in {\bf D}$.  
Then 
${\bf D}/U:=\{ \langle X,U\rangle/U \, | \, X\in {\bf D}\}$
is a $q$DHO of rank $n$ in $V/U$,
using the   proof in \cite[Prop. 3.8]{Yo2}.

\end{example}

\begin{example} [\emph{Orthogonal} $q$DHOs] 
\label{OrthogonalqDHO}
In
 order to  generalize Theorem~\ref{ProjectionDHO},
let {\bf O}  be an orthogonal spread in $V^+(2n+2,q)$ and let $P$ be a point of   $Y\in {\bf O},$ so that 
$V :=P^\perp /P\simeq V^+ (2n,q)$.
Then  
$$
{\bf O}/P:=\big \{ 
\overline X:= \langle X\cap P ^\perp ,P  \rangle/P  \,| \,  X\in {\bf O}-\{Y\}\big\}
$$
is a {\rm $q$DHO} in $V,$  and $V=\overline X\oplus (Y/P) $ for each 
$X\in {\bf O}-\{Y\}.$
This is proved as in Section~\ref{Orthogonal DHOs}.

There are   orthogonal spreads {\bf O}  known  in $V^+(2n+2,q)$
for any odd $n>1$   whenever $q$ is a power of 2, and for  $n=3$ 
 and various odd $q$ \cite{Ka1.6, CKW, Moo} (obtained
 from ovoids via the triality map).
\end{example}  

\begin{remark}
Many of the known and better understood DHOs over $\FFF_2$ are bilinear \cite{DE} (roughly speaking, bilinear DHOs can be represented by
additively closed
 DHO-sets). Examples are   the
$2$DHOs in Example~\ref{QuotientSpread} if ${\bf S}$
is a semifield spread, the $2$DHOs in Example~\ref{Huybrechts},
and  the DHOs  in Example~\ref{transitive DHOs}.
It does not seem possible to give a useful definition for  bilinearity of  DHOs   using
  $\FFF_q$, $q>2$.
However,  our examples show that the notion of bilinearity
can be generalized to $q$DHOs for any $q$ in an obvious fashion
(i.\,e., by introducing the notion of 
``additively closed $q$DHO-sets'').

\end{remark}

\begin{remarks}  [{\emph{Analogues of previous results}}]
\label{Analogues of previous results}
Our main results have natural analogues for $q$DHOs.  

\begin{itemize}
\item[\rm(a)]
Proposition~\ref{Basic}(b) holds: we already know 
$|\bigcup_{X\in {\bf D}} X|$,
so that 
$S_V=\bigcup _{X\in {\bf D}} X \cup Y$
is the set of  {all} singular vectors in $V$.

\item[\rm(b)]
Proposition~\ref{Focus} holds when $q>2$:
$\Phi$ leaves $S_V-Y$ invariant, and then
 $\Phi$  also leaves $Y$ invariant
  as in Proposition~\ref{Focus} (though this time, since $q>2$
we can use 2-spaces that  contain exactly $q$ points of 
$S_V$ not in $Y$).

\item[\rm(c)]
The results in Sections~\ref{Coordinates and  symplectic spread-sets}-\ref{Sec 5} go through with at most minor changes.  For example, 
Theorem~\ref{ShadowSemifieldKW} becomes:
\emph{for even $q$ and odd  composite $n$  there are more than
$q^{n(\rho (n)-2)}/n^2$ pairwise inequivalent orthogonal
{\rm $q$DHO}s  in $V^+(2n,q)$  that arise from symplectic semifield spreads.}

\end{itemize}
\end{remarks}

\begin{remark}
Any two members of a  $q$DHO {\bf D}    meet in a point that lies in 
exactly $q$ members of {\bf D}.  Therefore, there is an associated design 
with $v=|{\bf D}   |=q^n$  ``points'', $k=q$ ``points'' per block, and 
exactly one   block containing any given pair of  ``points'';
 these are the same parameters as the design of points and lines of $AG(n,q)$.  It would be interesting to know whether  these designs are  ever 
isomorphic when $q>2$. 
\end{remark}

%%%%%%%%%%%%%%%%
%%%%%%%%%%%%%%%%%%%%%
\section
{Concluding  remarks} 
\label{Concluding  remarks} \
(a)  Let $n$ be odd  and $1\leq r <n$  with $(n,r)=1$. Set
$F=\FFF _{2^n}$, $V=F\oplus  F$, and  as usual 
turn $V$
into a quadratic
$\FFF_2$-space  using  $Q(x,y)={\rm Tr}(xy)$.
For $a\in F$ define the operator $B(a)$ on $F$
by 
$$
xB(a)= ax^{2^r}+ (ax)^{2^{n-r}}.
$$
By \cite{Yo1},  $\{ B(a)\,| \, a\in F\}$
is a DHO-set of skew-symmetric operators defining an
orthogonal DHO ${\bf D}_{n,r}$.
Moreover, 
$|{\rm Aut}({\bf D}_{n,r})|=2^n(2^n-1)n$  \cite{Yo1,TY}.
Thus, by Example~\ref{Connection},
${\bf D}_{5,1}$  and ${\bf D}_{5,2}$ are not  
projections of   orthogonal spreads, and it  seems likely that
the same is true for all   ${\bf D}_{n,r}$, $n\geq 5$.
 
 \medskip 
 (b) There are few  papers 
 explicitly dealing with   the number of DHOs
 of a given rank \cite{BF, Ta1,TY,TY1, Yo1,Yo}. 
 For example,  \cite{Yo1,TY} obtained  
  approximately $c  d^2 $ non-isomorphic DHOs
  of rank $d$ over $\FFF_2$
    for some constant $c$. 
  However, many more may be known, but the isomorphism problems
  are open.  
  For example, the quotient construction of Example~\ref{QuotientSpread}
  associates to each spread ${\bf S}$   and each
  point $P$ of $V(2n,2)$ a DHO ${\bf S}/P$ in $V(2n,2)/P$.
  There are 
  very large numbers of non-isomorphic spreads and many points 
  $P$ to choose,  so that  the number of
  DHOs of this type probably explodes for large $n$.
  Unfortunately, as is the case for the DHOs arising from Theorem~\ref{ProjectionDHO}, 
  the isomorphism problem seems to be very difficult in general.

\medskip

(c) 
 For orthogonal spreads, in the situation of Definition~\ref{Orthogonal and symplectic spreads} isomorphisms 
  ${\bf O}/N \to {\bf O}'/N'$   between spreads    ``essentially'' lift to isomorphisms 
  ${\bf O} \to {\bf O}'$ sending $N\to N'$ \cite[Corollary~3.7]{Ka1}.
  We do not know if there is a corresponding general theorem of that sort for the DHOs in Theorem~\ref{ProjectionDHO}.
  The proof of Theorem~\ref{ShadowNearlyFlagKW}
 shows that such a lift occurs for isomorphisms among the DHOs appearing there.
  
  Theorem~\ref{ShadowSemifieldKW} is more interesting in this regard:  the proof shows that 
  isomorphisms  
  ${\bf O}/P \to {\bf O}'/P'$ among {\em those} DHOs lift to isomorphisms 
   ${\bf O} \to {\bf O}'$, but there does not seem to be any reason to expect that $P$ 
   must be sent to $P'$.
   
  It would be very interesting to have a theorem containing both Theorems~\ref{ShadowSemifieldKW} and 
  \ref{ShadowNearlyFlagKW} that involves such a lift of DHO-isomorphisms to orthogonal spread isomorphisms.

\medskip
 
 (d)  There are many more symplectic spreads known in $V$.
 Some cannot be described conveniently using spread-sets and yet have transitive automorphism groups and a precise determination of isomorphisms among the associated planes \cite{KW0}; others have trivial automorphism groups  \cite{Ka1.5}; and still others have not been examined at all.  The various associated DHOs seem even harder to study.
 
 Another family of examples arises  
 from symplectic semifields in a manner different from Section~\ref{Sec 4}:
 
 \begin{example}
 \label{transitive DHOs}
   Let $T\col  F\to \FFF_2$  and $\FFF_2\oplus F\oplus \FFF_2\oplus F$
  be as in Sections~\ref{Projections and lifts with coordinates} and
  \ref{Sec 4}, 
  with quadratic form $Q(\alpha, x, \beta, y)=\alpha\beta +T(xy)$.  Let $(F,+,*)$ be a symplectic semfield using $F$,
  such as one in Example~\ref{ShadowSemifield} given by $x*a=xC(a)$.
  Then \cite[Lemma 2.18]{KW} contains   an orthogonal spread ${\bf O}:=\{{\bf O}[s]  \,| \, s\in F  \}\cup \{{\bf O}[\infty]  \}$,
  with 
\begin{equation*}   
  \begin{split} 
\label{the orthogonal spread}
 {\bf O}[\infty] & =0\oplus0\oplus \FFF_2 \oplus  F  
\\
 {\bf O}[s] &=\big\{\big(\alpha ,x,T( xs),x*s+s (\alpha+T( xs))
 \big)  \,  \big| \,   \alpha\in \FFF_2, x\in F\big\} ,
\end{split} 
\end{equation*}  
admitting the transitive elementary abelian group consisting of the operators
$
(\alpha,x,\beta,y ) \mapsto 
(\alpha+T(x t ),   x,   \beta+T(xt), 
y+x*t+(\alpha+\beta)t),$  $t\in F$.

If $\mu\in F$ and $P_\mu:=\langle (0,0,0,\mu)\!\>\rangle $, then   
  Theorem~\ref{ProjectionDHO}  produces a  {\rm DHO}~
\emph{${\bf O}/P_\mu$  in $P_\mu^\perp/P_\mu$  
admitting a transitive elementary abelian group} induced by the above operators.

The number of DHOs obtained this way is the number of symplectic semifields of order $2^n$ multiplied by  $|F|=2^n$.  
 We conjecture that the number of pairwise inequivalent DHOs obtained is greater than the number of pairwise non-isotopic presemifields used.
  \end{example}

%\medskip 

 (e) Orthogonal 
 DHOs (and spreads) are implicitly used 
  in \cite[Thm. 2]{CHRSS}
   to construct  
 Grassmannian packings.
  
  %%%%%%%%%%%%%%%%%%%%%%%%%%%%%%%%%%%%%

\smallskip 

{\obeylines \small
Ulrich Dempwolff,   Department of Mathematics, Universit\"at Kaiserslautern, 67653 Kaiserslautern, Germany
e-mail: dempwolff@mathematik.uni-kl.de
\smallskip\smallskip

William M. Kantor, Department of Mathematics, University of Oregon, Eugene, OR 97403, and  College of Computer and Information Science, Northeastern University, Boston, MA 02115
email: kantor@uoregon.edu
}

\end{document}